\newtheorem{thm}{Theorem}[section]
\newtheorem{lem}[thm]{Lemma}
\newtheorem{prop}[thm]{Proposition}
\newtheorem{cor}[thm]{Corollary}
\newtheorem{defn}[thm]{Definition}
\newtheorem{assum}[thm]{Assumption}
\theoremstyle{definition}
\newtheorem{eg}[thm]{Example}
\newtheorem{obs}[thm]{Remark}
\newcommand{\tnum}{\rm(\roman*)}
\newcommand{\rnum}{\rm(\alph*)}
\newcommand{\nnum}{\rm(\arabic*)}
\newcommand{\im}{\mathrm{i}}
\newcommand{\ex}{\mathrm{e}}
\newcommand{\succup}{%
      \rotatebox{90}{$\succ$}}
\newcommand{\succdn}{%
      \rotatebox{-90}{$\succ$}}
\numberwithin{equation}{section}
\def\k0{\kappa_0}
\def\bfg{{\bf{g}}}
\def\bfv{{\bf{v}}}
\def\bfw{{\bf{w}}}
\def\tbx{\tilde{\bx}}
\def\tt{\tilde{t}}
\def\bu{{\bf{u}}}
\def\bF{{\bf{f}}}
\def\bx{{\bf{x}}}
\def\b0{{\bf{0}}}
\def\Bs{B_{\rm s}}
\newcommand{\hilite}[1]{{\color{blue} #1}}
\renewcommand{\hilite}[1]{#1} %Leaving this on gives black, turning this off gives blue 
\newcommand{\alert}[1]{{\color{red} #1}}
\begin{document}
\title[Steady states of the NSE with large Grashof numbers]
{Intrinsic expansions in large Grashof numbers for the steady states of the {N}avier--{S}tokes equations}

\author[L. Hoang]{Luan Hoang$^{1}$}
\address{$^1$Department of Mathematics and Statistics,
Texas Tech University\\
1108 Memorial Circle, Lubbock, TX 79409--1042, U. S. A.}
%\email[L. Hoang]{luan.hoang@ttu.edu}
\email{luan.hoang@ttu.edu}

\author[M. S. Jolly]{Michael S. Jolly$^{2,*}$}
\address{$^2$Department of Mathematics\\
Indiana University\\ Bloomington, IN 47405, U. S. A.}
%\email[M. S. Jolly]{msjolly@indiana.edu}
\email{msjolly@indiana.edu}
\thanks{$^*$Corresponding author.}

\date{\today}
\subjclass[2020]{35Q30, 35C20, 76D05}
%35Q30: NSE 
%76F02: Turbulence
%35C20: asymptotic expansion for solutions of PDE
%76D05  Navier-Stokes equations for incompressible viscous fluids
\keywords{Navier--Stokes equations, large Grashof number,  expansion, steady states, nested spaces}
\thanks{This work was supported in part by Simons Foundation Grant MP-TSM-00002337.}
\begin{abstract}
We enable a theory of intrinsic asymptotic expansions for the steady state solutions of the full Navier--Stokes equations. Such a theory was first developed in \cite{FHJ} for Galerkin approximations. To overcome the lack of local compactness in infinite dimensional spaces, we introduce the notion of asymptotic expansions in nested spaces. When the inclusion maps between these spaces are compact, we establish the existence of such an asymptotic expansion for a subsequence of any bounded sequence. This consequently yields an intrinsic asymptotic expansion in a single normed space.
We apply this result to the steady states of the Navier--Stokes equations by utilizing the spectral fractional Sobolev spaces. In the case of the two-dimensional periodic boundary conditions, more properties relating the terms of the asymptotic expansion are obtained.  
\end{abstract}

\maketitle

%\vfill
%\eject
\tableofcontents

%\vfill
%\eject

%%%%%%%%%%%%

\section{Introduction} \label{intro}

Kolmogorov's theory of homogeneous turbulence is most pronounced for small viscosity, large domain size and large body forces \cite{Frisch,FMRTbook}.  
\hilite{All three situations can be realized by taking the Grashof number $G$, which combines the three parameters (see \eqref{Grashof}), large. When the fluid is modeled by the Navier--Stokes equations (NSE),  the universal laws of turbulent flow emerge when physical quantities are averaged over time. As $G$ is increased, the long time dynamics of the NSE, embodied in the global attractor $\mathcal{A}$ \cite{T97}, can become very complicated.  This is typically the result of bifurcations from steady states, which remain part of the global attractor, even after becoming unstable. Such steady states can be viewed as approximations of time averaged unsteady solutions, for instance after a Hopf bifuration. With this connection to turbulence in mind, we study in this paper sequences of steady state solutions $(v_n)_{n=1}^\infty$ of  the NSE corresponding to Grashof numbers $G_n\to \infty$. }
Our results are quite different from the study of some generic properties of the set of steady state solutions in \cite{FT77,FSequil83,SaTe80}.

In our previous work \cite{FHJ} with C. Foias, we introduced a theory of asymptotic expansions for steady state solutions of the Galerkin approximation of the NSE as the Grashof number goes to infinity.
The key notion in \cite{FHJ} was a new type of asymptotic expansions for sequences in the form of a family of summations  \hilite{which are either finite sums or formal infinite series}. 
More explicitly, for a sequence $(v_n)_{n=1}^\infty$ in a normed space $(Z,\|\cdot\|_Z)$, the asymptotic expansion is
\begin{equation}\label{vxfirst}
v_n=v+\Gamma_{1,n}w_1+\Gamma_{2,n}w_2+\cdots+\Gamma_{k,n}w_k+\cdots,
\end{equation}
where all the $w_k$ are unit vectors,
the $\Gamma_{k,n}$ are positive numbers converging to zero as $n\to\infty$ and with $\Gamma_{k+1,n}$ decaying faster than $\Gamma_{k,n}$ as $n\to\infty$.
 \hilite{The asymptotic expansion \eqref{vxfirst} means that for each $k\ge 1$, the sequence $(v_n)_{n=1}^\infty$ can be approximated, as $n\to\infty$, by the sequence of partial sums
$\left(v+\sum_{j=1}^k \Gamma_{j,n}w_j\right)_{n=1}^\infty$ with which the sequence of remainders can be estimated by
$$
\lim_{n\to\infty} \frac{\left\| v_n-\left(v+\sum_{j=1}^k \Gamma_{j,n}w_j\right)\right\|_Z}{\Gamma_{k,n}}=0, 
$$
that is, the remainders go to zero, as $n\to\infty$, faster than each term in the approximate sum. In case the right-hand side of \eqref{vxfirst} is a series $v+\sum_{k=1}^\infty \Gamma_{j,n}w_k$, its summation in $k$ is \emph{not} necessarily convergent.
}
This expansion can be seen as intrinsic because its basis elements $w_k$ depend on the sequence itself. 
In that work the finite dimensional nature of the Galerkin approximation was crucial for the existence of such an asymptotic expansion.

In this paper, we develop that theory further for the steady states of the NSE itself. This is not obvious 
due to the lack of the relative compactness of bounded sets in infinite dimensional spaces.
Yet we \hilite{manage} to obtain similar results to those in \cite{FHJ}. The idea is to develop a form of asymptotic expansions in nested spaces with successive compact embeddings. The spectral  fractional Sobolev spaces based on  fractional powers of the Stokes operator play an important role in constructing such nested spaces. 
Even so, there are still differences compared to the Galerkin approximations in \cite{FHJ} and new kinds of degenerate expansions must be taken into account.

This paper is laid out so that the applications to the NSE are reached with minimal machinery; the proofs of all but the most fundamental expansion results are deferred to the Appendix.
In Section \ref{prelim}, we review the functional setting for the NSE, the Stokes operator, the bilinear mapping and other spectral  fractional Sobolev spaces.
We scale the NSE in order to put the Grashof number $G$ in front of the nonlinear terms. The advantage is the uniform boundedness of the steady states, in an appropriate norm, even when $G\to\infty$.
In Section \ref{nestedsec}, we introduce {\it strict expansions} in nested spaces in Definition \ref{uexp}. 
The exclusiveness of the three cases in this definition is verified in Proposition \ref{exclusive}.
The asymptotic uniqueness is proved in Proposition \ref{unique}.
The main result on the existence of a subsequence with such a strict expansion is Theorem \ref{mainlem}. 
Different variations of the strict expansions are introduced in Definition \ref{refinex}. 
In particular, we define {\it degenerate expansions} which can naturally arise in this infinite dimensional setting.
Theorem \ref{refinethm} shows that we can modify strict expansions to obtain a simpler form. These simpler forms still retain the meaning of asymptotic expansions but may lose their asymptotic uniqueness property.
An important consequence is Corollary \ref{maincor} which derives the asymptotic expansions for a single normed space. This result is key to our study of the  NSE. Example \ref{denexam} shows that a sequence can have both a unitary expansion and a degenerate expansion.
In Section \ref{results}, we apply the general results in Section \ref{nestedsec} to the steady states of the NSE. We also explore properties of the obtained asymptotic expansions.
The two-dimensional (2D) periodic case is studied in subsection \ref{2Dappln}. After an asymptotic expansion is obtained in Theorem \ref{2Dxp}, additional relations involving  expansion terms are given in Theorems \ref{thm1}, \ref{v0eg} and \ref{vAinvg}.
We treat the 3D case and 2D no-slip case in subsection \ref{3Dappln}. Although similar asymptotic expansions are obtained in Theorem \ref{3Dxp}, the limitations of the current estimates for the higher Sobolev norms of the solutions prevent further consideration in these cases.

%%%%%%%%%%%
\section{Preliminaries}\label{prelim}

The incompressible NSE for velocity $\bu(\bx,t)$, pressure $p(\bx,t)$ with a time-independent body force $\bF(\bx)$, for the spatial variables $\bx\in\mathbb R^d$ ($d=2,3$) and time variable $t\ge 0$, are
\begin{equation}\label{NSE}
\left\{ 
\begin{aligned}
&\frac{\partial \bu}{\partial t} -
\nu \Delta \bu  + (\bu\cdot\nabla)\bu + \nabla p = \bF , \\
&\text {div} \bu = 0.
\end{aligned}
\right.
\end{equation}
Below, we recall the standard functional setting for the NSE, see, e.g, \cite{CF88,TemamAMSbook,FMRTbook}.

\medskip
\noindent
\textit{Periodic boundary condition.} We consider the NSE with periodic boundary conditions in $\Omega =[0,L]^d$.
Let $\mathcal V$ denote the set of $\mathbb R^d$-valued $\Omega$-periodic divergence-free trigonometric polynomials with zero average over $\Omega$.
Define the spaces 
\begin{equation}\label{Hd1}
    \text{$H=$ closure of $\mathcal V$ in $L^2(\Omega)^d$ and $V=$ closure of $\mathcal V$ in $H^1(\Omega)^d$.}
\end{equation}

\medskip
\noindent\textit{No-slip boundary condition.} 
Consider the NSE in an open, connected, bounded set $\Omega$ in $\mathbb R^d$ with $C^2$-boundary $\partial \Omega$. Let $\mathbf n$ denote the outward normal vector on the boundary. 
Under the no-slip boundary condition 
$$\bu =0\text{ on }\partial \Omega ,$$
the relevant spaces are
\begin{equation}\label{Hd2}
H=\{\bu\in L^2(\Omega)^d:\ \nabla \cdot \bu=0, \quad \bu\cdot\mathbf n|_{\partial \Omega}=0\},\quad
V=\{\bu\in H_0^1(\Omega)^d:\ \nabla \cdot \bu=0 \}.
\end{equation}

\medskip
For both cases \eqref{Hd1} and \eqref{Hd2}, the inner product and its associated norm in $H$ are those of $L^2(\Omega)^d$ and are denoted by $\langle\cdot,\cdot\rangle$ and $|\cdot|$, respectively.
(We have also used $|\cdot|$ for the modulus of a vector in 
$\mathbb{C}^d$; we assume that the 
meaning will be clear from the context.)
The space $V$ is equipped with the following inner product and norm
\[
\langle\!\langle u,v \rangle\!\rangle =\int_{\Omega}\sum_{j=1}^{d} 
\frac{\partial}{\partial x_j}u(\bx)\cdot 
\frac{\partial}{\partial x_j}v(\bx)  {\rm d} \bx
\text{ and }
\|u\|=\langle\!\langle u,u \rangle\!\rangle^{1/2} \text{ for }u,v\in V.
\]
Note that the norm $\|\cdot\|$ in $V$ above is equivalent to the $H^1(\Omega)^d$-norm.

We will use the following embeddings  and identification for $H$, $V$ and their dual spaces $H'$, $V'$
\begin{equation}\label{emiden}
    V\subset H=H'\subset V'.
\end{equation}
For convenience, we define the product $\langle\cdot,\cdot\rangle_{V',V}$ on $V'\times V$ by   $$\langle w,v\rangle_{V',V}=w(v) \text{ for }w\in V', v\in V.$$
For $u\in H$ and $v\in V$, using \eqref{emiden}, we have $\langle u,v\rangle_{V',V}=\langle u,v\rangle$.

Define the bounded linear mapping $A:V\to V'$ and continuous bilinear mapping $B:V\times V\to V'$ by
\begin{equation}\label{ABdef}
    \langle Au,v\rangle_{V',V}=\langle\!\langle u,v \rangle\!\rangle, \quad 
\langle B(u,v),w\rangle_{V',V}=\int_\Omega ((u(\bx)\cdot \nabla )v(\bx))\cdot w(\bx) {\rm d} \bx,
\end{equation}
for $u,v,w\in V$.

Assume $\bF\in H$.
It is well-known that the NSE \eqref{NSE} can be written in the functional form 
\begin{equation} \label{fNSE}
\frac{{\rm d} u}{{\rm d} t}+\nu Au+B(u,u)=f \text{ in }V',
\end{equation}
with $u(t)=\mathbf u(\cdot,t)$ and $f=\mathbf f(\cdot)$ \cite{T97,FMRTbook}.
Further study of equation \eqref{fNSE} requires additional facts about the  mappings $A$ and $B$.
From \eqref{ABdef}, one has, for any $u\in V$, 
\begin{equation} \label{Auu}
\langle Au,u\rangle_{V',V}=\|u\|^2\text{ and }\|Au\|_{V'}=\|u\|.
\end{equation}
Recall the orthogonality relation of the bilinear operator $B$
(see for instance \cite{T97,FMRTbook})
\begin{equation}\label{BV}
\langle B(u,v),w\rangle_{V',V} = -\langle B(u,w),v\rangle_{V',V},
\text{ so that }  
\langle B(u,v),v\rangle_{V',V}=0\text{ for } u,v,w\in V.
\end{equation}

Let $\mathcal{P}$ be the Helmholtz-Leray projection onto $H$, 
that is, the orthogonal projection of $L^2(\Omega)^d$ onto $H$. 
Define the space $D(A)=V\cap H^2(\Omega)^d$ equipped with the norm $|Au|$, which, in fact, is equivalent to the $H^2(\Omega)^d$-norm.

The Stokes operator is the restriction of $A$ to  $D(A)$ which is a bounded linear operator from $D(A)$ onto $H$.
We have 
$$Au=-\mathcal P\Delta u \in H \text{ for } u\in D(A),$$
and 
$$\langle Au,v\rangle =\langle\!\langle u,v \rangle\!\rangle \text{ for } u\in D(A),v\in V.$$
As an unbounded linear operator on $H$, the Stokes operator  is positive, self-adjoint,  with a compact inverse, and its eigenvalues are positive numbers
$$\lambda_1\le \lambda_2\le \lambda_3\le \ldots \text{ satisfying } \lim_{n\to\infty}\lambda_n=\infty.$$
Moreover, there is an orthonormal basis $\{\varphi_n:n\in\mathbb N\}$ of $H$ for which $\varphi_n\in D(A)$ and $A\varphi_n=\lambda_n \varphi_n$ for any $n\in\mathbb N$.
Here, $\mathbb N=\{1,2,3\ldots\}$ is the set of natural numbers.
We denote 
$\kappa_0=\lambda_1^{1/2}$. 

The restriction of $B$ to $D(A)\times V$ is a continuous bilinear mapping from $D(A)\times V$ to $H$.
We have
$$
B(u,v)=\mathcal{P}\left( (u \cdot \nabla) v \right)\in H\text{ for } u\in D(A),v\in V,
$$
and, from \eqref{BV},
\begin{equation}\label{Bflip}
\langle B(u,v),w\rangle = -\langle B(u,w),v\rangle,\quad  \text{so that} \quad 
\langle B(u,v),v\rangle=0\text{ for } u,v,w\in D(A),
\end{equation}
Additionally, in the 2D periodic case, one has
\begin{equation}\label{BAzero}
\langle B(u,u),Au\rangle=0\text{ for } u\in D(A).
\end{equation}
For $u,v\in V$, we denote
$$\Bs(u,v)=B(u,v)+B(v,u).$$

Define the dimensionless Grashof number
\begin{equation}\label{Grashof}
G=\frac{1}{\nu^2\k0^{3-d/2}} \|\bF\|_{L^2(\Omega)^d}.
\end{equation}
We make the following standard change of variables, see e.g. \cite{FHJ},
\begin{equation}\label{changevar}
 \tbx = \k0 \bx,\quad \tt=\nu\k0^2 t,\quad 
 \mathbf v(\tbx,\tt)=\frac{\bu(\bx,t)}{\nu\k0 G},\quad 
q(\tbx,\tt)=\frac{p(\bx,t)}{\nu^2\k0^2G},\quad
\mathbf g(\tbx)=\frac{\bF(\bx)}{\nu^2\k0^3G}.
\end{equation}
Then $(\mathbf v,q)$ satisfies 
\begin{equation}\label{changesys}
\left\{ 
\begin{aligned}
&\frac{\partial \mathbf v}{\partial \tilde t} - \Delta_{\tbx} \mathbf v  + G (\mathbf v\cdot\nabla_{\tbx})\mathbf v + \nabla_{\tbx} q = \mathbf g,\\
&\text {div}_{\tbx} \mathbf v = 0,
\end{aligned}
\right.
\end{equation}
for
\begin{equation}\label{newdom} 
\tbx\in \tilde \Omega:=\kappa_0 \Omega,\quad \tt>0,
\end{equation}
and $\mathbf v$ is subject to the same \hilite{type of} boundary conditions as $\bu$.

Hereafter, we suppress the tildes in \eqref{changesys} and \eqref{newdom}.
As done for \eqref{fNSE}, we can rewrite \eqref{changesys} in the following functional form
\begin{equation}\label{sc}
\frac{{\rm d} v}{{\rm d} t}+Av+ GB(v,v) = g,
\end{equation}  
with $v(t)=\mathbf v(\cdot,t)$ and $g=\mathbf g(\cdot)$.
Note that the function $g$ in \eqref{sc} satisfies 
\begin{equation}\label{g1}
    \|g\|_{L^2(\Omega)^d}=1.
\end{equation} 
Moreover, the Stokes operator $A$ in \eqref{sc} has the first eigenvalue 
\begin{equation}\label{unitlam}
\lambda_1 = 1.    
\end{equation}

We now recall the fractional operators and spaces.
For $\alpha\ge 0$, define
$$ A^\alpha u =\sum_{n=1}^\infty \lambda_n^\alpha c_n \varphi_n \text{ for }
u=\sum_{n=1}^\infty c_n \varphi_n\in D(A^\alpha),$$
where 
$$ D(A^\alpha)=\left \{ u=\sum_{n=1}^\infty c_n \varphi_n\in H: \sum_{n=1}^\infty \lambda_n^{2\alpha} |c_n|^2<\infty\right\}$$ 
equipped with the norm 
$\|u\|_{D(A^\alpha)}=|A^\alpha u|.$
We have $D(A^0)=H$, $\|u\|_{D(A^0)}=|u|$, $A^0={\rm Id}_H$, $D(A^{1/2})=V$, $\|u\|_{D(A^{1/2})}=\|v\|=|A^{1/2}u|$ and $D(A^1)=D(A)$, $\|u\|_{D(A^1)}=|Au|$, $A^1=A$.
Thanks to \eqref{unitlam}, we have
$$\|u\|_{D(A^\alpha)}\ge \|u\|_{D(A^\beta)}  \text{ for }\alpha\ge \beta\ge 0, \ u\in D(A^\alpha),$$
which particularly implies 
\begin{equation}\label{Poincare}
|Au|\ge \|u\|  \text{ for  }u\in D(A),\quad  \|u\|\ge |u| \text{ for  }u\in V.
\end{equation}
Finally, for any numbers $\alpha>\beta\ge 0$,  the space $D(A^\alpha)$ is compactly embedded into $D(A^\beta)$  \cite[Section 2.2]{TemamSIAMbook}.

\section{Asymptotic expansions in nested spaces and a consequence}\label{nestedsec}

In this section, we introduce new types of asymptotic expansions in nested spaces.
We investigate their properties and derive a consequence that establishes an asymptotic expansion in a single normed space.

\subsection{Asymptotic expansions in nested spaces}\label{xnested}

\begin{defn}\label{uexp}
Let $(Z_k,\|\cdot\|_{Z_k})$ for $k\ge 0$ be normed spaces over $\mathbb K=\mathbb C$ or $\mathbb R$ such that
\begin{equation}\label{nested}
Z_0\subset Z_1 \subset Z_2 \subset \ldots \subset Z_k \subset Z_{k+1} \subset\ldots  
\end{equation}
with continuous embeddings. Denote $\mathcal Z=(Z_k)_{k=0}^\infty$.

    We say a sequence $(v_n)_{n=1}^\infty$ in $Z_0$  has a \emph{strict expansion} in $\mathcal Z$ if it satisfies one of the following three conditions.
\begin{enumerate}[label={\rm (\Roman*)}]
    \item\label{d1} There exists $v\in Z_0$ such that  $v_n=v$ for all $n\ge 1$.

    \item\label{d2} There exist $K\in \mathbb{N}$,  $v\in Z_0$,  $w_k\in Z_k$,  $w_n^{(k)}\in Z_{k-1}$ and $\Gamma_{k,n}>0$, for $n\ge 1$ and $1\le k\le K$, such that 
\begin{align}
\label{b0}
\lim_{n\to\infty} \Gamma_{1,n}&=0,\\
\label{b1}
\lim_{n\to\infty} \frac{\Gamma_{k+1,n}}{\Gamma_{k,n}}&=0 
\text{ for all   $1\le k< K$,}
\end{align}
\begin{align}
\label{b3}
\lim_{n\to\infty} \|w_n^{(k)}-w_k\|_{Z_k}&=0  \text{ for all $1\le k\le K$,}\\
\label{b2}
\|w_n^{(k)}\|_{Z_{k-1}}&=1  \text{ for all $n\ge 1$, $1\le k\le K$,}
\end{align}
\begin{equation}\label{b4}
    v_n=v+\sum_{j=1}^{k-1} \Gamma_{j,n}w_j+\Gamma_{k,n}w_n^{(k)} \text{ for all $n\ge 1$, $1\le k\le K$, }
\end{equation}
and
\begin{equation}\label{b6}
    w_n^{(K)}=w_{K} \text{ for all $n\ge 1$.}
\end{equation}

   \item\label{d3}  There exist $v\in Z_0$,  $w_k\in Z_k$, $w_n^{(k)}\in Z_{k-1}$, $N_k \in \mathbb{N}$  and $\Gamma_{k,n}> 0$ for all $n\ge 1$, $k\ge 1$  such that one has \eqref{b0}, while  the limits in  \eqref{b1} and \eqref{b3} hold for all $k\ge 1$, the equation in \eqref{b4} holds for all $n,k\ge 1$, and the unit vector condition in \eqref{b2} holds for all $k\ge 1$, $n\ge N_k$.
\end{enumerate}

We say the expansion in Case \ref{d1} is trivial, the expansions in Cases \ref{d1} and \ref{d2} are finite, and 
the expansion in Case \ref{d3} is infinite.
\end{defn}

We denote the strict expansion in Definition \ref{uexp} by  
\begin{equation}\label{vsum}
    v_n\approx v+\sum_{k\in\mathcal N} \Gamma_{k,n} w_k,
\end{equation}
where the set $\mathcal N$ is empty in Case \ref{d1}, is $\{1,2,\ldots,K\}$ in Case \ref{d2}, and is $\mathbb N$ in Case \ref{d3}.

\begin{obs}\label{remscl}
Assume the strict expansion \eqref{vsum}.
\begin{enumerate}[label=\rnum]
\item If $\mathcal N\ne \emptyset$, then it follows from \eqref{b0} and the limit in \eqref{b1} that 
\begin{equation}\label{Gamze}
\lim_{n\to\infty} \Gamma_{k,n}=0 \text{ for all }k\in\mathcal N.
\end{equation}

\item We claim that 
\begin{equation}\label{vlim}
    \lim_{n\to\infty} \|v_n-v\|_{Z_0}=0.
\end{equation}

Indeed, \eqref{vlim} is obviously true in Case \ref{d1} of Definition \ref{uexp}.
In Cases \ref{d2} and \ref{d3},  by taking the limit in $Z_0$ as $n\to\infty$ of equation \eqref{b4} for $k=1$  and using \eqref{b0} and property \eqref{b2} for $k=1$, one obtains \eqref{vlim} again.

\item The justification for the terminology ``expansion" is the following remainder estimate
\begin{equation}\label{aapr}
  \frac{  \left \|  v_n-\left(v+\sum_{j=1}^{k-1} \Gamma_{j,n}w_j\right)\right\|_{Z_k}}{\Gamma_{k-1,n}}
  =\frac{\Gamma_{k,n}}{\Gamma_{k-1,n}}\| w_n^{(k)}\|_{Z_k}\to 0\cdot \|w_k\|_{Z_k}=0\text{ as }n\to\infty. 
\end{equation}
\hilite{We also emphasize that in the case \ref{d3}, just as with (1.1), there is no information about the convergence, in $k$, of the formal series $v+\sum_{k=1}^\infty \Gamma_{k,n} w_k$.}

\item In Case \ref{d2}, because $w_K=w_n^{(K)}$ and $\|w_n^{(K)}\|_{Z_{K-1}}=1$, one has
\begin{equation}\label{wKnonz}
    w_K\ne 0,
\end{equation}
that is the last term in the strict expansion is nonzero.
We also have from \eqref{b6} with $k=K$ that
\begin{equation}\label{finvn}
    v_n=v+\sum_{k=1}^K \Gamma_{k,n}w_k\text{ for } n\ge 1.
\end{equation}
Hence, for $1\le k\le K$ and $n\ge 1$, 
\begin{equation}\label{wnkfin}
    w_n^{(k)}=w_k+\sum_{j=k+1}^K \frac{\Gamma_{j,n}}{\Gamma_{k,n}}w_j.
\end{equation}

\item In the case $\mathcal N\ne \emptyset$, we denote, for convenience,
\begin{equation}\label{Nbar}
 \bar N_k=\begin{cases}
     1, & \text{ in Case \ref{d2} for } 1\le k\le K,\\
     \max\{N_1,N_2,\ldots,N_k\}, & \text{ in Case \ref{d3} for all } k\ge 1.
 \end{cases}
 \end{equation}
Then the unit vector property \eqref{b2} holds true for $k\ge 1$ and $n\ge \bar N_k$.

\item Any subsequence $(v_{n_j})_{j=1}^\infty$ of $(v_n)_{n=1}^\infty$ 
has the strict expansion
\begin{equation}\label{vsubsum}
   v_{n_j}\approx v+\sum_{k\in\mathcal N} \Gamma_{k,n_j} w_k
\end{equation}
which is of the same case \ref{d1}, \ref{d2} or \ref{d3} as  $(v_n)_{n=1}^\infty$.
\end{enumerate}
\end{obs}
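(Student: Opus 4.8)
The claim to justify is a negative one: in Case \ref{d3}, the defining conditions of a strict expansion impose \emph{no} constraint that would force the formal series $v+\sum_{k=1}^\infty\Gamma_{k,n}w_k$ to converge in $k$ for a fixed index $n$. The plan is first to read this absence of constraint directly off Definition \ref{uexp}, then to exhibit the genuinely infinite-dimensional structural reason that convergence does fail in general, and finally to indicate how one realizes the phenomenon concretely. First I would parse the hypotheses of Case \ref{d3}: the limits \eqref{b0}, \eqref{b1}, \eqref{b3} and the normalization \eqref{b2} are all statements about the limit $n\to\infty$ with $k$ held fixed (or ranging over a fixed finite set), while the exact identity \eqref{b4} holds for every pair $(n,k)$ but merely absorbs the whole tail into the single remainder $\Gamma_{k,n}w_n^{(k)}$. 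In particular \eqref{b1} controls $\Gamma_{k+1,n}/\Gamma_{k,n}$ only as $n\to\infty$; for a \emph{fixed} $n$ the scalars $(\Gamma_{k,n})_{k\ge1}$ are entirely free and need not decrease, tend to $0$, or be summable in $k$. Hence, exactly as with the formal series in \eqref{vxfirst}, nothing couples $k$ and $n$ in a manner that would make the partial sums $v+\sum_{j=1}^{k-1}\Gamma_{j,n}w_j$ Cauchy in $k$.

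The deeper reason is built into the nested setting itself. By \eqref{b4} the $k$-th partial sum equals $v_n-\Gamma_{k,n}w_n^{(k)}$; although $v_n\in Z_0$ is fixed, the remainder $w_n^{(k)}$ is only asserted to lie in $Z_{k-1}$ with unit $Z_{k-1}$-norm. Since the approximants converge to $w_k$ merely in the \emph{weaker} norm of $Z_k$ by \eqref{b3}, the limit $w_k\in Z_k$ need not belong to $Z_{k-1}$ at all; this ``loss of one space'' is precisely the phenomenon Definition \ref{uexp} is designed to accommodate. Consequently the successive terms $w_k$ may inhabit strictly increasing spaces of $\mathcal Z$ with no single member containing all of them, and the partial sums (differing from $v_n$ by $\Gamma_{k,n}w_n^{(k)}\in Z_{k-1}$) need not remain in any fixed $Z_m$: once $w_{m+1}\notin Z_m$, every partial sum through index $\ge m+1$ leaves $Z_m$. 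In that situation the notion of convergence of $v+\sum_{k}\Gamma_{k,n}w_k$ in a single normed space is not even well posed, which is the strongest possible reading of ``no information.''

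To make the phenomenon concrete I would take a Hilbert space $\mathcal H$ with orthonormal basis $(e_m)_{m\ge1}$ and weights $\lambda_m\to\infty$, set $Z_k=\{x=\sum_m x_m e_m:\ \sum_m\lambda_m^{-2k}|x_m|^2<\infty\}$ so that \eqref{nested} holds with $Z_0\subset Z_1\subset\cdots$, choose $w_k\in Z_k\setminus Z_{k-1}$, and pick scalars $\Gamma_{k,n}$ obeying \eqref{b0}--\eqref{b1} for which the rescaled tails $\Gamma_{k,n}^{-1}\sum_{j\ge k}\Gamma_{j,n}w_j$ converge in $Z_{k-1}$ and so define the remainders $w_n^{(k)}$. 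The main obstacle will be the consistency of this construction in the non-compact setting: one must produce unit vectors $w_n^{(k)}\in Z_{k-1}$ that simultaneously satisfy $\|w_n^{(k)}\|_{Z_{k-1}}=1$, converge in the weaker $Z_k$-norm to a target $w_k$ of \emph{infinite} $Z_{k-1}$-norm, and all arise from one element $v_n\in Z_0$ through \eqref{b4} for every $k$. Reconciling the normalization \eqref{b2} against a limit lying outside $Z_{k-1}$ is the delicate step, and it is exactly where the infinite-dimensional character of the problem enters; once such a family is assembled the partial sums leave every fixed $Z_m$ and the formal series has no limit there. (One may also read off this behaviour from a sequence realizing Case \ref{d3}, such as those arising in Example \ref{denexam}.)
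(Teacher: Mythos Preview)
Your proposal targets only the emphasized sentence in part~(iii); the remaining claims of the Remark already carry their own brief inline justifications. For that sentence the paper offers no argument at all---it is a parenthetical caution, parallel to the comment following \eqref{vxfirst}. Your first paragraph is the correct and complete justification: every hypothesis in Definition~\ref{uexp}\ref{d3} (the limits \eqref{b0}, \eqref{b1}, \eqref{b3} and the normalization \eqref{b2}) constrains behaviour as $n\to\infty$ with $k$ fixed, while \eqref{b4} merely absorbs the tail into $\Gamma_{k,n}w_n^{(k)}$; nothing couples $k$ and $n$ so as to control the partial sums for a fixed $n$. This already matches the paper's intent.

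The second and third paragraphs go beyond anything the paper attempts. Your structural observation---that $w_k$ is only guaranteed to lie in $Z_k$, so the partial sums can leave every fixed $Z_m$---is correct and is a feature genuinely specific to the nested setting. The construction sketch, however, does not close, and the obstacle is sharper than you acknowledge: if each $w_j\in Z_j\setminus Z_{j-1}$, then for $w_n^{(k)}=w_k+\sum_{j>k}(\Gamma_{j,n}/\Gamma_{k,n})w_j$ to land in $Z_{k-1}$ the later terms---each lying outside $Z_{k-1}$ as well---would have to cancel the $Z_{k-1}$-singularity of $w_k$, and you provide no mechanism for this. Your reference to Example~\ref{denexam} is also off target: there every $Z_k$ equals $H$, and in part~(a) the series $\sum_{k\ge1}\Gamma_{k,n}\varphi_k$ \emph{does} converge in $H$ for each fixed $n$; that example illustrates non-uniqueness of expansions, not divergence of the formal series.
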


We investigate the exclusiveness of the three cases in Definition \ref{uexp} and the uniqueness of the strict expansion \eqref{vsum}. First, we need the following preliminary uniqueness property.

\begin{lem}\label{preuniq}
Let $K$ be in $\mathbb N$  and $(Z_k,\|\cdot\|_{Z_k})$ for $0\le k \le K$ be normed spaces over $\mathbb K=\mathbb C$ or $\mathbb R$.
Let $(v_n)_{n=1}^\infty$ be a sequence in $Z_0$. Suppose there are vectors $v\in Z_0$,  $w_k\in Z_k$ and increasing integers $N_k\ge 1$ for $1\le k\le K$,  vectors $w_n^{(k)}\in Z_{k-1}$ and positive numbers $\Gamma_{k,n}$ for $n\ge 1$ and $1\le k\le K$, such that \eqref{b3}, \eqref{b4} hold, the limit in \eqref{Gamze} holds for $1\le k\le K$ and
\begin{equation*}
    \|w_n^{(k)}\|_{Z_{k-1}}=1  \text{ for all $1\le k\le K$, $n\ge N_k$.}
\end{equation*}
Then $v$, $w_k$, $w_n^{(k)}$ and $\Gamma_{k,n}$  are uniquely determined for $1\le k\le K$ and $n\ge N_k$.
\end{lem}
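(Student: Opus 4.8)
The plan is to prove uniqueness by induction on $k$, extracting at each stage explicit formulas that express $\Gamma_{k,n}$, $w_n^{(k)}$ and $w_k$ in terms of the sequence $(v_n)$ and the already-determined lower-order data. Since every one of these formulas is forced by the hypotheses, any two admissible collections of data must coincide (with $v$ and each $w_k$ being honest limits, hence unique regardless of the thresholds, and each $\Gamma_{k,n}$, $w_n^{(k)}$ agreeing on the overlap $n\ge N_k$).

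For the base case $k=1$, I would read \eqref{b4} with the empty sum as $v_n=v+\Gamma_{1,n}w_n^{(1)}$. Taking the $Z_0$-norm and using $\|w_n^{(1)}\|_{Z_0}=1$ for $n\ge N_1$ gives $\|v_n-v\|_{Z_0}=\Gamma_{1,n}$, and since $\Gamma_{1,n}\to0$ by \eqref{Gamze} this forces $v=\lim_{n\to\infty}v_n$ in $Z_0$, which determines $v$ uniquely. Then $\Gamma_{1,n}=\|v_n-v\|_{Z_0}$ and $w_n^{(1)}=(v_n-v)/\Gamma_{1,n}$ are determined for $n\ge N_1$ (both well defined because the norm equals $\Gamma_{1,n}>0$), and finally $w_1=\lim_{n\to\infty}w_n^{(1)}$ in $Z_1$ by \eqref{b3}.

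For the inductive step, assuming $v,w_1,\dots,w_{k-1}$ and $\Gamma_{j,n},w_n^{(j)}$ for $j\le k-1$ are already pinned down, I would introduce the remainder $u_{k,n}:=v_n-v-\sum_{j=1}^{k-1}\Gamma_{j,n}w_j$, which by \eqref{b4} equals $\Gamma_{k,n}w_n^{(k)}$ and lies in $Z_{k-1}$ thanks to the continuous nested embeddings $Z_0\subset\cdots\subset Z_{k-1}$ together with $w_j\in Z_j\subset Z_{k-1}$. Because the $N_k$ are increasing, for $n\ge N_k$ all ingredients of $u_{k,n}$ are already determined; taking its $Z_{k-1}$-norm and using $\|w_n^{(k)}\|_{Z_{k-1}}=1$ yields $\Gamma_{k,n}=\|u_{k,n}\|_{Z_{k-1}}$ and $w_n^{(k)}=u_{k,n}/\Gamma_{k,n}$, and then $w_k=\lim_{n\to\infty}w_n^{(k)}$ in $Z_k$ by \eqref{b3}. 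This closes the induction.

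I do not expect a deep obstacle; the delicate point is purely the bookkeeping of norms. The normalization $\|w_n^{(k)}\|_{Z_{k-1}}=1$ is measured in $Z_{k-1}$, whereas the convergence $w_n^{(k)}\to w_k$ in \eqref{b3} is measured in the larger space $Z_k$, so at each stage one must verify that $u_{k,n}$ genuinely belongs to $Z_{k-1}$ and that the lower-order quantities are available on the range $n\ge N_k$ (using the monotonicity of $N_k$). The decay hypothesis \eqref{Gamze} is needed only in the base case, to force $v$ to be the $Z_0$-limit of $v_n$; thereafter the extraction of each $\Gamma_{k,n}$ and $w_n^{(k)}$ is purely algebraic from the unit-vector normalization.
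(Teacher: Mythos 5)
Your proposal is correct and follows essentially the same route as the paper's proof: determine $v$ as the $Z_0$-limit of $(v_n)$, then recursively recover $\Gamma_{k,n}=\bigl\|v_n-v-\sum_{j=1}^{k-1}\Gamma_{j,n}w_j\bigr\|_{Z_{k-1}}$ from the unit-norm condition, hence $w_n^{(k)}$ by division and $w_k$ as its $Z_k$-limit. Your added remarks about where each norm lives and the role of the increasing thresholds $N_k$ are accurate but do not change the argument.
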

\begin{proof}
In this case, we still have the limit \eqref{vlim}. Hence the vector $v$ is determined uniquely.

For $n\ge N_1$, one has 
$$\|v_n-v\|_{Z_0}=\Gamma_{1,n}\|w_n^{(1)}\|_{Z_0}=\Gamma_{1,n},$$ 
which implies $\Gamma_{1,n}$ is uniquely determined. Then
$w_n^{(1)}=(v_n-v)/\Gamma_{1,n}$ is uniquely determined and, subsequently, 
$w_1=\lim_{n\to\infty} w_n^{(1)}$  in $Z_1$ is uniquely determined.

Recursively, for $1\le k<K$, suppose $w_1,\ldots,w_{k}$,  and  $w_n^{(j)}$, $\Gamma_{j,n}$, for $1\le j\le k$ and $n\ge N_j$, are already uniquely determined. 
For $n\ge N_{k+1}$, the numbers $\Gamma_{j,n}$ for $1\le j\le k$ are already determined uniquely, and we have
$$\left\|v_n-\sum_{j=1}^{k} \Gamma_{j,n}w_j\right \|_{Z_k}=\Gamma_{k+1,n}\|w_n^{(k+1)}\|_{Z_k}=\Gamma_{k+1,n}$$
which implies $\Gamma_{k+1,n}$ is uniquely determined. Consequently, the vector $w_n^{(k+1)}$, for $n\ge  N_{k+1}$, is uniquely determined by 
$$ w_n^{(k+1)} = \frac1{\Gamma_{k+1,n}}\left(v_n-\sum_{j=1}^{k} \Gamma_{j,n}w_j\right),$$
and then $w_{k+1}=\lim_{n\to\infty} w_n^{(k+1)}$ in $Z_{k+1}$ is uniquely determined.    
\end{proof}

The following exclusiveness was stated as an obvious fact in \cite[Defintion 4.1]{FHJ}. In fact, it can be justified implicitly by the proof of \cite[Proposition 4.1]{FHJ}. In Proposition \ref{exclusive} below, we present a complete proof.

\begin{prop}\label{exclusive}
The three cases \ref{d1}, \ref{d2}, \ref{d3} in Definition \ref{uexp} are exclusive.
\end{prop}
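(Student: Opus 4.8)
The plan is to show that no sequence $(v_n)_{n=1}^\infty$ in $Z_0$ can simultaneously satisfy two of the three conditions \ref{d1}, \ref{d2}, \ref{d3}, treating the three pairings separately. The fact used throughout is \eqref{vlim}: in every case the limit is forced to be $v=\lim_{n\to\infty}v_n$ in $Z_0$, so the ``$v$'' appearing in any two expansions of the same sequence must coincide. This settles at once the pairings involving \ref{d1}. Indeed, a Case \ref{d1} expansion gives $v_n=v$ for all $n$, whereas a Case \ref{d2} or \ref{d3} expansion gives, by \eqref{b4} with $k=1$ and the unit-vector property \eqref{b2} (valid for $n\ge\bar N_1$, see \eqref{Nbar}), the identity $\|v_n-v\|_{Z_0}=\Gamma_{1,n}\|w_n^{(1)}\|_{Z_0}=\Gamma_{1,n}>0$; this contradicts $v_n=v$. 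Hence \ref{d1} excludes both \ref{d2} and \ref{d3}.

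The main obstacle is ruling out a simultaneous Case \ref{d2} and Case \ref{d3} expansion, and here the plan is to compare their first $K$ terms via Lemma \ref{preuniq}. Suppose such a sequence exists, with Case \ref{d2} data $K,v,w_k,\Gamma_{k,n},w_n^{(k)}$ and Case \ref{d3} data $\hat v,\hat w_k,\hat\Gamma_{k,n},\hat w_n^{(k)},N_k$; by \eqref{vlim} we have $\hat v=v$. Restricted to $1\le k\le K$, both sets of data satisfy the hypotheses of Lemma \ref{preuniq}: the limits \eqref{b3} and the equations \eqref{b4} hold, the coefficients vanish as in \eqref{Gamze} (Remark \ref{remscl}), and the unit-vector property holds for $n\ge\bar N_k$, where $\bar N_k=1$ in Case \ref{d2} and $\bar N_k=\max\{N_1,\ldots,N_k\}$ in Case \ref{d3}. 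Since the uniqueness conclusion of the lemma is dictated solely by $(v_n)$ and the successively determined lower-order data, the two sets of data must agree for all sufficiently large $n$: $\hat w_k=w_k$ for $1\le k\le K$ and $\hat\Gamma_{k,n}=\Gamma_{k,n}$, $\hat w_n^{(k)}=w_n^{(k)}$ for $n\ge\bar N_k$.

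Finally I would extract a contradiction from the termination of the Case \ref{d2} expansion. By \eqref{finvn} one has $v_n=v+\sum_{k=1}^K\Gamma_{k,n}w_k$ for all $n$, while applying \eqref{b4} to the Case \ref{d3} expansion with $k=K+1$ gives $v_n=\hat v+\sum_{j=1}^K\hat\Gamma_{j,n}\hat w_j+\hat\Gamma_{K+1,n}\hat w_n^{(K+1)}$. Taking $n\ge\bar N_{K+1}$, so that $\hat v=v$, $\hat w_j=w_j$ and $\hat\Gamma_{j,n}=\Gamma_{j,n}$ for $1\le j\le K$, subtraction of the two representations yields $\hat\Gamma_{K+1,n}\hat w_n^{(K+1)}=0$ in $Z_K$. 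But $\hat\Gamma_{K+1,n}>0$ and, by the unit-vector property \eqref{b2}, $\|\hat w_n^{(K+1)}\|_{Z_K}=1$ for $n\ge\bar N_{K+1}$, so $\|\hat\Gamma_{K+1,n}\hat w_n^{(K+1)}\|_{Z_K}=\hat\Gamma_{K+1,n}>0$, a contradiction. This separates \ref{d2} from \ref{d3} and completes the proof. The delicate point is the simultaneous application of Lemma \ref{preuniq} to two a priori distinct expansions of the same sequence; it is resolved by observing that the lemma determines each datum by an explicit formula in $(v_n)$ and the previously fixed data, forcing agreement once $n$ exceeds the relevant thresholds.
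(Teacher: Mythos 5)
Your proof is correct and follows essentially the same route as the paper: the limit \eqref{vlim} pins down $v$ and separates \ref{d1} from the other two cases via $\|v_n-v\|_{Z_0}=\Gamma_{1,n}>0$, and the \ref{d2}/\ref{d3} conflict is resolved by matching the first $K$ terms through Lemma \ref{preuniq} and then deriving $\Gamma_{K+1,n}=\|v_n-v-\sum_{k=1}^K\Gamma_{k,n}w_k\|_{Z_K}>0$ against the terminated finite expansion. No gaps; your subtraction argument at the end is just a rephrasing of the paper's display \eqref{wcontra}.
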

\begin{proof}
Thanks to \eqref{vlim}, the vector $v$ is uniquely determined.
Suppose we have Case \ref{d2} or \ref{d3} in Definition \ref{uexp}. 
 Note from \eqref{Nbar} that  $\bar N_1=1$ in Case \ref{d2} and $\bar N_1=N_1$ in Case \ref{d3}. 
For $n\ge \bar N_1$, one has
$$\|v_n-v\|_{Z_0}=\Gamma_{1,n}\|w_n^{(1)}\|_{Z_0}=\Gamma_{1,n}>0,$$ which implies that we cannot have Case \ref{d1}.

Now, suppose we have both Cases \ref{d2} and \ref{d3} concurrently, that is, we have both a nontrivial, finite strict expansion, see \eqref{finvn},
\begin{equation}\label{v2} 
v_n=v+\sum_{k=1}^K \Gamma_{k,n}'w_k' \text{ for } n\ge 1,
\end{equation}
and an infinite strict expansion 
\begin{equation} \label{v3} 
v_n\approx v+\sum_{k=1}^\infty \Gamma_{k,n}w_k\text{ together with integers $N_k$ and vectors $w_n^{(k)}$.}
\end{equation}
By the virtue of Lemma \ref{exclusive},
we must have 
\begin{equation}\label{v4}
w_k=w_k'\text{ and }\Gamma_{k,n}=\Gamma_{k,n}'\text{ for $1\le k\le K$ and $n\ge \bar N_k$.}
\end{equation}
For $n\ge \bar N_{K+1}$, we have from \eqref{v3} that
\begin{equation}\label{wcontra} 
\left\|v_n-v-\sum_{k=1}^{K} \Gamma_{k,n}w_k\right\|_{Z_k}=\Gamma_{K+1,n}\|w_n^{(K+1)}\|_{Z_K}=\Gamma_{K+1,n}>0,
\end{equation}
which contradicts \eqref{v2} and \eqref{v4}.
In conclusion, the three cases \ref{d1}, \ref{d2}, \ref{d3} in Definition \ref{uexp} are exclusive.
\end{proof}

On the issue of uniqueness of the strict expansions, we have the following ``asymptotic uniqueness" property.

\begin{prop}\label{unique}
Assume a sequence $(v_n)_{n=1}^\infty$ has a strict expansion \eqref{vsum} as in Definition \ref{uexp}.
 Let $\bar N_k$ be defined by \eqref{Nbar}.
 Then the set $\mathcal N$, vector $v$, 
 and, in the cases \ref{d2} and \ref{d3},
 the vectors $w_k$  for $k\in \mathcal N$, 
the vectors   $w_n^{(k)}$ and positive numbers $\Gamma_{k,n}$ for $k\in \mathcal N$ and $n\ge \bar N_k$, are uniquely determined.
 \end{prop}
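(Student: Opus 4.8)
The plan is to reduce the statement entirely to the two facts already established: the case trichotomy of Proposition~\ref{exclusive} and the quantitative uniqueness of Lemma~\ref{preuniq}. The uniqueness of $v$ is immediate, since by Remark~\ref{remscl}(b) the limit \eqref{vlim} holds for every strict expansion, so $v=\lim_{n\to\infty}v_n$ in $Z_0$ is forced independently of which expansion we are given. Likewise, Proposition~\ref{exclusive} guarantees that exactly one of the Cases \ref{d1}, \ref{d2}, \ref{d3} can hold for a given sequence, so the \emph{type} of the expansion is determined. What then remains is (a) to pin down the length $K$ in Case \ref{d2}, so that the set $\mathcal N$ is itself unique, and (b) to show that the vectors $w_k$, $w_n^{(k)}$ and the scalars $\Gamma_{k,n}$ are forced in Cases \ref{d2} and \ref{d3}.

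For (b) I would invoke Lemma~\ref{preuniq}. In Case \ref{d2} the unit-vector property \eqref{b2} holds for all $n\ge1$, so $\bar N_k=1$ by \eqref{Nbar}; together with \eqref{b3}, \eqref{b4} and the decay \eqref{Gamze} (from Remark~\ref{remscl}(a)), all hypotheses of Lemma~\ref{preuniq} are met with $N_k=1$, yielding uniqueness of $w_k$, $w_n^{(k)}$, $\Gamma_{k,n}$ for $1\le k\le K$ and $n\ge1$. In Case \ref{d3} I would fix an arbitrary $K\in\mathbb N$, pass from the given (possibly non-monotone) integers $N_k$ to the non-decreasing thresholds $\bar N_k=\max\{N_1,\dots,N_k\}$, and use Remark~\ref{remscl}(e) to see that \eqref{b2} then holds for $n\ge\bar N_k$. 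Applying Lemma~\ref{preuniq} on the truncated chain $Z_0\subset\cdots\subset Z_K$ gives uniqueness for $1\le k\le K$ and $n\ge\bar N_k$; since the recursion in the proof of Lemma~\ref{preuniq} determines the level-$k$ data from $v$ and $w_1,\dots,w_{k-1}$ alone (hence not from the choice of cutoff $K\ge k$), letting $K\to\infty$ delivers uniqueness for all $k\ge1$.

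For (a), suppose toward a contradiction that the sequence admitted two finite strict expansions, of lengths $K<K'$. By part (b) applied at level $K$, their first $K$ terms coincide, say $w_k$ and $\Gamma_{k,n}$ for $1\le k\le K$ and $n$ large. The shorter expansion gives, by \eqref{finvn}, the exact identity $v_n=v+\sum_{k=1}^K\Gamma_{k,n}w_k$, whereas the longer one yields, for $n\ge\bar N_{K+1}$,
\[
\left\|v_n-v-\sum_{k=1}^{K}\Gamma_{k,n}w_k\right\|_{Z_K}
=\Gamma_{K+1,n}\,\|w_n^{(K+1)}\|_{Z_K}=\Gamma_{K+1,n}>0,
\]
which is the contradiction, exactly as in \eqref{wcontra} of the proof of Proposition~\ref{exclusive}. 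Hence $K=K'$, so $\mathcal N=\{1,\dots,K\}$ is uniquely determined, completing the identification of $\mathcal N$ in all three cases.

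The step I expect to require the most care is the bookkeeping in Case \ref{d3}: the cutoff $\bar N_k$ is a feature of the particular expansion, so the phrase ``uniquely determined for $n\ge\bar N_k$'' must be read as agreement of two candidate expansions for all $n$ beyond the larger of their respective thresholds, and one must verify that the monotonicity hypothesis on the $N_k$ in Lemma~\ref{preuniq} is genuinely restored by passing to the non-decreasing $\bar N_k$. Once this is handled, the remainder is a direct assembly of Proposition~\ref{exclusive}, Lemma~\ref{preuniq} and the observations in Remark~\ref{remscl}.
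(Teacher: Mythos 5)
Your proposal is correct and follows essentially the same route as the paper's own proof: uniqueness of $v$ via \eqref{vlim}, the case trichotomy via Proposition~\ref{exclusive}, Lemma~\ref{preuniq} (applied with $\bar N_k$ in place of $N_k$, and on truncated chains with $K\to\infty$ in Case~\ref{d3}) for the terms, and the \eqref{wcontra}-style contradiction to force equality of the lengths in Case~\ref{d2}. Your extra care about the non-monotone $N_k$ and about reading ``uniquely determined for $n\ge\bar N_k$'' correctly matches the paper's Remark~\ref{caution}.
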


 The proof of Proposition \ref{unique} is given in the Appendix.

\begin{obs}\label{caution}
    There is a subtlety in the ``asymptotic" part of the uniqueness in Proposition \ref{unique} for Case \ref{d3}. It depends on the \emph{given} integers $N_k$. Assume we have another infinite strict expansion
    \begin{equation}\label{another}
    v_n\approx v+\sum_{k=1}^\infty \Gamma'_{k,n}w'_k\text{ together with vectors $w_n'^{(k)}$ and claimed numbers  $N'_k$.}
    \end{equation}
    Define $\bar N_k$ and $\bar N'_k$ correspondingly.
    Then $\Gamma_{k,n}=\Gamma'_{k,n}$ and $w_n^{(k)}=w_n'^{(k)}$ for $n\ge \max\{\bar N_k,\bar N'_k\}$. Suppose, say, $\bar N_k>\bar N'_k$. Although $\Gamma'_{k,n}$ and $w_n'^{(k)}$ are uniquely determined according to the strict expansion \eqref{another} for $n\ge \bar N'_k$, it does not imply $\Gamma_{k,n}'=\Gamma_{k,n}$ and $w_n'^{(k)}=w_n^{(k)}$ for 
    $\bar N'_k< n<\bar N_k$.
\end{obs}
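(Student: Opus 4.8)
\emph{Structure of the statement.} The remark packages two assertions: (i) the two infinite strict expansions agree, $\Gamma_{k,n}=\Gamma'_{k,n}$ and $w_n^{(k)}=w_n'^{(k)}$, as soon as $n\ge\max\{\bar N_k,\bar N'_k\}$; and (ii) this agreement need not persist on the gap $\bar N'_k<n<\bar N_k$ when $\bar N_k>\bar N'_k$. Both expansions fall under Case \ref{d3} by hypothesis, so by Proposition \ref{unique} they share the same $v$ and the same limit vectors, $w'_k=w_k$ for every $k$; the remaining content concerns only the finite-$n$ data $\Gamma_{k,n},w_n^{(k)}$.

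\emph{Assertion (i).} The plan is to run the recursion of Lemma \ref{preuniq} simultaneously on both expansions. Fix $k$ and $n\ge\max\{\bar N_k,\bar N'_k\}$; since $\bar N_j\le\bar N_k$ and $\bar N'_j\le\bar N'_k$ for $j\le k$, one has $n\ge\max\{N_j,N'_j\}$ for every $j\le k$, so the unit-vector condition \eqref{b2} holds at every level $j\le k$ for \emph{both} expansions at this $n$. I would then induct on $j$: subtracting equation \eqref{b4} at level $j$ for the two expansions, whose common left-hand side is $v_n$ and whose lower-order terms $\sum_{i<j}\Gamma_{i,n}w_i$ already coincide by the inductive hypothesis together with $w_i'=w_i$, gives $\Gamma_{j,n}w_n^{(j)}=\Gamma'_{j,n}w_n'^{(j)}$; taking the $Z_{j-1}$-norm and using $\|w_n^{(j)}\|_{Z_{j-1}}=\|w_n'^{(j)}\|_{Z_{j-1}}=1$ forces $\Gamma_{j,n}=\Gamma'_{j,n}$ and hence $w_n^{(j)}=w_n'^{(j)}$. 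Equivalently, Lemma \ref{preuniq} expresses each $\Gamma_{j,n},w_n^{(j)}$ through $v,w_1,\dots,w_{j-1},v_n$ and norms alone, quantities common to both expansions, so wherever the normalization is in force the two must produce identical values; the case $j=k$ is (i).

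\emph{Assertion (ii).} Here the point is that Case \ref{d3} of Definition \ref{uexp} imposes \eqref{b2} only for $n\ge N_k$, so below that threshold $(\Gamma_{k,n},w_n^{(k)})$ is unconstrained apart from \eqref{b4}. I would exhibit a single sequence carrying two infinite strict expansions that differ on the gap. Starting from one expansion (the primed one, with all $N'_k=1$, so \eqref{b2} holds for every $n,k$ and $\bar N'_k=1$), I would build a second expansion with the same $v,w_k$ but a strictly larger threshold $N_{k_0}$ at a single level $k_0$, chosen so that the integer interval $\bar N'_{k_0}<n<\bar N_{k_0}$ is nonempty. For a gap index $n_0$, let $m\le k_0$ be the smallest level with $N_m>n_0$; for $j<m$ the normalization still holds at $n_0$, so by (i) those data are forced and the residual $R:=v_{n_0}-v-\sum_{j<m}\Gamma_{j,n_0}w_j$ is a fixed nonzero vector. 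At level $m$, \eqref{b4} only requires $\Gamma_{m,n_0}w_{n_0}^{(m)}=R$, so replacing the normalized choice by the rescaled pair $\bigl(\lambda\|R\|_{Z_{m-1}},\,R/(\lambda\|R\|_{Z_{m-1}})\bigr)$ with $\lambda\ne1$ is still admissible (its $Z_{m-1}$-norm is $1/\lambda\ne1$, permitted since $n_0<N_m$); one then continues the recursion at the higher, again-normalized levels. Since only the single index $n_0$ is altered, the asymptotic requirements \eqref{b0}, \eqref{b1}, \eqref{b3}, all governed by limits as $n\to\infty$, are untouched, and the second expansion differs from the first exactly at $n_0$ in the gap.

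\emph{Main obstacle.} The hard part is the propagation. Because the limit vectors $w_k$ are in general linearly independent, altering $\Gamma_{m,n_0}$ forces a change at level $m+1$ (subtract consecutive levels of \eqref{b4}) and thence at every higher level; there is no finite-support modification. The genuine task is therefore to verify that the propagated residuals $R_{k,n_0}=v_{n_0}-v-\sum_{j<k}\Gamma_{j,n_0}w_j$ stay nonzero for all $k$, so that every $\Gamma_{k,n_0}>0$ and the second expansion is bona fide Case \ref{d3}. I expect to secure this in an explicit model, e.g. a weighted $\ell^2$ scale $Z_k$ with $w_k=e_k$ mutually orthogonal in each $Z_j$-inner product, where the residual-norm recursion decouples and nonvanishing becomes transparent; this orthogonal model, rather than a soft perturbation in $\lambda$ near $1$, is where the real effort lies.
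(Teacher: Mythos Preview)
The paper provides no proof for this remark; it is stated as a cautionary observation and left at that. Your proposal therefore does considerably more than the paper itself.

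Your argument for assertion~(i) is correct and is precisely the mechanism behind Lemma~\ref{preuniq} and the Case~\ref{d3} part of the proof of Proposition~\ref{unique}: once $n\ge\max\{\bar N_k,\bar N'_k\}$, the unit-vector condition~\eqref{b2} holds at every level $j\le k$ for both expansions, and the recursion of Lemma~\ref{preuniq} then forces the data to coincide. This is exactly how the paper would justify the positive claim, were it to do so explicitly.

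For assertion~(ii) you go well beyond the paper, which offers nothing. Your construction is sound in spirit, but you are working harder than necessary on the propagation issue. A more direct route is to look at the paper's own construction in the proof of Theorem~\ref{mainlem}: in Case~\ref{d3} the definitions~\eqref{GWsmall} for $n<N_k=k$ involve an \emph{arbitrary} choice (the factor $1/2^{nk}$ when the residual $z_{k,n}$ vanishes, and even the decision to normalize in $Z_{k-1}$ when it does not). Replacing $1/2^{nk}$ by, say, $1/3^{nk}$ yields a second valid infinite strict expansion of the same sequence with the same $v$, $w_k$, but different $(\Gamma_{k,n},w_n^{(k)})$ below threshold. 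This sidesteps your concern about keeping all propagated residuals nonzero, since in~\eqref{GWsmall} the zero-residual case is handled by fiat with $w_n^{(k)}=0$ and an arbitrary positive $\Gamma_{k,n}$. Your explicit $\ell^2$ model would also work, but the paper's own construction already contains the example you need.
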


The following is the main abstract expansion existence result which is a counterpart of the finite dimensional version \cite[Lemma 4.2]{FHJ}. Its proof follows the lines of that of \cite[Lemma 4.2]{FHJ} with careful modifications to deal with different norms in different infinite dimensional spaces.

\begin{thm}\label{mainlem} 
Let  $\mathcal Z=((Z_k,\|\cdot\|_{Z_k}))_{k=0}^\infty$ be a family of normed spaces over $\mathbb K=\mathbb C$ or $\mathbb R$ such that \eqref{nested} holds with all embeddings being compact. Then any convergent sequence in $Z_0$ has a subsequence that possesses a strict expansion in $\mathcal Z$.
\end{thm}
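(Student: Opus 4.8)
The plan is to build the expansion recursively, extracting a further subsequence at each stage and invoking the compactness of the embeddings $Z_{k-1}\hookrightarrow Z_k$ at exactly the points where finite dimensionality was used in \cite{FHJ}. Let $v=\lim_{n\to\infty}v_n$ in $Z_0$, which exists by hypothesis and will serve as the constant term. The stage-$0$ remainder is $u_n^{(0)}=v_n-v$, which tends to $0$ in $Z_0$. If $u_n^{(0)}=0$ for infinitely many $n$, I pass to that subsequence and land in Case \ref{d1}. Otherwise, after discarding finitely many terms, I may assume $u_n^{(0)}\ne 0$ for all $n$ and proceed.

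The recursive step is the heart of the matter. Suppose that along the current subsequence I have produced $w_1,\dots,w_{k-1}$, scalars $\Gamma_{j,n}>0$, and unit vectors $w_n^{(j)}$ for which \eqref{b2} and \eqref{b4} hold, and for which the reduced remainder $u_n^{(k-1)}=w_n^{(k-1)}-w_{k-1}$ tends to $0$ in $Z_{k-1}$ (for $k=1$ read $u_n^{(0)}=v_n-v$ and $\Gamma_{0,n}=1$). I set $\Gamma_{k,n}=\Gamma_{k-1,n}\,\|u_n^{(k-1)}\|_{Z_{k-1}}>0$ and $w_n^{(k)}=u_n^{(k-1)}/\|u_n^{(k-1)}\|_{Z_{k-1}}\in Z_{k-1}$, so that $\|w_n^{(k)}\|_{Z_{k-1}}=1$, giving \eqref{b2}, and $\Gamma_{k,n}/\Gamma_{k-1,n}=\|u_n^{(k-1)}\|_{Z_{k-1}}\to 0$, giving \eqref{b1} (and \eqref{b0} at $k=1$). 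The identity $\Gamma_{k-1,n}u_n^{(k-1)}=\Gamma_{k,n}w_n^{(k)}$ then upgrades \eqref{b4} from level $k-1$ to level $k$. Here is the one place where compactness enters: the sequence $(w_n^{(k)})$ is bounded in $Z_{k-1}$, so by the compact embedding $Z_{k-1}\hookrightarrow Z_k$ it has a subsequence converging in $Z_k$ to some $w_k\in Z_k$. Passing to this subsequence yields \eqref{b3}, and I set $u_n^{(k)}=w_n^{(k)}-w_k\to 0$ in $Z_k$ to prime the next stage.

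At this point I make a dichotomy. If $u_n^{(k)}=0$ for infinitely many $n$, then $w_n^{(k)}=w_k$ there; passing to that subsequence and relabeling, I obtain \eqref{b6} with $K=k$ and terminate in Case \ref{d2}. Otherwise I discard finitely many terms so that $u_n^{(k)}\ne 0$ for all $n$ and continue to stage $k+1$. If the recursion never terminates, it produces a nested family of infinite index sets $\mathbb N=S_0\supseteq S_1\supseteq S_2\supseteq\cdots$, along each $S_k$ of which the data through level $k$ (the convergences \eqref{b1}, \eqref{b3}, the unit-norm property \eqref{b2}, and the reconstruction \eqref{b4}) hold. I then diagonalize: taking $n_k$ to be the $k$th element of $S_k$ in increasing order, the tail $\{n_j:j\ge k\}$ lies in $S_k$, so for every fixed $k$ the relations \eqref{b0}, \eqref{b1}, \eqref{b3} persist along $(n_j)$, while \eqref{b2} holds once $n_j$ has entered $S_{k-1}$, i.e.\ for $j\ge N_k$ with a suitable threshold $N_k$. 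This is precisely the structure of Case \ref{d3}, which completes the argument.

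The main obstacle is the loss of local compactness flagged in the introduction: a bounded sequence of unit vectors in a single infinite-dimensional space need not have a convergent subsequence. The device overcoming it is to normalize the remainder in the stronger norm $\|\cdot\|_{Z_{k-1}}$ while extracting its limit in the weaker norm $\|\cdot\|_{Z_k}$ through the compact embedding; this deliberate shift of spaces at each stage is what makes the scheme run. It is also what forces the thresholds $N_k$, and hence the genuinely infinite-dimensional Case \ref{d3}, to appear in the diagonalization, since no single relabeling can make the unit-norm condition \eqref{b2} hold for all $n$ simultaneously across infinitely many levels.
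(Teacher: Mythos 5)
Your construction is, in all essentials, the paper's own: normalize the stage-$(k-1)$ remainder in the $Z_{k-1}$ norm, use the compact embedding $Z_{k-1}\hookrightarrow Z_k$ to extract the limit $w_k$ in the weaker norm, terminate via the dichotomy on whether $w_n^{(k)}=w_k$ infinitely often, and diagonalize if the recursion never stops. The finite cases \ref{d1} and \ref{d2} are handled correctly.

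There is, however, one concrete gap in your treatment of the infinite case. Definition \ref{uexp}\ref{d3} requires the data $\Gamma_{k,n}>0$ and $w_n^{(k)}\in Z_{k-1}$ to exist for \emph{all} $n\ge 1$ and $k\ge 1$, and requires the reconstruction identity \eqref{b4} to hold for \emph{all} $n,k\ge 1$; only the unit-norm condition \eqref{b2} is relaxed to $n\ge N_k$. Along your diagonal $(n_j)$, for a fixed level $k$ the quantities $\Gamma_{k,n_j}$ and $w_{n_j}^{(k)}$ are simply undefined for the early indices $j$ below the threshold (those $n_j$ that never entered $S_{k-1}$), so the object you produce does not yet satisfy Case \ref{d3} as stated: you account for the threshold in \eqref{b2} but silently assume \eqref{b4} also only needs to hold beyond it. The paper patches this explicitly: for $n<k$ it sets $z_{k,n}=V_n-v-\sum_{j=1}^{k-1}\widetilde\Gamma_{j,n}w_j\in Z_{k-1}$ and defines $\widetilde\Gamma_{k,n}=\|z_{k,n}\|_{Z_{k-1}}$, $\widetilde w_n^{(k)}=z_{k,n}/\widetilde\Gamma_{k,n}$ when $z_{k,n}\ne 0$, and $\widetilde\Gamma_{k,n}=2^{-nk}\widetilde\Gamma_{k-1,n}$, $\widetilde w_n^{(k)}=0$ when $z_{k,n}=0$ (the second branch is needed to keep $\Gamma_{k,n}$ strictly positive). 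Since \eqref{b0}, \eqref{b1}, \eqref{b3} are limit statements, these finitely many ad hoc terms at each level do not disturb them. Your argument is complete once you add this step; without it, the final assertion that the diagonal ``is precisely the structure of Case \ref{d3}'' does not follow.
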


\begin{proof}   
Let $(v_n)_{n=1}^\infty$ be a sequence in $Z_0$ that converges to $v\in Z_0$.
Denote $\varphi_0(n)=n$ for $n\in\mathbb N$.
Then 
\begin{equation}\label{vnlim}
    \lim_{n\to \infty}\| v_{\varphi_0(n)}-v\|_{Z_0}=0.
\end{equation} 

If there is $N_0\ge 1$ such that $v_n=v$ for all $n\ge N_0$, then we have Case \ref{d1} in Definition \ref{uexp} for a subsequence of $(v_n)_{n=1}^\infty$. 

Otherwise, there is a subsequence $(\varphi_1(n))_{n=1}^\infty$ of $(\varphi_0(n))_{n=1}^\infty$ such that
$$v_{\varphi_1(n)}\ne v\text{ for all }n\in\mathbb N.$$
Define $\gamma_{\varphi_1(n)}^{(1)}=\|v_{\varphi_1(n)}-v\|_{Z_0}$ for all $n\in \mathbb N$. 
For $n\in \mathbb N$, we have $\gamma_{\varphi_1(n)}^{(1)}>0$, and can define 
\begin{equation*}%\label{w1}
w_{\varphi_1(n)}^{(1)}=\frac{1}{\gamma_{\varphi_1(n)}^{(1)}}(v_{\varphi_1(n)}-v).    
\end{equation*}
For all $n\in \mathbb N$, defining $\Gamma_{1,{\varphi_1(n)}}=\gamma_{\varphi_1(n)}^{(1)}>0$, one has
\begin{equation}\label{w2}
    v_{\varphi_1(n)}=v+\Gamma_{1,{\varphi_1(n)}}w_{\varphi_1(n)}^{(1)}, \
    \|w_{\varphi_1(n)}^{(1)}\|_{Z_0}=1,\text{ and by \eqref{vnlim}, } 
    \lim_{n\to\infty}\Gamma_{1,{\varphi_1(n)}}=0.
\end{equation}
By the compact embedding of $Z_0$ into $Z_1$, there exists a 
subsequence of $(\varphi_1(n))_{n=1}^\infty$, still denoted by $(\varphi_1(n))_{n=1}^\infty$, and $w_1\in Z_1$  such that 
\begin{equation}\label{w3}
    \lim_{n\to\infty}\|w_{\varphi_1(n)}^{(1)}-w_1\|_{Z_1}=0.
\end{equation}

We will construct $\varphi_k(n)$, $\Gamma_{k,\varphi_k(n)}$, $w_k$, $w_{\varphi_k(n)}^{(k)}$ recursively in $k$ as follows.
Denote $w_0=v$, let $w_n^{(0)}=v_n$ and $\Gamma_{0,n}=1$ for all $n\ge 1$.
For $k\ge 1$, we define the following statement ($T_k$).

\medskip
($T_k$) \emph{There are subsequences $(\varphi_j(n))_{n=1}^\infty$ of $(n)_{n=1}^\infty$, for $1\le j\le k$, with each $(\varphi_{j}(n))_{n=1}^\infty$ being a subsequence of $(\varphi_{j-1}(n))_{n=1}^\infty$,  vectors $w_j\in Z_j$, positive numbers $\Gamma_{j,{\varphi_j(n)}}$  for $n\in\mathbb N$, and vectors $w_{\varphi_k(n)}^{(k)}\in Z_{k-1}$ for $n\in\mathbb N$, 
such that one has
\begin{equation}\label{w4}
    \lim_{n\to\infty}\frac{\Gamma_{j,{\varphi_j(n)}}}{\Gamma_{j-1,{\varphi_j(n)}}}=0 \text{ for }1\le j\le k,
\end{equation}
\begin{equation}\label{w5}
    \|w_{\varphi_k(n)}^{(k)}\|_{Z_{k-1}}=1 \text{ for  all } n\in \mathbb N,
\end{equation}
\begin{equation}\label{w6}
    \lim_{n\to\infty}\|w_{\varphi_k(n)}^{(k)}-w_k\|_{Z_k}=0,
\end{equation}
and 
\begin{equation}\label{w7}
 v_{\varphi_k(n)}=v+\Gamma_{1,{\varphi_k(n)}}w_1+\Gamma_{2,{\varphi_k(n)}}w_2+\cdots+
\Gamma_{k-1,{\varphi_k(n)}}w_{k-1}+\Gamma_{k,{\varphi_k(n)}}w_{\varphi_k(n)}^{(k)}
\end{equation}
for all $n\in\mathbb N$.
} %T_k

\medskip
By \eqref{w2} and \eqref{w3}, we already have ($T_1$).
Let $k\ge 1$. Assume all ($T_j$) holds true for $1\le j\le k$. Consider the sequence $(w_{\varphi_k(n)}^{(k)})_{n=1}^\infty$ in \eqref{w7} of ($T_k$).

\medskip
\emph{Case 1. $w_{\varphi_k(n)}^{(k)}=w_k$ for all large $n\in \mathbb N$.} 
Set $K=k$.
Then there is a subsequence $(\varphi(n))_{n=1}^\infty$ of $(\varphi_K(n))_{n=1}^\infty$ such that
\begin{equation*}%\label{w12}
    w_{\varphi(n)}^{(K)}= w_{K}\text{ for all }n\in\mathbb N.
\end{equation*}

For $n\in\mathbb N$, let $V_n=v_{\varphi(n)}$, and, for $1\le j\le K$, $\widetilde \Gamma_{j,n}=\Gamma_{j,\varphi(n)}$ and $\widetilde w_n^{(j)}=w_{\varphi(n)}^{(j)}$. Then $(V_n)_{n=1}^\infty$ is a subsequence of $(v_{\varphi_j(n)})_{n=1}^\infty$, for $1\le j\le K$, and in particular, of $(v_n)_{n=1}^\infty$. 
One has, from \eqref{w2},
\begin{equation}\label{wtil1}
    \lim_{n\to\infty} \widetilde\Gamma_{1,n}=0,
\end{equation}
 from \eqref{w4} of ($T_K$)
\begin{equation}\label{wtil4}
    \lim_{n\to\infty} \frac{\widetilde\Gamma_{j+1,n}}{\widetilde\Gamma_{j,n}}=0\text{ for }1\le j<K,
\end{equation}
and from \eqref{w5},  \eqref{w6}, \eqref{w7} of ($T_j$), for $1\le j\le K$ and $n\in \mathbb N$,
\begin{equation}\label{wtil5}
    \|\widetilde w_n^{(j)}\|_{Z_{j-1}}=1, 
\end{equation}
\begin{equation}\label{wtil6}
    \lim_{n\to\infty}\|\widetilde w_n^{(j)}-w_j\|_{Z_j}=0,
\end{equation}
\begin{equation}\label{wtil7}
 V_n=v+\widetilde\Gamma_{1,n}w_1+\widetilde\Gamma_{2,n}w_2+\cdots+
\widetilde\Gamma_{j-1,n}w_{j-1}+\widetilde\Gamma_{j,n}\widetilde w_n^{(j)}.
\end{equation}

Therefore, we obtain Case \ref{d2} in Definition \ref{uexp} for the sequence $(V_n)_{n=1}^\infty$ 
with $\widetilde \Gamma_{1,n}$ replacing $ \Gamma_{1,n}$ and  $\widetilde w_n^{(j)}$ replacing $w_n^{(j)}$. We stop the recursion.

\medskip
\emph{Case 2. There is a subsequence $(\varphi_{k+1}(n))_{n=1}^\infty$ of $(\varphi_k(n))_{n=1}^\infty$ such that
$$w_{\varphi_{k+1}(n)}^{(k)}\ne w_k\text{ for all } n\in\mathbb N.$$}

Define, for all $n\in\mathbb N$,  
$$\gamma_{\varphi_{k+1}(n)}^{(k+1)}=\|w_{\varphi_{k+1}(n)}^{(k)}-w_k\|_{Z_k}>0, \quad 
\Gamma_{k+1,\varphi_{k+1}(n)}=\Gamma_{k,\varphi_{k+1}(n)}\gamma_{\varphi_{k+1}(n)}^{(k+1)},$$
and 
$$w_{\varphi_{k+1}(n)}^{(k+1)}=\frac1{\gamma_{\varphi_{k+1}(n)}^{(k+1)}}(w_{\varphi_{k+1}(n)}^{(k)}-w_k).
$$
The last identity yields
\begin{equation}\label{w9}
w_{\varphi_{k+1}(n)}^{(k)}=w_k+\gamma_{\varphi_{k+1}(n)}^{(k+1)}w_{\varphi_{k+1}(n)}^{(k+1)}.    
\end{equation}
Thanks to ($T_k$), we can substitute  the last equation \eqref{w9} into \eqref{w7} with ${\varphi_{k+1}(n)}$ replacing ${\varphi_{k}(n)}$ to obtain  
\begin{equation*}%\label{w10}
  v_{\varphi_{k+1}(n)}=v+\Gamma_{1,{\varphi_{k+1}(n)}}w_1+\Gamma_{2,{\varphi_{k+1}(n)}}w_2+\cdots+
\Gamma_{k,\varphi_{k+1}(n)}w_{k}+\Gamma_{k+1,{\varphi_{k+1}(n)}}w_{\varphi_{k+1}(n)}^{(k+1)}
\end{equation*}
for all $n\in\mathbb N$.
It is also clear that $\|w_{\varphi_{k+1}(n)}^{(k+1)}\|_{Z_k}=1$ for all $n\in \mathbb N$.
By \eqref{w6}, 
$$\lim_{n\to\infty }\gamma_{\varphi_{k+1}(n)}^{(k+1)}=0,\text{ hence, }
\lim_{n\to\infty}\frac{\Gamma_{k+1,{\varphi_{k+1}(n)}}}{\Gamma_{k,{\varphi_{k+1}(n)}}}=\lim_{n\to\infty }\gamma_{\varphi_{k+1}(n)}^{(k+1)}=0.$$

By the compact embedding of $Z_k$ into $Z_{k+1}$ again, there exist $w_{k+1}\in Z_{k+1}$ and a subsequence of $(\varphi_{k+1}(n))_{n=1}^\infty$, but still denoted by $(\varphi_{k+1}(n))_{n=1}^\infty$,  such that
\begin{equation*}%\label{w11}
    \lim_{n\to\infty}\|w_{\varphi_{k+1}(n)}^{(k+1)}-w_{k+1}\|_{Z_{k+1}}=0.
\end{equation*}
Thus, ($T_{k+1}$) holds.

If the recursion  does not stop at any step $k$, then  we have ($T_k$) holds for all $k\ge 1$. 
Let $\varphi(n)=\varphi_n(n)$ and   $V_n=v_{\varphi(n)}$ for $n\ge 1$.
We define positive numbers $\widetilde\Gamma_{k,n}$ and vectors $\widetilde w_n^{(k)}\in Z_{k-1}$ recursively as follows.

For $k=1$, define 
$$\widetilde\Gamma_{1,n}=\Gamma_{1,\varphi(n)}=\gamma_{\varphi(n)}^{(1)}\text{ and } \widetilde w_n^{(1)}=w_{\varphi(n)}^{(1)}\text{  for all }n\ge 1.$$

For $k\ge 2$, when $1\le n <k$ let
$$z_{k,n}=V_n-v-\sum_{j=1}^{k-1}\widetilde\Gamma_{j,n}w_j$$
and define 
\begin{equation}\label{GWsmall}
\left\{     \begin{aligned}
&\widetilde\Gamma_{k,n}=\|z_{k,n}\|_{Z_{k-1}}, \quad \widetilde w_n^{(k)}=\frac{1}{\widetilde\Gamma_{k,n}}z_{k,n} &&\text{ in the case }z_{k,n} \ne 0,\\
&\widetilde\Gamma_{k,n}=\frac1{2^{nk}} \widetilde\Gamma_{k-1,n}, \quad \widetilde w_n^{(k)}=0 &&\text{ in the case }z_{k,n} = 0;
\end{aligned}
\right.
\end{equation}
while when $n\ge k$, define
\begin{equation}\label{GWlarge}
\widetilde\Gamma_{k,n}=\Gamma_{k,\varphi(n)} \text{ and }
\widetilde w_n^{(k)}=w_{\varphi(n)}^{(k)}.
\end{equation}

Note that we still have the limit \eqref{wtil1}, while the limits \eqref{wtil4} and \eqref{wtil6} are true for all $j\ge 1$.
Set $N_k=k$ for all $k\ge 1$. For $j\ge 1$, by \eqref{GWlarge} and property \eqref{w5} of ($T_j$), we have \eqref{wtil5}  for all $n\ge N_j$.
Moreover, \eqref{wtil6} holds for all $j\ge 1$.

By \eqref{w7} and \eqref{GWlarge}, we obtain \eqref{wtil7} for $j\ge 1$, $n\ge N_j$.
By definition \eqref{GWsmall}, we also obtain \eqref{wtil7} for $n< N_j$. Thus, the identity \eqref{wtil7} holds true for all $j\ge 1$ and $n\ge 1$.
Therefore, we obtain Case \ref{d3} in Definition \ref{uexp} for the sequence $(V_n)_{n=1}^\infty$ 
with $\widetilde\Gamma_{k,n}$ replacing $\Gamma_{k,n}$ and  $\widetilde w_n^{(k)}$ replacing $w_n^{(k)}$.
\end{proof}

\begin{defn}\label{refinex}
Let  $\mathcal Z=(Z_k)_{k=0}^\infty$ be as in Definition \ref{uexp} and $(v_n)_{n=1}^\infty$ be a sequence in $Z_0$.
\begin{enumerate}[label=\tnum]
    \item   We say $(v_n)_{n=1}^\infty$ has a \emph{relaxed expansion} if it satisfies the same conditions in Definition \ref{uexp} except that the unit vector condition \eqref{b2}  is removed, and, 
    \begin{equation}\label{wKcond}
        w_K\ne 0\text{ in Case \ref{d2}.}
    \end{equation}

    \item\label{uni}   We say $(v_n)_{n=1}^\infty$ has a \emph{unitary expansion} if it has a relaxed expansion and in Case \ref{d2}, respectively, Case \ref{d3}, it requires that
    \begin{equation}\label{wkone}
        \|w_k\|_{Z_k}=1\text{ for all $1\le k\le K$, respectively,  $k\ge 1$.}
    \end{equation}
    
\item\label{degenx}  We say $(v_n)_{n=1}^\infty$ has a \emph{degenerate expansion} if it satisfies the same conditions as an infinite unitary expansion in part \ref{uni} except that the requirement \eqref{wkone} is replaced with the following: either 
    \begin{enumerate}[label=\rnum]
        \item $w_k=0$ for all $k\ge1$, or 
        \item there exists an integer $N\ge 1$ such that
            \begin{equation}\label{wktwo}
                \|w_k\|_{Z_k}=1\text{ for all $1\le k\le N$, and $w_k=0$ for all $k>N$.}
            \end{equation}
    \end{enumerate} 
\end{enumerate}  
\end{defn}

We will use the same notation in \eqref{vsum} to denote various expansions in Definition \ref{refinex} but will clearly specify its type before it is used.
In all definitions in Definition \ref{refinex}, it is clear that the existence of the numbers $N_k$ is removed.
Moreover, in the case of relaxed expansions, we still have the approximation property \eqref{aapr}, and hence these expansions are meaningful.

By using property \eqref{wKnonz} to verify condition \eqref{wKcond}, it is clear that 
\begin{equation}\label{srex}
    \text{any strict expansion in $\mathcal Z$ is also a relaxed expansion in $\mathcal Z$.}
\end{equation}

\begin{thm}\label{refinethm}
Assume a sequence $(v_n)_{n=1}^\infty$ has a relaxed expansion \eqref{vsum} in the sense of Definition \ref{refinex}.
Then there exists a subsequence $\tilde {\mathcal Z}=(\tilde Z_k)_{k=0}^\infty$ of  $\mathcal Z$ such that 
the sequence $(v_n)_{n=1}^\infty$ has either a unitary expansion or a degenerate expansion in $\tilde{\mathcal Z}$.
\end{thm}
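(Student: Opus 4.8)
The plan is to convert the given relaxed expansion into a unitary or degenerate one by two norm–preserving operations, the organizing observation being that the only obstruction to unitarity over $\mathcal Z$ itself is the vanishing of some limit vector $w_k$. Indeed, the axioms \eqref{b0}, \eqref{b1}, \eqref{b3}, \eqref{b4} of a relaxed expansion are invariant under rescaling a term with $w_k\neq 0$, that is, under replacing $(\Gamma_{k,n},w_k,w_n^{(k)})$ by $(c_k\Gamma_{k,n},\,w_k/c_k,\,w_n^{(k)}/c_k)$ for a positive constant $c_k$, since these are homogeneous identities or ratio limits; taking $c_k=\|w_k\|_{Z_k}$ normalizes $w_k$. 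The genuinely new feature of the infinite-dimensional setting, absent in the finite-dimensional case, is that a normalized remainder may converge to $0$ in the weaker norm, so that $w_k=0$ while its term is still present in \eqref{b4}; such an index cannot be normalized and must instead be discarded with the survivors relabeled, which is exactly what forces the passage to a subsequence $\tilde{\mathcal Z}$ of $\mathcal Z$.

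First I would set up the relabeling. Let $S=\{k\in\mathcal N:\ w_k\neq 0\}$ and enumerate $S=\{k_1<k_2<\cdots\}$, writing $m=|S|$ when $S$ is finite. The new $i$-th term is to come from the old index $k_i$, and the decisive choice is $\tilde Z_i=Z_{k_{i+1}-1}$ whenever $k_{i+1}$ exists, with $\tilde Z_i=Z_{k_i}$ for a last nonzero term. The reason is the identity obtained from \eqref{b4} together with $w_j=0$ for $j\notin S$,
\begin{equation*}
\Gamma_{k_i,n}\,w_n^{(k_i)}=v_n-v-\sum_{l<i}\Gamma_{k_l,n}\,w_{k_l},
\end{equation*}
which exhibits the combined remainder of the new $i$-th term as a positive multiple of $w_n^{(k_i)}\in Z_{k_i-1}$; taking $\tilde Z_{i-1}=Z_{k_i-1}$ places this remainder in the required space. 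Setting $\tilde w_i=w_{k_i}/\|w_{k_i}\|_{\tilde Z_i}$, $\tilde\Gamma_{i,n}=\Gamma_{k_i,n}\|w_{k_i}\|_{\tilde Z_i}$ and $\tilde w_n^{(i)}=w_n^{(k_i)}/\|w_{k_i}\|_{\tilde Z_i}$, I would verify the axioms in turn: \eqref{wkone} is immediate; the displayed identity is \eqref{b4}; \eqref{b0} and \eqref{b1} follow from \eqref{Gamze} and from writing $\Gamma_{k_{i+1},n}/\Gamma_{k_i,n}$ as a finite product of consecutive ratios, each tending to $0$ by \eqref{b1} of the original; and \eqref{b3} survives because $w_n^{(k_i)}\to w_{k_i}$ in $Z_{k_i}$ implies the same in the weaker norm of $\tilde Z_i=Z_{k_{i+1}-1}\supseteq Z_{k_i}$.

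The type of the resulting expansion is then read off from $S$. In Case \ref{d1} nothing is needed, as the trivial expansion is unitary and $\tilde{\mathcal Z}=\mathcal Z$. In Case \ref{d2}, $S$ is finite with largest element $k_m=K\neq0$ by \eqref{wKcond}, and the construction produces a finite unitary expansion; here the last term uses $w_n^{(K)}=w_K$ from \eqref{b6} to confirm \eqref{b6} for $\tilde w_n^{(m)}$. In Case \ref{d3} with $S$ infinite, the construction with $\tilde Z_i=Z_{k_{i+1}-1}$ for every $i$ yields an infinite unitary expansion. If instead $S=\emptyset$ in Case \ref{d3}, then every $w_k=0$ and the given expansion already meets the alternative ``$w_k=0$ for all $k$'' in the definition of a degenerate expansion, Definition \ref{refinex}\ref{degenx}, so again $\tilde{\mathcal Z}=\mathcal Z$. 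Finally, if $S=\{k_1<\cdots<k_m\}$ is finite and nonempty while the expansion is infinite (Case \ref{d3}), I would normalize the first $m$ terms as above, take $\tilde Z_m=Z_{k_m}$, and carry the infinite tail $k_m+1,k_m+2,\dots$ over unchanged as the zero terms $m+1,m+2,\dots$, setting $\tilde Z_{m+l}=Z_{k_m+l}$, $\tilde w_{m+l}=0$, $\tilde\Gamma_{m+l,n}=\Gamma_{k_m+l,n}$ and $\tilde w_n^{(m+l)}=w_n^{(k_m+l)}$; this is a degenerate expansion of the second alternative \eqref{wktwo} with $N=m$.

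The step I expect to be the main obstacle is the domain bookkeeping inside the relabeling. When zero-limit terms are skipped there is a gap $k_i>k_{i-1}+1$, and then the combined remainder $w_n^{(k_i)}$ lies only in the larger space $Z_{k_i-1}$, not in $Z_{k_{i-1}}$, so one cannot retain the old nested family; the subsequence choice $\tilde Z_{i-1}=Z_{k_i-1}$ is precisely what absorbs these remainders. The technical heart is then to confirm that, after this unavoidable weakening of the norms, the three surviving requirements persist simultaneously: the normalization $\|\tilde w_i\|_{\tilde Z_i}=1$, the convergence \eqref{b3} of $\tilde w_n^{(i)}$ to $\tilde w_i$ in $\tilde Z_i$, and the strict decay \eqref{b1} across the relabeled indices, including the transition from the last normalized term into the degenerate tail.
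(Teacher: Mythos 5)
Your proposal is correct, and its overall strategy — discard the zero-limit terms that are not part of a terminal tail, relabel, normalize the surviving terms, and sort the outcome into trivial/finite unitary/infinite unitary/degenerate according to the set $S=\{k:w_k\ne 0\}$ — is the same as the paper's (its Steps 1--3). Where you genuinely diverge is in the choice of the subsequence $\tilde{\mathcal Z}$ and hence in the one nontrivial verification. The paper deletes exactly the spaces $Z_k$ with $w_k=0$, so its new $(i{-}1)$-th space is the \emph{smaller} space $Z_{k_{i-1}}$; to place the combined remainder there it must prove the identity $\Gamma_{s,n}w_n^{(s)}=\Gamma_{s-1,n}w_n^{(s-1)}$ when $w_{s-1}=0$, which shows $w_n^{(k_i)}$ is a positive multiple of $w_n^{(k_{i-1}+1)}\in Z_{k_{i-1}}$. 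You instead take $\tilde Z_{i-1}=Z_{k_i-1}$, the \emph{largest} space in which the relaxed-expansion axioms already put $w_n^{(k_i)}$, so membership is automatic and no such identity is needed; the price is that your limit vectors are normalized in the weaker norms $\|\cdot\|_{Z_{k_{i+1}-1}}$ rather than $\|\cdot\|_{Z_{k_i}}$, and the resulting $\tilde{\mathcal Z}$ and expansion differ from the paper's (which is harmless, since the theorem only asserts existence and, as Example \ref{denexam} notes, such expansions need not be unique). Both routes are valid; yours trades the paper's small algebraic lemma for a slightly more careful index bookkeeping, which you carry out correctly, including the two points that most often go wrong here: the last-term condition \eqref{b6} in Case \ref{d2} (where $k_m=K$ by \eqref{wKcond}) and the transition from the last normalized term into the zero tail in the degenerate case \eqref{wktwo}. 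The only loose end is cosmetic: in the finite cases you specify only finitely many $\tilde Z_i$, whereas $\tilde{\mathcal Z}$ is declared as an infinite subsequence, so you should append, say, $\tilde Z_{m+l}=Z_{k_m+l}$; these extra spaces play no role in Definition \ref{uexp}\ref{d2}.
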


The main idea is a restructuring process where we remove certain zero terms (those not in a tail of zeros) and normalize the nonzero terms other than $v$. We provide a detailed proof in the Appendix.
\hilite{We emphasize that the subsequence $\tilde {\mathcal Z}$ and the expansion in $\tilde {\mathcal Z}$ may not be unique, even though they are unique if they are obtained via the restructuring process (see Example \ref{denexam} below).}

\begin{obs}\label{degenrmk}
Assume \eqref{vsum} is a degenerate expansion in $\mathcal Z=(Z_k)_{k=1}^\infty$.
For convenience, denote $w_0=v$, $\Gamma_{0,n}=1$, and, in Definition \ref{refinex}\ref{degenx}(a), let $N=0$.
Then we have $N\ge 0$ for both (a) and (b) of Definition \ref{refinex}\ref{degenx}.
Define the remainders
\begin{equation}\label{Rmn}
    R_{m,n}=v_n-\sum_{k=0}^m \Gamma_{k,n} w_k\text{ for }m\ge 0,n\ge 1.
\end{equation}

    Let $m\ge N$. For $n\ge 1$, we have
    $$\|R_{N,n}\|_{Z_{m+1}}=  \|R_{m,n}\|_{Z_{m+1}} =\Gamma_{m+1,n}\|w_n^{(m+1)}\|_{Z_{m+1}}.$$    
It follows that 
\begin{equation}\label{denremest}
\frac{\|R_{N,n}\|_{Z_{m+1}}}{\Gamma_{m+1,n}}=\|w_n^{(m+1)}\|_{Z_{m+1}}\to \|w_{m+1}\|_{Z_{m+1}}=0 \text{ as }n\to\infty.
\end{equation}  
\end{obs}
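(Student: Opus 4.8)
The plan is to read off the two displayed identities in the remark as direct consequences of the defining relation \eqref{b4} of the infinite expansion, and then to pass to the limit using the convergence property \eqref{b3}. No compactness or genuinely new estimate is needed; once the degeneracy convention is set up, the argument is pure bookkeeping.

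First I would record that the conventions $w_0=v$ and $\Gamma_{0,n}=1$ turn \eqref{b4}, taken with index $k=m+1$, into $v_n=\sum_{j=0}^{m}\Gamma_{j,n}w_j+\Gamma_{m+1,n}w_n^{(m+1)}$, valid for all $n\ge 1$ and $m\ge 0$. Comparing this with the definition \eqref{Rmn} of $R_{m,n}$ gives the pointwise identity $R_{m,n}=\Gamma_{m+1,n}w_n^{(m+1)}$, from which the second equality of the remark follows by taking $Z_{m+1}$-norms and using $\Gamma_{m+1,n}>0$. For the first equality I would use that a degenerate expansion has $w_k=0$ for all $k>N$; this is exactly where the convention $N\ge 0$ (with $N=0$ in case Definition \ref{refinex}\ref{degenx}(a)) is used, so that cases (a) and (b) are handled uniformly. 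Then for $m\ge N$ every term of index $k>N$ in $\sum_{k=0}^m\Gamma_{k,n}w_k$ vanishes, so $R_{m,n}=R_{N,n}$ as vectors in $Z_0$, and in particular their $Z_{m+1}$-norms coincide.

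Finally, for the limit \eqref{denremest} I would invoke \eqref{b3}, which in Case \ref{d3} holds for every $k\ge 1$; with $k=m+1$ it reads $\|w_n^{(m+1)}-w_{m+1}\|_{Z_{m+1}}\to 0$, so the reverse triangle inequality yields $\|w_n^{(m+1)}\|_{Z_{m+1}}\to\|w_{m+1}\|_{Z_{m+1}}$ as $n\to\infty$. Since $m\ge N$ forces $m+1>N$, the degeneracy gives $w_{m+1}=0$ and hence this limit is $0$. Dividing the identity $R_{m,n}=\Gamma_{m+1,n}w_n^{(m+1)}$ by $\Gamma_{m+1,n}$ and combining with the first equality produces \eqref{denremest}. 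There is no real obstacle here; the only point requiring a little care, rather than a difficulty, is the unification of cases (a) and (b) through the convention $N\ge 0$, which guarantees that the tail $w_k=0$ ($k>N$) holds and therefore that the partial sums stabilize beyond index $N$.
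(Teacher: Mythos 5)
Your proposal is correct and follows essentially the same route as the paper: both arguments read off $R_{m,n}=\Gamma_{m+1,n}w_n^{(m+1)}$ from \eqref{b4}, use the vanishing tail $w_k=0$ for $k>N$ to identify $R_{m,n}$ with $R_{N,n}$, and pass to the limit via \eqref{b3}. No issues.
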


\subsection{A consequence for a single normed space}\label{xsingle}

We recall different types of asymptotic expansions for a normed space from \cite{FHJ}.

\begin{defn}\label{uexp2}
Let $(v_n)_{n=1}^\infty$ be a sequence in a normed space $(Z,\|\cdot\|_Z)$ over $\mathbb C$ or $\mathbb R$. 
Define $(Z_k,\|\cdot\|_{Z_k})=(Z,\|\cdot\|_Z)$ for any integer $k\ge 0$, and 
\begin{equation}\label{sameZ}
    \mathcal Z=((Z_k,\|\cdot\|_{Z_k}))_{k=0}^\infty =((Z,\|\cdot\|))_{k=0}^\infty.
\end{equation}

     We say the sequence $(v_n)_{n=1}^\infty$ has a \emph{strict unitary expansion},
    respectively,
    \emph{unitary expansion, degenerate expansion}, 
    in $Z$ if it has a strict expansion in the sense of Definition \ref{uexp},
    respectively,
    unitary expansion, degenerate expansion,  in the sense of Definition \ref{refinex}, in $\mathcal Z$ defined by \eqref{sameZ}.
\end{defn}

Definition \ref{uexp2} of the strict unitary expansion in $Z$ agrees with that in \cite[Definition 4.1]{FHJ}. Indeed, from \eqref{b3} and \eqref{b2} with $Z_{k-1}=Z_k=Z$ we have $\|w_k\|_Z=1$ which is required in \cite[Definition 4.1]{FHJ}.
Similarly, the definitions of unitary expansions in $Z$ given in Definition \ref{uexp2} above  are the same as those in   \cite[Definition 6.1]{FHJ}.

\begin{cor}\label{maincor}
Let $\mathcal Z=((Z_k,\|\cdot\|_{Z_k}))_{k=0}^\infty$  be as in Theorem \ref{mainlem}, and $(Z_\infty,\|\cdot\|_{Z_\infty})$ be a normed space over $\mathbb K$ such that
 all $Z_k\subset Z_\infty$ with continuous embeddings. 
Then any convergent sequence in $Z_0$ has a subsequence that possesses either a unitary expansion or a degenerate expansion in both 
\begin{enumerate}[label=\rnum]
    \item\label{caseq}  a subsequence $\tilde{\mathcal Z}$  of $\mathcal Z$, and
    \item\label{casingle} space $Z_\infty$.
\end{enumerate}
\end{cor}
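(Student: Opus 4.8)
The plan is to chain the three abstract results already in hand and then perform a single rescaling to descend to the space $Z_\infty$. Given a sequence $(v_n)_{n=1}^\infty$ converging in $Z_0$, Theorem \ref{mainlem} provides a subsequence with a strict expansion in $\mathcal Z$; by \eqref{srex} this is also a relaxed expansion, and Theorem \ref{refinethm} then furnishes a subsequence $\tilde{\mathcal Z}=(\tilde Z_k)_{k=0}^\infty$ of $\mathcal Z$ in which this (sub)sequence has either a unitary or a degenerate expansion. Since Theorem \ref{refinethm} only thins the family of spaces, the sequence itself is not subsequenced again, so part \ref{caseq} is immediate.

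For part \ref{casingle} I would carry the expansion obtained in $\tilde{\mathcal Z}$ over to the constant family $(Z_\infty)_{k\ge 0}$ of Definition \ref{uexp2}, noting first that each $\tilde Z_k$ is one of the $Z_k$ and hence embeds continuously into $Z_\infty$. The gauge conditions \eqref{b0}, \eqref{b1} and the defining identity \eqref{b4} involve only scalars and the ambient vector space, so they transfer verbatim. The convergence \eqref{b3} transfers thanks to the continuity of each embedding $\tilde Z_k\subset Z_\infty$, which gives $\|w_n^{(k)}-w_k\|_{Z_\infty}\le C_k\|w_n^{(k)}-w_k\|_{\tilde Z_k}\to 0$. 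The one genuine discrepancy is the normalization: the expansion in $\tilde{\mathcal Z}$ only guarantees $\|w_k\|_{\tilde Z_k}=1$, whereas a unitary (or degenerate) expansion in $Z_\infty$ demands $\|w_k\|_{Z_\infty}=1$.

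I would repair this by rescaling. For each $k$ with $w_k\ne 0$ put $c_k=\|w_k\|_{Z_\infty}$; since $Z_k\subset Z_\infty$ is a genuine inclusion and $\|w_k\|_{\tilde Z_k}=1$, the element $w_k$ is nonzero in $Z_\infty$, so by positive-definiteness $c_k>0$. Setting $\hat w_k=w_k/c_k$, $\hat w_n^{(k)}=w_n^{(k)}/c_k$ and $\hat\Gamma_{k,n}=c_k\Gamma_{k,n}$ leaves every product $\Gamma_{k,n}w_k=\hat\Gamma_{k,n}\hat w_k$ and $\Gamma_{k,n}w_n^{(k)}=\hat\Gamma_{k,n}\hat w_n^{(k)}$ unchanged, so \eqref{b4} persists; moreover $\|\hat w_k\|_{Z_\infty}=1$, $\|\hat w_n^{(k)}-\hat w_k\|_{Z_\infty}\to 0$, and the ordering survives because the $c_k$ are fixed positive constants: $\hat\Gamma_{k+1,n}/\hat\Gamma_{k,n}=(c_{k+1}/c_k)(\Gamma_{k+1,n}/\Gamma_{k,n})\to 0$ and $\hat\Gamma_{1,n}=c_1\Gamma_{1,n}\to 0$. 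In the degenerate case one rescales only the finitely many indices $k\le N$ with $w_k\ne 0$ and, for $k>N$, keeps $\hat\Gamma_{k,n}=\Gamma_{k,n}$, $\hat w_n^{(k)}=w_n^{(k)}$; the vanishing terms drop out of \eqref{b4} and the ratio across $k=N$ still tends to zero. This yields a unitary (resp.\ degenerate) expansion in $Z_\infty$, proving part \ref{casingle}.

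The conceptual content is slight; the main thing to watch is that the single choice $\hat\Gamma_{k,n}=c_k\Gamma_{k,n}$ simultaneously restores the $Z_\infty$-normalization of all the $w_k$, preserves the fast-decay ordering of the gauges, and is compatible with the tail of zero terms in the degenerate case, where no rescaling is available. Everything else reduces to the one-line continuity estimate for $\tilde Z_k\subset Z_\infty$.
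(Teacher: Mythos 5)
Your proposal is correct and follows essentially the same route as the paper: Theorem \ref{mainlem} plus \eqref{srex} plus Theorem \ref{refinethm} for part (i), then transporting the expansion to the constant family $(Z_\infty)_{k\ge 0}$ via the continuous embeddings and renormalizing by $c_k=\|w_k\|_{Z_\infty}$, which is exactly the normalizing process \eqref{normalize} of Step~3 in the proof of Theorem \ref{refinethm} that the paper invokes. Your explicit verification that the rescaling preserves \eqref{b0}, \eqref{b1}, \eqref{b4} and is compatible with the zero tail in the degenerate case merely spells out what the paper leaves as a reference.
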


The proof of Corollary \ref{maincor} is presented in the Appendix.

\begin{eg}\label{denexam} 
Recall the orthonormal basis $(\varphi_n)_{n=1}^\infty$ of $H$.
For $n\in\mathbb N$, let 
$$v_n=\frac{1}{e^{n^2}}\sum_{k=1}^\infty e^{-kn}\varphi_k.$$    
Then $v_n\in H$ with 
\begin{align*}
|v_n|
&=\frac{1}{e^{n^2+n}}\left|\sum_{k=1}^\infty e^{-(k-1)n}\varphi_k\right|=\frac{1}{e^{n^2+n}(1-e^{-2n})^{1/2}}\to 0\text{ as } n\to\infty.
\end{align*}

\begin{enumerate}[label=\rnum]
    \item Let $v=0$,  $w_k=\varphi_k$, $\Gamma_{k,n}=e^{-kn-n^2}$ and, similar to \eqref{wnkfin}, 
    $$w_n^{(k)}=w_k+\sum_{j=k+1}^\infty \frac{\Gamma_{j,n}}{\Gamma_{k,n}}\varphi_k
    =w_k+\sum_{j=k+1}^\infty e^{-(j-k)n}\varphi_k$$ for $k,n\in\mathbb N$.
One can verify that $w_n^{(k)}\in H$ and 
$$|w_n^{(k)}-w_k|=\frac{e^{-n}}{(1-e^{-2n})^{1/2}}\to 0\text{ as }n\to\infty.$$ 
Thus one obtains the unitary expansion
\begin{equation}\label{strang1}
    v_n\approx 0+\sum_{k=1}^\infty \Gamma_{k,n}w_k \text{ in }H.
\end{equation}

    \item Let $v=0$,  $w_k=0$, $\Gamma_{k,n}=e^{-kn}$ and $w_n^{(k)}=v_n e^{nk}$ for $k,n\in\mathbb N$.
Let $k\in\mathbb N$ be given. We have
$$|w_n^{(k)}|=|v_n| e^{nk}=\frac{e^{nk}}{e^{n^2+n}(1-e^{-2n})^{1/2}} \to 0 \text{ as } n\to\infty.$$
Thus, $w_n^{(k)}\to 0=w_k$ in $H$.
Moreover,
$$ v+\sum_{j=1}^{k-1}\Gamma_{j,n}w_j +\Gamma_{k,n} w_n^{(k)}=\Gamma_{k,n} w_n^{(k)}=v_n.$$
Therefore, we have the degenerate expansion
\begin{equation}\label{strang2}
v_n\approx 0+\sum_{k=1}^\infty \Gamma_{k,n}\cdot 0 \text{ in }H.
\end{equation}
\end{enumerate}

Both expansions \eqref{strang1} and \eqref{strang2} can occur simultaneously because we did not assume that they were obtained from a relaxed expansion via the restructuring process in Theorem \ref{refinethm}.
Note that $v_n$ has the degenerate expansion \eqref{strang2} with all zero terms, but it does not have a trivial unitary expansion $v_n=0$.
\end{eg}

\section{Steady states of the Navier--Stokes equations} \label{results}

Consider a family of the steady state equations corresponding to  \eqref{sc} with $G=\alpha_n>0$ for $n\in\mathbb N$, that is,
\begin{equation}\label{steady}
Av_n+ \alpha_n B(v_n,v_n)=g.
\end{equation}
For the sake of generality, we drop the norm requirement \eqref{g1} and let $g$ be  a fixed function in $H\setminus\{0\}$. Note in the case $|g|\ne 1$ that the actual Grashof number for equation \eqref{steady} is $\alpha_n|g|$.

Assume that $\alpha_n\to\infty$ as $n\to\infty$.
For $n\in \mathbb N$, let $v_n\in V$ be a weak solution of  \eqref{steady}.
It is well-known that $v_n\in D(A)$ for all $n$.

\subsection{The 2D periodic case} \label{2Dappln}
Consider the 2D periodic case. 
Since $v_n\in D(A)$, taking the inner product in $H$ of \eqref{steady} with $Av_n$ and using \eqref{BAzero}, one obtains \begin{equation}\label{DAbound}
    |Av_n|\le |g|.
\end{equation}
Hence, $(v_n)_{n=1}^\infty$ is a bounded sequence in $D(A)$.
By the compact embedding of $D(A)$ into $V$, there exists $v\in V$ and a subsequence of $(v_n)_{n=1}^\infty$, still denoted by $(v_n)_{n=1}^\infty$, such that 
\begin{equation}\label{vlim2}
    v_n\to v\text{ in }V.
\end{equation}

Let $v$ and $(v_n)_{n=1}^\infty$ be any such pair for the rest of this subsection.

\begin{thm}\label{2Dxp}
One has the following.
\begin{enumerate}[label=\tnum]
    \item Given any strictly decreasing sequence $(s_k)_{k=0}^\infty$ of numbers in the interval $(1/2,1)$, set $Z_k=D(A^{s_k})$ and $\mathcal Z=(Z_k)_{k=0}^\infty$.
Then there is a subsequence of $(v_n)_{n=1}^\infty$ that has a strict expansion in $\mathcal Z$.    

\item Let $s$ be any number in the interval $[1/2,1)$.
Then there is a subsequence of $(v_n)_{n=1}^\infty$ that has a unitary expansion or degenerate expansion in $D(A^s)$.    
\end{enumerate}
\end{thm}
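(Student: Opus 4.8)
The plan is to reduce both parts to the abstract machinery of Section~\ref{nestedsec}: part~(i) to Theorem~\ref{mainlem}, and part~(ii) to Corollary~\ref{maincor}. In each case the argument splits into two steps: first, verifying that the chosen fractional domains form a nested family with compact embeddings so that the hypotheses of those results apply; second, upgrading the convergence $v_n\to v$ from $V$ to the strongest space $Z_0$ of the family, which is what the abstract results require. The only genuinely NSE-specific ingredient is the uniform bound \eqref{DAbound}, which itself rests on the 2D periodic identity \eqref{BAzero}.

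For part~(i), since $(s_k)_{k=0}^\infty$ is strictly decreasing with values in $(1/2,1)$, we have $s_0>s_1>\dots$, and hence, by the compactness of the embedding $D(A^\alpha)\hookrightarrow D(A^\beta)$ for $\alpha>\beta\ge 0$ recalled at the end of Section~\ref{prelim}, the family $\mathcal Z=(D(A^{s_k}))_{k=0}^\infty$ satisfies \eqref{nested} with all embeddings compact. To invoke Theorem~\ref{mainlem} I must first produce a subsequence of $(v_n)$ that converges in $Z_0=D(A^{s_0})$. This is exactly where \eqref{DAbound} enters: $(v_n)$ is bounded in $D(A)$, and since $s_0<1$ the embedding $D(A)\hookrightarrow D(A^{s_0})$ is compact, so some subsequence converges in $D(A^{s_0})$. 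Because $D(A^{s_0})\hookrightarrow V$ continuously and $v_n\to v$ in $V$ by \eqref{vlim2}, the limit of this subsequence in $D(A^{s_0})$ must coincide with $v$. Theorem~\ref{mainlem} then yields a further subsequence possessing a strict expansion in $\mathcal Z$.

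For part~(ii), fix $s\in[1/2,1)$ and choose a strictly decreasing sequence $(s_k)_{k=0}^\infty$ in $(1/2,1)$ with $s_k>s$ for every $k$; for instance $s_k=s+(1-s)2^{-k-1}$, which lies in $(s,1)\subset(1/2,1)$ and decreases to $s$. Put $Z_k=D(A^{s_k})$, so that $\mathcal Z=(Z_k)_{k=0}^\infty$ is again a nested family with compact embeddings, and set $Z_\infty=D(A^s)$. Since $s_k>s$, each $D(A^{s_k})$ embeds continuously into $D(A^s)$, so the hypotheses of Corollary~\ref{maincor} hold with this $Z_\infty$. Exactly as in part~(i), a subsequence of $(v_n)$ converges in $Z_0=D(A^{s_0})$ to $v$, so Corollary~\ref{maincor} supplies a further subsequence possessing either a unitary or a degenerate expansion in $Z_\infty=D(A^s)$, which is the claim.

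I expect the only real obstacle to be the convergence upgrade: the abstract theorems demand convergence in the top space $Z_0=D(A^{s_0})$ with $s_0>1/2$, whereas \eqref{vlim2} only provides convergence in $V=D(A^{1/2})$. The 2D periodic structure resolves this through \eqref{DAbound}, which bounds $(v_n)$ uniformly in $D(A)$ and thereby makes it relatively compact in every $D(A^\sigma)$ with $\sigma<1$; this is precisely the feature that is unavailable in the 3D and 2D no-slip cases. The remaining points, namely identifying the limit as $v$ by uniqueness against the $V$-limit and threading the successive nested subsequences, are routine.
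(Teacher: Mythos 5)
Your proof is correct and follows essentially the same route as the paper: use the 2D bound \eqref{DAbound} to get relative compactness in $Z_0=D(A^{s_0})$, identify the limit with the $V$-limit $v$, and then invoke Theorem \ref{mainlem} for part (i) and Corollary \ref{maincor} (with $s_k>s$ and $Z_\infty=D(A^s)$) for part (ii). Your explicit choice $s_k=s+(1-s)2^{-k-1}$ and the limit-identification via the continuous embedding $D(A^{s_0})\hookrightarrow V$ are both fine and match the paper's argument.
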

\begin{proof}
(i) Let $Z_*=D(A)$. By \eqref{DAbound} and the compact embedding of $Z_*$ into $Z_0$, there is a subsequence of $(v_n)_{n=1}^\infty$, still denoted by $(v_n)_{n=1}^\infty$, such that 
\begin{equation}\label{vlim1}
    v_n\to v\text{ in }Z_0.
\end{equation}
(Note that the limits in \eqref{vlim1} and  \eqref{vlim2} are the same $v$ thanks to the continuous embedding $V\subset Z_0$.)
Then we apply Theorem \ref{mainlem}.

(ii) Choose $(s_k)_{k=0}^\infty$ as in part (i) with $s_k>s$ for all $k\ge 0$. After having a subsequence of $(v_n)_{n=1}^\infty$, still denoted by $(v_n)_{n=1}^\infty$, with $v_n\to v$ in $Z_0$, we apply Corollary \ref{maincor}\ref{casingle} to the normed space $Z_\infty=D(A^s)$.
\end{proof}

Selecting $s=1/2$ and applying Theorem \ref{2Dxp}(ii), we have a unitary or degenerate expansion
\begin{equation}\label{Vx}
v_n\approx v+\sum_{k\in\mathcal N} \Gamma_{k,n}w_k \text{ in } V.    
\end{equation}
\hilite{Next, we find more information about $v$ and $w_k$ in \eqref{Vx}.

\medskip\noindent \textit{Heuristic arguments.} }
We formally write \eqref{Vx} as
\begin{equation}\label{infser}
v_n=v+\Gamma_{1,n}w_1+\Gamma_{2,n}w_2+\cdots+
\Gamma_{k,n}w_k+\cdots,
\end{equation}
where the sum can be without $w_k$, or finite  with $1\le k\le K$, or infinite with $k\in \mathbb N$.
Substituting \eqref{infser} into equation \eqref{steady}, one formally obtains 
\begin{equation}\label{sssub}
\begin{aligned}
&\alpha_n B(v,v)+ (Av-g) +\Gamma_{1,n}Aw_1+ \cdots +
\Gamma_{k,n}Aw_k + \cdots \\
&+\alpha_n\Gamma_{1,n}\Bs(v,w_1) + \cdots +
\alpha_n\Gamma_{k,n}\Bs(v,w_k) + \cdots \\
&+\alpha_n\Gamma_{1,n}\Gamma_{1,n}B(w_1,w_1)
+ \cdots + \sum_{j=1}^k
  \alpha_n\Gamma_{j,n}\Gamma_{k-j,n}
B(w_j,w_{k-j}) + \cdots = 0  .
\end{aligned}
\end{equation}
The sequences of coefficients in \eqref{sssub} are
\begin{equation}\label{coeffdef}
\begin{aligned}
& (\alpha_{n})_{n=1}^{\infty},\quad (1)_{n=1}^{\infty},
\quad (\Gamma_{k,n})_{n=1}^{\infty},\quad 
 (\alpha_{n}\Gamma_{k,n})_{n=1}^{\infty} 
 \text{ for } k\in \mathcal N,\\
&  (\alpha_{n}\Gamma_{j,n}\Gamma_{k,n})_{n=1}^{\infty}
 \text{ for } k,j\in \mathcal N, \ k \ge j.
\end{aligned}
\end{equation}
The idea is to compare the sequences in \eqref{coeffdef} and collect the equivalent ones in equation \eqref{sssub} to write down equations that relate $v$ and $w_k$. In some cases, this idea can be rigorously justified such as in Theorem \ref{thm1} below. In other cases, it can only be applied after some modifications  such as in Assumption \ref{chiassum} and Theorem \ref{v0eg}\ref{cas2} below.

\medskip
We return to our rigorous treatment now.
Following \cite{FHJ} we compare the sequences in \eqref{coeffdef} as $n\to\infty$ in the following way.

\begin{defn} \label{deford} 
Let $\mathcal X$ be the collection of all sequences of positive numbers. 
Given two sequences $\xi=(\xi_n)_{n=1}^{\infty}$, 
$\eta=(\eta_n)_{n=1}^{\infty}$ in $\mathcal X$, we
write  $\xi \succ\eta$, if $\xi_n/\eta_n \to \infty$ as $n \to
\infty$, and  $\xi \sim \eta$ if  $\xi_n/\eta_n \to \lambda$, for some
$\lambda \in (0,\infty)$.  We write $\xi\succsim \eta$ if either  $\xi\succ\eta$ or $\xi \sim\eta$. 

A subset $X$ of $\mathcal X$ is called \emph{totally comparable} if it holds for any $\xi,\eta\in X$ that $\xi\sim \eta$ or $\xi\succ \eta$ or $\eta\succ \xi$.
\end{defn}
Clearly, the relation $\sim $ in Definition \ref{deford} is an equivalence relation on $\mathcal X$ but $\succsim$ is not an order on $\mathcal X$.
For convenience, we use the short-hand notation.
$$\xi_n \sim \eta_n,\text{ respectively, } \xi_n  \succ \eta_n,$$ for sequences $\xi=(\xi_n)_{n=1}^\infty$ and $\eta=(\eta_n)_{n=1}^\infty$ in $\mathcal X$, to mean 
$\xi\sim \eta$, respectively, 
$\xi\succ \eta$.

Let $(\varphi(n))_{n=1}^\infty$ be a subsequence of $(n)_{n=1}^\infty$.
For $X\subset \mathcal X$, define
$$X_\varphi=\left \{ (\xi_{\varphi(n)})_{n=1}^\infty \text{ with } (\xi_n)_{n=1}^\infty \in X\right \}.$$ 
We call $X_\varphi$ a \emph{subsequential set} of $X$.

Note that if $X$ is totally comparable, then so is the subsequential set $X_\varphi$.

Let $\mathcal S$ denote the set of sequences in \eqref{coeffdef}. 
For relations among the sequences in $\mathcal S$, we reproduce Table (3.6) in \cite{FHJ}.

\begin{equation}\label{seqarray}
\begin{array}{cccccccccc}
1    &\succ &\Gamma_{1,n} &\succ &\Gamma_{2,n} &\succ \cdots  &\succ
 &\Gamma_{k,n} &\succ \cdots  \\
\succup & & \succup & & \succup & & & \succup\\
\alpha_n&\succ &\alpha_n\Gamma_{1,n} &\succ &\alpha_n\Gamma_{2,n} &\succ \cdots  &\succ
&\alpha_n\Gamma_{k,n} &\succ \cdots \\
& & \succdn & & \succdn & & & \succdn \\
            & &  \alpha_n\Gamma_{1,n}\Gamma_{1,n} &\succ &\alpha_n\Gamma_{1,n}\Gamma_{2,n} &\succ \cdots &\succ
&\alpha_n\Gamma_{1,n}\Gamma_{k,n} &\succ \cdots \\
& &  & & \succdn & & & \succdn \\
 &  & & &\alpha_n\Gamma_{2,n}\Gamma_{2,n} &\succ \cdots  &\succ &\alpha_n\Gamma_{2,n}\Gamma_{k,n} &\succ \cdots \\
& &  & &  & & & \succdn \\
& & & & & \ddots   & & \vdots \\
& &  & &  & & & \succdn \\
 &  & & & &  & & \alpha_n\Gamma_{k,n}\Gamma_{k,n} &\succ \cdots 
\end{array}
\end{equation}

The relations in \eqref{seqarray} do not guarantee that $\mathcal S$ is totally comparable.
However, it follows the fact $\mathcal S$ is countable and 
 \cite[Lemma 3.1]{FHJ} that  $\mathcal S$ has a subsequential set that is totally comparable.
Together with property \eqref{vsubsum}, we assume the set $\mathcal S$ itself  is totally comparable. (More properties of the set $\mathcal S$ and its associated ordinal numbers were studied in \cite{FHJ}.)

\begin{thm}\label{thm1} 
\hilite{Assume \eqref{steady} and \eqref{Vx}.}  We have the following.
   \begin{enumerate}[label=\tnum]
       \item\label{p0}  The function $v$ satisfies
        \begin{equation}\label{Bzero}
        B(v,v)=0.
        \end{equation}

       \item\label{p1} If \eqref{Vx} is a trivial unitary expansion,  then 
       \begin{equation}\label{Avg}
           Av=g.
       \end{equation}
       
       \item\label{p2} If \eqref{Vx} is not a trivial unitary expansion, then we have the following cases.
   \begin{enumerate}[label=\rnum]
       \item If $1\succsim \alpha_n\Gamma_{1,n}$, then 
       \begin{equation*}%\label{rel2}
           Av+\mu\Bs(v,w_1)=g, \text{ where }\mu=\lim_{n\to\infty}\alpha_n\Gamma_{1,n}\in[0,\infty).
       \end{equation*}

       \item If $\alpha_n\Gamma_{1,n}\succ 1$, then 
       \begin{equation*}%\label{rel1}
           \Bs(v,w_1)=0.
       \end{equation*}
    \end{enumerate}
 \end{enumerate}
\end{thm}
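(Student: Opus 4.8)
The plan is to run everything through the dual space $V'$, where both $A$ and $B$ act continuously: by \eqref{Auu} the map $A\colon V\to V'$ is a bounded isometry, and $B\colon V\times V\to V'$ is continuous bilinear. Hence the established convergence $v_n\to v$ in $V$ already gives $Av_n\to Av$ and $B(v_n,v_n)\to B(v,v)$ in $V'$, and more generally $w_n^{(1)}\to w_1$ in $V$ (as supplied by \eqref{Vx}) yields $\Bs(v,w_n^{(1)})\to\Bs(v,w_1)$ and boundedness of $B(w_n^{(1)},w_n^{(1)})$ in $V'$. These continuity facts are the only analytic input; the rest is order-of-magnitude bookkeeping.

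For part \ref{p0} I would not even need the expansion. Dividing \eqref{steady} by $\alpha_n$ gives
\[
\frac{1}{\alpha_n}Av_n + B(v_n,v_n) = \frac{1}{\alpha_n}g ,
\]
and letting $n\to\infty$ in $V'$: since $\alpha_n\to\infty$ and $Av_n$ stays bounded, the first and third terms vanish while the middle converges to $B(v,v)$, forcing \eqref{Bzero}. Part \ref{p1} is then immediate, since a trivial expansion means $v_n=v$, so \eqref{steady} reads $Av+\alpha_n B(v,v)=g$ and \eqref{Bzero} collapses it to $Av=g$.

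For part \ref{p2} the idea is to substitute $v_n = v + \Gamma_{1,n}w_n^{(1)}$ (legitimate because the expansion is nontrivial, so $\Gamma_{1,n}>0$ and $w_n^{(1)}\to w_1$ in $V$) into \eqref{steady}, expand the nonlinearity, and cancel $B(v,v)$ using \eqref{Bzero}, obtaining
\[
(Av-g) + \Gamma_{1,n}Aw_n^{(1)} + \alpha_n\Gamma_{1,n}\Bs(v,w_n^{(1)}) + \alpha_n\Gamma_{1,n}^2 B(w_n^{(1)},w_n^{(1)}) = 0 \quad\text{in } V' .
\]
The whole argument hinges on the factorization $\alpha_n\Gamma_{1,n}^2=(\alpha_n\Gamma_{1,n})\Gamma_{1,n}$, which makes the quadratic-in-$w$ term subdominant in both regimes. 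In case (a), the hypothesis $1\succsim\alpha_n\Gamma_{1,n}$ guarantees that $\mu:=\lim\alpha_n\Gamma_{1,n}\in[0,\infty)$ exists, and I pass directly to the limit: the second term vanishes ($\Gamma_{1,n}\to0$), the fourth vanishes ($\alpha_n\Gamma_{1,n}^2\to\mu\cdot0=0$), and the third tends to $\mu\,\Bs(v,w_1)$, giving $Av+\mu\,\Bs(v,w_1)=g$. In case (b), where $\alpha_n\Gamma_{1,n}\to\infty$, I would instead divide the identity by $\alpha_n\Gamma_{1,n}$ before taking the limit: the first term becomes $(Av-g)/(\alpha_n\Gamma_{1,n})\to0$, the second becomes $Aw_n^{(1)}/\alpha_n\to0$, the fourth becomes $\Gamma_{1,n}B(w_n^{(1)},w_n^{(1)})\to0$, and only $\Bs(v,w_n^{(1)})\to\Bs(v,w_1)$ survives, forcing $\Bs(v,w_1)=0$.

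I do not expect a genuine obstacle here: each step is routine once the $V'$ setting is fixed. The only points demanding care are choosing the correct normalizing power in each regime so that exactly one nonconstant group of terms persists, and recognizing that the nonlinear term $\alpha_n\Gamma_{1,n}^2 B(w_n^{(1)},w_n^{(1)})$ is always asymptotically negligible — this is precisely why the limiting relations between $v$ and $w_1$ end up \emph{linear}. The single mildly delicate verification is that all limits are taken in the $V'$-norm, so that the continuity of $B$ on $V\times V$ genuinely applies.
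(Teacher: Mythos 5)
Your proof is correct and follows essentially the same route as the paper: divide \eqref{steady} by $\alpha_n$ and pass to the limit in $V'$ to get \eqref{Bzero}, substitute $v_n=v+\Gamma_{1,n}w_n^{(1)}$ and cancel $B(v,v)$ for part (iii), then either take the limit directly in case (a) or first divide by $\alpha_n\Gamma_{1,n}$ in case (b). The only difference is cosmetic: your intermediate identity retains the term $\Gamma_{1,n}Aw_n^{(1)}$, which the paper's displayed equation \eqref{se1} omits, but since that term vanishes in $V'$ in both regimes the conclusions are unaffected either way.
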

\begin{proof} 
(i) Dividing \eqref{steady} by $\alpha_n$, taking the limit in $V'$ as $n\to\infty$ with the use  \hilite{of} the facts $v_n\to v$ in $V$, 
the operator $A$ is continuous from $V$ to $V'$, and the bilinear form $B$ is continuous from $V\times V$ to $V'$, we obtain \eqref{Bzero}.

(ii)  With $v_n=v$ and \eqref{Bzero}, we immediately have \eqref{Avg} from \eqref{steady}.

(iii) In both cases of nontrivial unitary expansion and degenerate expansion, we substitute the expression $v_n=v+\Gamma_{1,n}w_n^{(1)}$ into the steady state equation \eqref{steady} and use property \eqref{Bzero} to obtain
    \begin{equation}\label{se1}
 Av-g + \alpha_n \Gamma_{1,n} \Bs(v,w_n^{(1)}) 
    + \alpha_n\Gamma_{1,n}^2B(w_n^{(1)}, w_n^{(1)})=0  .       
    \end{equation}

For (a) we can simply take the limit in $V'$ as $n\to \infty$ in \eqref{se1}, to obtain 
$$ \Bs(v,w_n^{(1)})\to \Bs(v,w_1),\quad B(w_n^{(1)}, w_n^{(1)})\to B(w_1, w_1), $$
and use that,  in this case,  $\alpha_n\Gamma_{1,n}^2\to 0$.

For (b) we divide \eqref{se1} by $\alpha_n\Gamma_{1,n}$ and then take the limit in $V'$ as $n\to \infty$. 
\end{proof}

\hilite{
\begin{obs}\label{varygex}
To construct a simple example for a nontrivial case of Theorem \ref{thm1}, we consider a sequence of solutions $v_n$  paired with a corresponding sequence of forces $g_n\to g$. 
Suppose 
\begin{equation}\label{gneq}
    Av_n+\alpha_n B(v_n,v_n)=g_n,
\end{equation}
for $g_n\in H$ and $g_n\to g$ in $H$ as $n\to\infty$.
Then we have the sequence $(v_n)_{n=1}^\infty$ is bounded in $D(A)$, and just as Theorem \ref{2Dxp} there is a subsequence, still denoted by $(v_n)_{n=1}^\infty$, that possesses a unitary expansion or degenerate expansion \eqref{Vx}. Then we have exactly the same statements (i), (ii), (iii) as in Theorem \ref{thm1}.
 The proof is exactly the same as that of Theorem \ref{thm1} with $g_n$ replacing $g$ and the fact $g_n\to g$ in $H$ as $n\to\infty$.  
 More complicated results for Galerkin approximations of \eqref{gneq} were obtained in \cite{FHJ}. 
\end{obs}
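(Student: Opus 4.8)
The plan is to transcribe the proofs of Theorem \ref{2Dxp} and Theorem \ref{thm1} essentially verbatim, replacing the fixed force $g$ by $g_n$ and using at each limit passage only that a convergent sequence in $H$ is bounded in $H$. First I would establish the uniform $D(A)$-bound exactly as in the derivation of \eqref{DAbound}: taking the inner product of \eqref{gneq} with $Av_n$ in $H$ and killing the nonlinear term with the 2D-periodic identity \eqref{BAzero} gives $|Av_n|^2=\langle g_n,Av_n\rangle\le |g_n|\,|Av_n|$, so $|Av_n|\le |g_n|$. Since $g_n\to g$ in $H$, the numbers $|g_n|$ are bounded, whence $(v_n)_{n=1}^\infty$ is bounded in $D(A)$. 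The compact embedding $D(A)\hookrightarrow V$ then produces a subsequence converging in $V$ to some $v$, and running the construction of Theorem \ref{2Dxp} (Theorem \ref{mainlem}, Theorem \ref{refinethm}, and Corollary \ref{maincor}\ref{casingle} with $Z_\infty=V$) on a further subsequence yields the unitary or degenerate expansion \eqref{Vx}.

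Next I would reproduce statements (i)--(iii). For (i), divide \eqref{gneq} by $\alpha_n$ and let $n\to\infty$ in $V'$: the terms $Av_n/\alpha_n$ and $g_n/\alpha_n$ both tend to $0$, because $Av_n$ and $g_n$ are bounded in $H\subset V'$ while $\alpha_n\to\infty$, whereas $B(v_n,v_n)\to B(v,v)$ by continuity of $B:V\times V\to V'$; this gives \eqref{Bzero}. For (ii), a trivial expansion $v_n=v$ substituted into \eqref{gneq} yields $Av+\alpha_n B(v,v)=g_n$, so by \eqref{Bzero} one has $g_n=Av$ for every $n$, and letting $n\to\infty$ gives $Av=g$. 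For (iii), substituting $v_n=v+\Gamma_{1,n}w_n^{(1)}$ and invoking \eqref{Bzero} produces the analogue of \eqref{se1} with $g_n$ in place of $g$; in case (a) I pass to the limit in $V'$ using $g_n\to g$, $\Bs(v,w_n^{(1)})\to\Bs(v,w_1)$, and $\alpha_n\Gamma_{1,n}^2\to 0$, while in case (b) I first divide by $\alpha_n\Gamma_{1,n}$ and then take the limit, noting that $(Av-g_n)/(\alpha_n\Gamma_{1,n})\to 0$ since $Av-g_n$ remains bounded in $V'$.

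The only place that departs at all from the fixed-force argument is these limit passages, where I must separately use that $(g_n)$ is bounded in $H$ (to annihilate $g_n/\alpha_n$ and $(Av-g_n)/(\alpha_n\Gamma_{1,n})$) and that $g_n\to g$ in $H\subset V'$ (to identify the limiting force $g$ in (i), (ii), and (iii)(a)). Both facts are immediate consequences of $H$-convergence, so no genuinely new obstacle arises, and the remark's assertion that the proof coincides with that of Theorem \ref{thm1} is justified.
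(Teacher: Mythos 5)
Your proposal is correct and follows exactly the route the paper intends: the remark's proof is declared to be that of Theorem \ref{thm1} with $g_n$ in place of $g$, and your transcription supplies precisely the two additional ingredients needed (boundedness of $|g_n|$ for the uniform $D(A)$ estimate and the terms $g_n/\alpha_n$, $(Av-g_n)/(\alpha_n\Gamma_{1,n})$, and convergence $g_n\to g$ in $H\subset V'$ to identify the limiting force). No gap.
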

}

\hilite{
\begin{eg}\label{case2example} 
Let $\nu=1$, $L=2\pi$ so that $\Omega=[0,2\pi]^2$ and $\k0=1$ and  
consider $\bx=(x,y)\in \mathbb R^2$,
\begin{align*}
    \bu_n(\bx)&=\begin{bmatrix} \sin(y) \\  n\sum_{m=2}^\infty c_m \sin(mx) \end{bmatrix},\\
{\bf F}_n(\bx) &= \begin{bmatrix} \sin(y)\\ n\sum_{m=2}^\infty  m^2 c_m \sin(mx) \end{bmatrix}
+ n\sum_{m=2}^\infty c_m  \begin{bmatrix}\sin(mx)\cos(y) \\ m\sin(y)\cos(mx) \end{bmatrix},
\quad \bF_n=\mathcal P{\bf F}_n,
\end{align*}
where  $c_m\in\mathbb R$, for $m\ge 2$, not all zero such that $\sum_{m=2}^\infty m^4 c_m^2<\infty$. 
Then $\bu_n\in D(A)$ and $\bF_n\in H$. We can verify that
$$-\Delta \bu_n+(\bu_n\cdot\nabla) \bu_n={\bf F}_n.$$
By applying the Helmholtz-Leray projection $\mathcal P$ to this equation, we obtain
$$A\bu_n+B(\bu_n,\bu_n)=\bF_n.$$
We find
\begin{align*}
    & {\bf F}_n(\bx) = \begin{bmatrix} \sin(y)\\ n\sum_{m=2}^\infty  m^2 c_m \sin(mx) \end{bmatrix}
+ \frac{n}{4\im}\sum_{m=2}^\infty c_m \begin{bmatrix} (\ex^{m\im x}-\ex^{-m\im x})(\ex^{\im y}+\ex^{-\im y}) \\ m(\ex^{\im y}-\ex^{-\im y})(\ex^{m\im x}+\ex^{-m\im x}) \end{bmatrix}\\
&= \begin{bmatrix} \sin(y)\\ n\sum_{m=2}^\infty  m^2 c_m \sin(mx) \end{bmatrix}\\
&\quad -\frac{\im n} {4}\sum_{m=2}^\infty c_m
\left\{\ex^{\im(m,1)\cdot \bx}\begin{bmatrix} 1 \\m \end{bmatrix}
      -\ex^{\im(-m,-1)\cdot \bx}\begin{bmatrix} 1 \\m \end{bmatrix}
      +\ex^{\im(m,-1)\cdot \bx}\begin{bmatrix} 1 \\ -m \end{bmatrix}
      -\ex^{\im(-m,1)\cdot \bx}\begin{bmatrix} 1 \\ -m \end{bmatrix} \right\}.
\end{align*}
Note that the first function $\begin{bmatrix} \sin(y)\\ n\sum_{m=2}^\infty  m^2 c_m \sin(mx) \end{bmatrix}$ of ${\bf F}_n$ is already divergence-free. We calculate the projection
\begin{align*}
\bF_n(\bx) &=\begin{bmatrix} \sin(y)\\ n\sum_{m=2}^\infty  m^2 c_m \sin(mx) \end{bmatrix}\\
&\quad -\frac{\im n} {4}\sum_{m=2}^\infty c_m\frac{m^2-1}{m^2+1}
\left\{(\ex^{\im(m,1)\cdot \bx}-\ex^{\im(-m,-1)\cdot \bx})\begin{bmatrix} -1 \\ m\end{bmatrix}
      -(\ex^{\im(m,-1)\cdot \bx}-\ex^{\im(-m,1)\cdot \bx})\begin{bmatrix} 1 \\ m \end{bmatrix} \right\} .
      \end{align*}
Thus, computing the $L^2$-norm of $\bF_n$ with the use of Parseval's identity, we find
$$G=\alpha_n=\|\bF_n\|_{L^2(\Omega)^2}=\sqrt 2\pi\sqrt{1+c_*^2 n^2} = \sqrt 2 \pi c_*\sqrt{n^2+1/c_*^2},$$ 
where 
$$c_*=\sqrt{\sum_{m=2}^\infty m^4c_m^2+\frac12\sum_{m=2}^\infty c_m^2\frac{(m^2-1)^2}{m^2+1}}>0.$$

After the change of variables \eqref{changevar}, particularly, $\tilde \bx=\bx$, $\tilde \Omega=\Omega$, ${\bf v}_n(\bx)=\bu_n(\bx)/\alpha_n$,  we write ${\bf{v_n}}=v_n$ as
\begin{equation}\label{vnexeg}
v_n=v+\Gamma_{1,n} w _1+\Gamma_{2,n} w_2, 
\end{equation}
where 
\begin{align*}
v&={\bf v}=\mu_0\begin{bmatrix} 0 \\ \sum_{m=2}^\infty   c_m \sin(mx) \end{bmatrix},\quad 
\Gamma_{1,n}=\frac{\sqrt 2\pi }{\alpha_n},\quad 
w _1=\bfw_1=\frac{1}{\sqrt{2}\pi}\begin{bmatrix} \sin(y) \\ 0 \end{bmatrix},\\
&\quad  
\Gamma_{2,n}=\|\tilde{\bfw}_2\| \left(  \mu_0 - \frac{n}{\alpha_n} \right ),\quad 
w_2=\bfw_2=\tilde{\bfw}_2/\|\tilde{\bfw}_2 \|,\text{ with } \tilde{\bfw}_2   =-\begin{bmatrix} 0\\ \sum_{m=2}^\infty   c_m \sin(mx) \end{bmatrix}
\end{align*}
and $\mu_0=\lim_{n\to\infty}n/\alpha_n=1/(\sqrt 2 \pi c_*)$. Note that 
$\|\tilde{\bfw}_2\|=\sqrt 2\pi \sqrt{\sum_{m=2}^\infty m^2c_m^2}>0,$
$w_1$ and $w_2$ belong to $V$ with norms $1$.
Clearly, $\Gamma_{1,n}\to 0$ and $\Gamma_{2,n}>0$,
$$\Gamma_{2,n}=\frac{\mu_0^2\alpha_n^2-n^2}{\alpha_n(\mu_0\alpha_n+n)}\|\tilde{\bfw}_2\|=\frac{1}{c_*^2\alpha_n(\mu_0\alpha_n+n)}\|\tilde{\bfw}_2\|,$$
hence $\Gamma_{2,n}/\Gamma_{1,n}\to 0$.
Then \eqref{vnexeg} is a unitary expansion of $v_n$ in $V$.

Let ${\bf g}_n(\bx)=\bF_n(\bx)/\alpha_n$, and $g_n={\bf g}_n(\cdot)$. Then we obtain equation \eqref{gneq} in Remark \ref{varygex}.
Note that 
$$\frac{{\bf F}_n(\bx)}{\alpha_n}=\frac{1}{\alpha_n}\begin{bmatrix} \sin(y)+ n\sum_{m=2}^\infty \sin(mx)\cos(y) \\ 
n\sum_{m=2}^\infty (m^2 \sin(mx) +m \sin(y)\cos(mx)) \end{bmatrix}$$
which converges in $H$ to 
$$ {\bf{g}}(\bx)= \mu_0 \begin{bmatrix}  \sum_{m=2}^\infty \sin(mx)\cos(y)  \\  \sum_{m=2}^\infty (m^2 \sin(mx) +m \sin(y)\cos(mx)) \end{bmatrix}.$$
Hence, $g_n=\mathcal P {\bf F}_n/\alpha_n \to g:=\mathcal P{\bf g}$ in $H$.
We have $\mu=\lim_{n \to \infty}\alpha_n \Gamma_{1,n}=\sqrt 2\pi$ and it is easy to confirm that
$$
-\Delta \bfv  + \mu ((\bfv\cdot \nabla) \bfw_1 + (\bfw_1\cdot \nabla) \bfv)=\bfg 
$$
and thus, by applying the projection $\mathcal P$ to this equation,  (iii)(a) of Theorem \ref{thm1} holds, see Remark \ref{varygex}.
\end{eg}
}

\begin{figure}[ht]
\psfrag{v=0}{\tiny$v=0$}
\psfrag{1ge1}{\tiny$\alpha_n\Gamma_{1,n} \succsim 1$}
\psfrag{Bw1=0}{\tiny$B(w_1,w_1)=0$}
\psfrag{1gt1}{\tiny$\alpha_n\Gamma_{1,n}^2\succ 1$}
\psfrag{12ge1}{\tiny$\alpha_n\Gamma_{1,n}\Gamma_{2,n} \succsim 1$}
\psfrag{12gt1}{\tiny$\alpha_n\Gamma_{1,n}\Gamma_{2,n} \succ 1$}
\psfrag{12=1}{\tiny$\alpha_n\Gamma_{1,n}\Gamma_{2,n} \sim 1$}
\psfrag{Bs12=0}{\tiny$B_s(w_1,w_2)=0$}
\psfrag{muBs12=g}{\tiny$\mu B_s(w_1,w_2)=g$}
\psfrag{muBs11=g}{\tiny$\mu_* B_s(w_1,w_1)=g$}

\psfrag{chi0orge}{\tiny$\chi_n=0$ $\forall \ n$, or $\Gamma_{1,n}\succsim |\chi_n|$}
\psfrag{approx}{\tiny$ v_n\approx 0+\sum_{k=1}^\infty \Gamma_{k,n}\cdot 0 \text{ in } V$}
\psfrag{chi0orgt}{\tiny$\chi_n=0$ $\forall \ n$, or $\Gamma_{1,n}\succ |\chi_n|$}
\psfrag{chi0}{\tiny$\chi_n=0$ $\forall \ n$, or}
\psfrag{or12ge}{\tiny$\alpha_n\Gamma_{1,n}\Gamma_{2,n}\succsim |\chi_n|$}
\psfrag{or1ge}{\tiny$\alpha_n\Gamma_{1,n}\succsim |\chi_n|$}
\psfrag{mu1mu2}{\tiny$ \mu_1Aw_1 +\mu_2\Bs(w_1,w_2)=g$}
\psfrag{mustar12}{\tiny$\mu_* \mu_1\|w_1\|^2 = \mu_2\langle g,w_2\rangle$}
\psfrag{for12}{\tiny $\exists\  \mu_1, \mu_2\in\mathbb R \text{ with } \mu_1\mu_2>0$}
\psfrag{for2}{\tiny for some $\mu_2 > 0$}
\psfrag{mu2}{\tiny$ \mu_1Aw_1 +\mu_2\Bs(w_1,w_2)=0$}
\psfrag{mustar2}{\tiny$\mu_*\|w_1\|^2 = \mu_2\langle g,w_2\rangle$,}
\psfrag{mu2B12=g}{\tiny$\mu_2 B_s(w_1,w_2)=g$ for some $\mu_2 \neq 0$}
\psfrag{gw2}{\tiny$\langle g,w_2 \rangle =\langle B(w_2,w_2),w_1 \rangle_{V',V}  = 0$}
\psfrag{chi0or12gt}{\tiny $\chi_n=0 \ \forall \ n$, or $\alpha_n \Gamma_{1,n}\Gamma_{2,n} \succ |\chi_n|$}
 \psfrag{chi0or1gt}{\tiny $\chi_n=0 \ \forall \ n$, or $\alpha_n \Gamma_{1,n} \succ |\chi_n|$}
\psfrag{12=chi}{\tiny$\alpha_n\Gamma_{1,n}\Gamma_{2,n}\sim |\chi_n|$}
\psfrag{1=chi}{\tiny$\alpha_n\Gamma_{1,n}\sim |\chi_n|$}
\psfrag{1=1}{\tiny$\alpha_n\Gamma_{1,n}^2\sim 1$}
\psfrag{1gt2}{\tiny$1 \succ \alpha_n\Gamma_{2,n}$}
\psfrag{2=1}{\tiny$1 \sim\alpha_n\Gamma_{2,n}$}
\psfrag{2gt1}{\tiny$\alpha_n\Gamma_{2,n}\succ 1$}
\psfrag{(i)}{\tiny(i)}
\psfrag{(ii)}{\tiny(ii)}
\psfrag{(1)}{\tiny(1)}
\psfrag{(2)}{\tiny(2)}
\psfrag{(3)}{\tiny(3)}
\psfrag{(a)}{\tiny(a)}
\psfrag{(b)}{\tiny(b)}
\psfrag{*}{\hspace*{-.1em}*}
\psfrag{Assumption}{\tiny * Assumption \ref{chiassum}}
\psfrag{w2exists}{\tiny $w_2 \ \exists$ in \eqref{Vx}}
\centerline{\includegraphics[scale=.55]{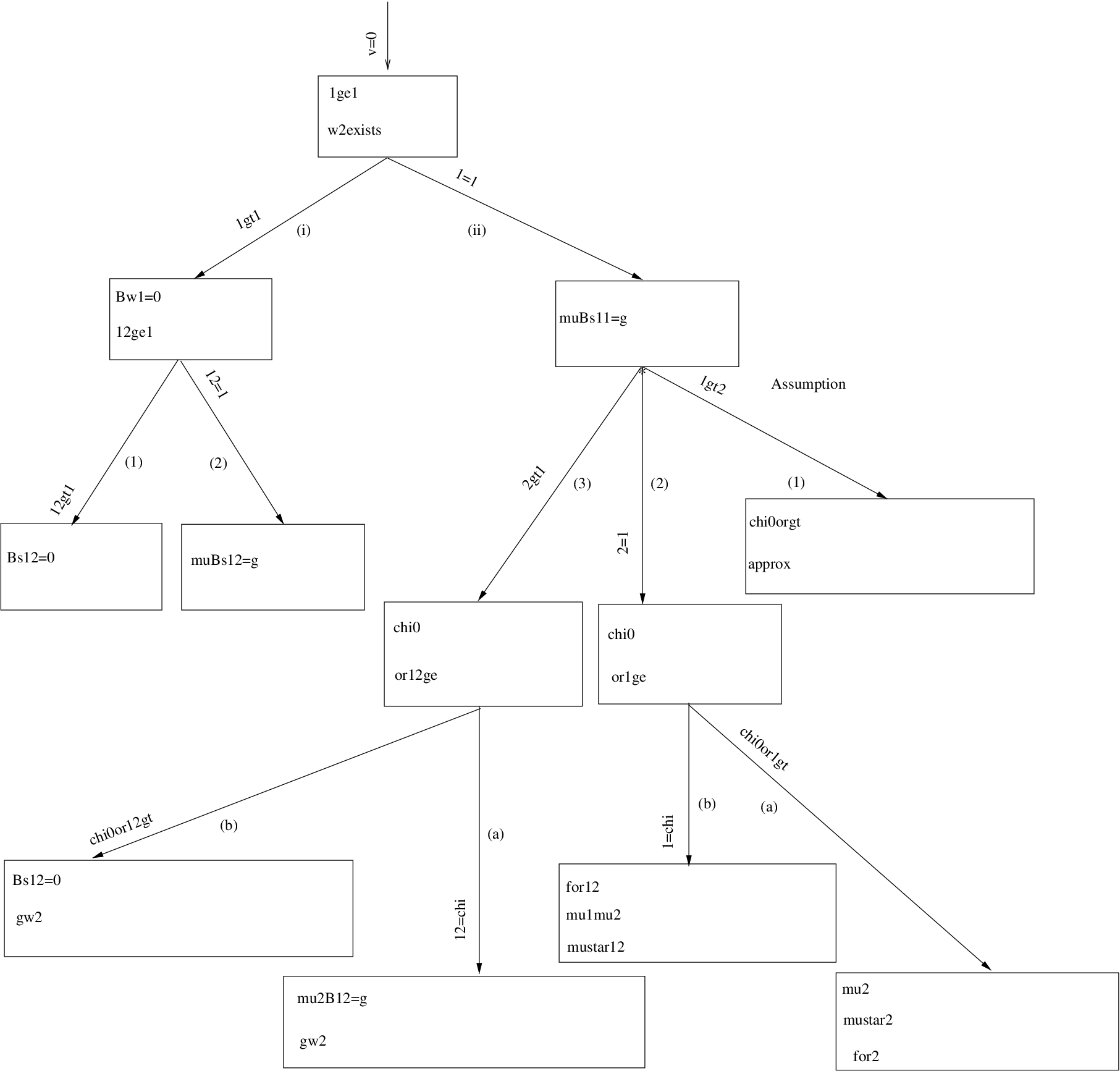}}
\caption{Implications if $v=0$, as stated in Theorem \ref{v0eg}}
\label{flowchart}
\end{figure}

Theorem \ref{thm1} can be explicated in more details for special cases such as $v=0$ and $v=A^{-1}g$ in Theorems \ref{v0eg} and \ref{vAinvg} below. In the following, we specify where we use Assumption \ref{chiassum}.

\begin{assum}\label{chiassum}
Consider the case $\alpha_n \Gamma_{1,n}^2\sim 1$. Define 
\begin{equation}\label{xn}
    \lim_{n\to\infty} \alpha_n \Gamma_{1,n}^2= \mu_*\in(0,\infty)\text{ and set }\chi_n=1-\alpha_n\Gamma_{1,n}^2/\mu_*.
\end{equation}

By the virtue of \cite[Lemma A.4]{FHJ} and using a subsequential set, we further assume that the sequence $(\chi_n)_{n=1}^\infty$ satisfies
either

{\rm (S1)} $\chi_n=0$ for all $n$, or

{\rm (S2)} $\chi_n>0$ for all $n$ and $\mathcal S\cup \{(\chi_n)_{n=1}^\infty\}$ is totally comparable, or

{\rm (S3)} $\chi_n<0$ for all $n$ and $\mathcal S\cup \{(-\chi_n)_{n=1}^\infty\}$ is totally comparable. 
\end{assum}

\hilite{A flowchart description of the next result is given in Figure \ref{flowchart}.}

\begin{thm} \label{v0eg} Assume $v=0$. 
Then $\alpha_n\Gamma_{1,n}^2 \succsim 1$ and $w_2$ exists in \eqref{Vx}.
\begin{enumerate}[label=\tnum]
    \item\label{cas1} Case $\alpha_n \Gamma_{1,n}^2 \succ 1$. Then 
    \begin{equation}\label{Bwzero} B(w_1,w_1)=0 
    \end{equation}
    and $\alpha_n \Gamma_{1,n}\Gamma_{2,n} \succsim 1.$
    In addition,
    \begin{enumerate}[label=\nnum]
        \item\label{f1} if $\alpha_n \Gamma_{1,n}\Gamma_{2,n} \succ 1$, then $\Bs(w_1,w_2)=0$, while 
        \item\label{f2} if $\alpha_n \Gamma_{1,n}\Gamma_{2,n} \sim 1$, then 
        \begin{equation} \label{Blg}
        \mu \Bs(w_1,w_2)=g\text{ for some $\mu>0$ }
        \end{equation}
        and $\langle g,w_1\rangle=0$.
    \end{enumerate}
    \smallskip
    \item\label{cas2} Case $\alpha_n \Gamma_{1,n}^2 \sim 1$. Let $\mu_*$ be defined in \eqref{xn}. Then 
    \begin{equation}\label{Bg}
        \mu_* B(w_1,w_1)=g.
    \end{equation}

    \hilite{Under} Assumption \ref{chiassum}, one has more specific cases below.
    \smallskip
    \begin{enumerate}[label=\nnum]
        \item\label{f5} Case $1\succ \alpha_n \Gamma_{2,n}$. Then $\chi_n=0$ for all $n$,  or $\Gamma_{1,n}\succ |\chi_n|$. In this case, we have the degenerate expansion 
        \begin{equation}\label{zerodegen}
            v_n\approx 0+\sum_{k=1}^\infty \Gamma_{k,n}\cdot 0 \text{ in } V.
        \end{equation}
    
        \item\label{f3} Case $\alpha_n \Gamma_{2,n} \sim 1$. Then $\chi_n=0$ for all $n$, or $\Gamma_{1,n} \succsim |\chi_n|$. 
            \smallskip
        \begin{enumerate}[label=\rnum]
            \item\label{last3} If $\Gamma_{1,n} \sim |\chi_n|$, then 
            \begin{equation}\label{BL1}
                \mu_1Aw_1 +\mu_2\Bs(w_1,w_2)=g  \text{ for some } \mu_1, \mu_2\in\mathbb R \text{ with } \mu_1\mu_2>0,
            \end{equation}
            and 
            $\mu_* \mu_1\|w_1\|^2 = \mu_2\langle g,w_2\rangle$,
            
            \item\label{last4} If $\chi_n=0$ for all $n$,  or $\Gamma_{1,n} \succ |\chi_n|$, then 
            \begin{equation}\label{BL2}
                Aw_1 +\mu_2\Bs(w_1,w_2)=0 \text{ for some }  \mu_2 > 0
            \end{equation}
            and $\mu_* \|w_1\|^2 = \mu_2 \langle g,w_2\rangle$.
        \end{enumerate}
    \smallskip
        \item\label{f4} Case $\alpha_n \Gamma_{2,n} \succ 1$.
        Then $\chi_n=0$ for all $n$, or $\alpha_n \Gamma_{1,n}\Gamma_{2,n} \succsim |\chi_n|$. 
        \begin{enumerate} [label=\rnum]   \smallskip        
            \item\label{last1} If $\alpha_n \Gamma_{1,n}\Gamma_{2,n} \sim |\chi_n|$, then 
            \begin{equation}\label{BL3}
            \mu_2\Bs(w_1,w_2)=g\text{ for some $\mu_2 \neq 0$,}
            \end{equation}
           \begin{equation}\label{BL5}
                \langle g,w_2 \rangle =\langle B(w_2,w_2),w_1 \rangle_{V',V}  = 0.
            \end{equation}
           \item\label{last2} If $\chi_n=0$ for all $n$, or $\alpha_n \Gamma_{1,n}\Gamma_{2,n} \succ |\chi_n|$, then \begin{equation}\label{BL4}
           \Bs(w_1,w_2)=0,
           \end{equation}
            and \eqref{BL5} holds.
        \end{enumerate} 
    \end{enumerate}
\end{enumerate}
\end{thm}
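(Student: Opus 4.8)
The starting point is the identity obtained by inserting the one--term form $v_n=v+\Gamma_{1,n}w_n^{(1)}$ of \eqref{Vx} into \eqref{steady}. Since $v=0$ we have $B(v,v)=0$ and $\Bs(v,\cdot)=0$, so the substitution (compare \eqref{se1}) collapses to the master relation
\[
\Gamma_{1,n}Aw_n^{(1)}+\alpha_n\Gamma_{1,n}^2 B(w_n^{(1)},w_n^{(1)})=g,\qquad n\ge 1.
\]
As $w_n^{(1)}\to w_1$ in $V$, continuity gives $B(w_n^{(1)},w_n^{(1)})\to B(w_1,w_1)$ in $V'$, while $\|\Gamma_{1,n}Aw_n^{(1)}\|_{V'}=\Gamma_{1,n}\|w_n^{(1)}\|\to 0$. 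Rearranging and taking $V'$-norms yields $\alpha_n\Gamma_{1,n}^2=\|g-\Gamma_{1,n}Aw_n^{(1)}\|_{V'}/\|B(w_n^{(1)},w_n^{(1)})\|_{V'}\to \|g\|_{V'}/\|B(w_1,w_1)\|_{V'}\in(0,\infty]$, which is precisely $\alpha_n\Gamma_{1,n}^2\succsim 1$. To show $2\in\mathcal N$ I would exclude the trivial expansion (it forces $g=0$) and the finite case $K=1$. In the latter $w_n^{(1)}=w_1$ for all $n$, so $w_1=v_n/\Gamma_{1,n}\in D(A)$ and the master relation holds with the fixed $w_1$; pairing it in $H$ with $Aw_1$ and invoking the 2D periodic orthogonality \eqref{BAzero} annihilates the nonlinear term, leaving $\Gamma_{1,n}|Aw_1|^2=\langle g,Aw_1\rangle$ for all $n$. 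Since $\Gamma_{1,n}\to 0$ and $w_1\ne 0$, this is impossible, so $w_2$ exists.

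For the split in \ref{cas1}--\ref{cas2} I would inject second order information through $w_n^{(1)}=w_1+(\Gamma_{2,n}/\Gamma_{1,n})w_n^{(2)}$, expanding $B(w_n^{(1)},w_n^{(1)})$ into $B(w_1,w_1)+(\Gamma_{2,n}/\Gamma_{1,n})\Bs(w_1,w_n^{(2)})+(\Gamma_{2,n}/\Gamma_{1,n})^2 B(w_n^{(2)},w_n^{(2)})$. In Case \ref{cas1}, dividing the master relation by $\alpha_n\Gamma_{1,n}^2\to\infty$ and passing to the limit gives $B(w_1,w_1)=0$, i.e.\ \eqref{Bwzero}; the expansion then produces
\[
\Gamma_{1,n}Aw_n^{(1)}+\alpha_n\Gamma_{1,n}\Gamma_{2,n}\Bs(w_1,w_n^{(2)})+\alpha_n\Gamma_{2,n}^2B(w_n^{(2)},w_n^{(2)})=g.
\]
The three coefficient sequences lie in $\mathcal S$, hence are totally comparable, with $\alpha_n\Gamma_{1,n}\Gamma_{2,n}\succ\alpha_n\Gamma_{2,n}^2$ and $\Gamma_{1,n}\prec 1$; since the right side is $\sim 1$, the dominant term must be $\sim 1$, forcing $\alpha_n\Gamma_{1,n}\Gamma_{2,n}\succsim 1$. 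Dividing by $\alpha_n\Gamma_{1,n}\Gamma_{2,n}$ and taking the $V'$-limit gives $\Bs(w_1,w_2)=0$ in part \ref{f1} and $\mu\Bs(w_1,w_2)=g$ with $\mu=\lim\alpha_n\Gamma_{1,n}\Gamma_{2,n}>0$ in part \ref{f2}; testing the latter against $w_1$ and using $B(w_1,w_1)=0$ together with \eqref{Bflip} yields $\langle g,w_1\rangle=0$.

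Case \ref{cas2} carries the real weight. Passing to the limit in the master relation gives $\mu_*B(w_1,w_1)=g$, whence $w_1\ne 0$. Writing $\alpha_n\Gamma_{1,n}^2=\mu_*(1-\chi_n)$ as in \eqref{xn} and substituting the second order expansion, the term $\alpha_n\Gamma_{1,n}^2B(w_1,w_1)=(1-\chi_n)g$ cancels the $g$ on the right and produces the reduced relation
\[
\Gamma_{1,n}Aw_n^{(1)}-\chi_n g+\alpha_n\Gamma_{1,n}\Gamma_{2,n}\Bs(w_1,w_n^{(2)})+\alpha_n\Gamma_{2,n}^2B(w_n^{(2)},w_n^{(2)})=0.
\]
Under Assumption \ref{chiassum} the four coefficient sequences $\Gamma_{1,n}$, $|\chi_n|$, $\alpha_n\Gamma_{1,n}\Gamma_{2,n}$, $\alpha_n\Gamma_{2,n}^2$ are totally comparable, and I would split on $\alpha_n\Gamma_{2,n}$ versus $1$ (parts \ref{f5}, \ref{f3}, \ref{f4}). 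In each branch one isolates the $\sim$-equivalence class of largest sequences, divides the reduced relation by a representative, and passes to the $V'$-limit; the surviving vectors must then sum to zero, delivering the vector identities \eqref{BL1}, \eqref{BL2}, \eqref{BL3}, \eqref{BL4}, while pairing with $w_1$ and using \eqref{Bflip} (and \eqref{BAzero}) supplies the accompanying scalar relations and \eqref{BL5}. Comparing $|\chi_n|$ with the scales $\Gamma_{1,n}$ and $\alpha_n\Gamma_{1,n}\Gamma_{2,n}$ gives the stated dichotomies on $\chi_n$, and in part \ref{f5} all $w_2$-scales are dominated by $\Gamma_{1,n}$, so no further vector identity survives and $v_n$ is shown to decay rapidly enough to admit the all--zero degenerate expansion \eqref{zerodegen}, built as in Example \ref{denexam}.

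I expect the main obstacle to be exactly this bookkeeping in Case \ref{cas2}: deciding which of $\Gamma_{1,n}$, $|\chi_n|$, $\alpha_n\Gamma_{1,n}\Gamma_{2,n}$, $\alpha_n\Gamma_{2,n}^2$ dominates in each branch, treating the borderline $\sim$ relations where two terms survive jointly (as in \eqref{BL1}), and confirming the residual decay that produces the degenerate expansion in part \ref{f5}. This is the step where total comparability and Assumption \ref{chiassum} (via \cite[Lemma A.4]{FHJ}) are indispensable, and the flowchart in Figure \ref{flowchart} organizes the branching.
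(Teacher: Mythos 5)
Your proposal follows the paper's route almost everywhere: the same master identity from substituting $v_n=\Gamma_{1,n}w_n^{(1)}$ into \eqref{steady}, the same limit arguments in $V'$ after dividing by the dominant coefficient sequence, the same second-order substitution $v_n=\Gamma_{1,n}w_1+\Gamma_{2,n}w_n^{(2)}$, and the same pairings with $w_1$ and $w_2$ via \eqref{BV} to extract the scalar relations. The one place where you genuinely diverge is the proof that $w_2$ exists. The paper rules out the finite expansion $v_n=\Gamma_{1,n}w_1$ by splitting into the cases $\alpha_n\Gamma_{1,n}^2\succ 1$ and $\alpha_n\Gamma_{1,n}^2\sim 1$, and in the latter it must already invoke Assumption \ref{chiassum} (via \cite[Lemma A.4]{FHJ}) and run through three subcases on $\chi_n$ before reaching a contradiction. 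You instead note that $w_1=v_n/\Gamma_{1,n}\in D(A)$, pair the equation with $Aw_1$ in $H$, and kill the nonlinear term with the 2D enstrophy orthogonality \eqref{BAzero}, leaving $\Gamma_{1,n}|Aw_1|^2=\langle g,Aw_1\rangle$ for all $n$, which forces $w_1=0$ and contradicts $\|w_1\|=1$. This is correct and cleaner: it handles both comparability cases at once and defers Assumption \ref{chiassum} to where it is actually unavoidable, namely the branching in part (ii); the trade-off is that it is tied to the 2D periodic identity \eqref{BAzero}, whereas the paper's case analysis uses only the structure already needed elsewhere. Two small points to tighten: your $V'$-norm ratio argument for $\alpha_n\Gamma_{1,n}^2\succsim 1$ divides by $\|B(w_n^{(1)},w_n^{(1)})\|_{V'}$, which could vanish for some $n$ (the contrapositive form --- assume $1\succ\alpha_n\Gamma_{1,n}^2$ and conclude $g=0$ --- avoids this); and in Case (ii) you only outline the branching on $\Gamma_{1,n}$, $|\chi_n|$, $\alpha_n\Gamma_{1,n}\Gamma_{2,n}$, $\alpha_n\Gamma_{2,n}^2$ rather than executing the seven sub-branches, but the outline (isolate the dominant $\sim$-class, divide, pass to the limit, then pair with $w_1$ and $w_2$) is exactly what the paper does, including the derivation of the degenerate expansion \eqref{zerodegen} from $Aw_1=0$ in part (1).
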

\begin{proof} Because $g\ne 0$, the steady state $v_n$ is nonzero, that is, $v_n\ne v$.
Hence, expansion \eqref{infser} is nontrivial. Consequently, $w_1$ exists.
Using $v_n=\Gamma_{1,n}w_n^{(1)}$, we have from \eqref{steady} that
\begin{equation}\label{t1}
    \Gamma_{1,n}Aw_n^{(1)}+\alpha_n \Gamma_{1,n}^2B(w_n^{(1)},w_n^{(1)})=g.
\end{equation}
If $1\succ \alpha_n\Gamma_{1,n}^2$, then passing $n\to\infty$ in \eqref{t1} gives $g=0$, a contradiction. Thus, we must have 
$\alpha_n\Gamma_{1,n}^2\succsim 1$.

\medskip
We now prove $w_2$ exists. It certainly does when \eqref{Vx} is a degenerate expansion. Consider the case \eqref{Vx} is a unitary expansion.
Suppose the unitary expansion \eqref{Vx} stops at $w_1$, that is $v_n=\Gamma_{1,n}w_1$ with $\|w_1\|=1$. Then
\begin{equation}\label{t2}
    \Gamma_{1,n}Aw_1+\alpha_n \Gamma_{1,n}^2B(w_1,w_1)=g.
\end{equation}

\medskip\noindent
\emph{Case A. $\alpha_n\Gamma_{1,n}^2\succ 1$.} Dividing  \eqref{t2}  by $\alpha_n \Gamma_{1,n}^2$ and then passing $n\to\infty$ yields $B(w_1,w_1)=0$.
Using this fact in \eqref{t2} implies $\Gamma_{1,n}Aw_1=g$, which yields $g=0$, a contradiction.

\medskip\noindent
\emph{Case B. $\alpha_n\Gamma_{1,n}^2\sim 1$.} 
By \cite[Lemma A.4]{FHJ} and using a subsequential set of $\mathcal S$, we can 
\hilite{assume, without loss of generality, that Assumption \ref{chiassum} holds now.} 
Passing $n\to\infty$ in \eqref{t2} gives 
\begin{equation}\label{t4}
    \mu_* B(w_1,w_1)=g.
\end{equation} 

We can rewrite \eqref{t2} as
\begin{equation}\label{t3}
        \Gamma_{1,n}Aw_1=\chi_n g.
\end{equation}

\begin{itemize}
    \item 
        If $\chi_n=0$ for all $n$, or  $\Gamma_{1,n}\succ |\chi_n|$, then we infer from \eqref{t3} that $Aw_1=0$, which is a contradiction.
    \item 
        If $|\chi_n|\succ \Gamma_{1,n}$, then $g=0$, a contradiction.
    \item 
        If $\Gamma_{1,n}\sim |\chi_n|$, then \eqref{t3} implies $\mu_1 Aw_1=g$ for some $\mu_1\ne 0$.
Combining this with \eqref{t4} gives
\begin{equation*}
    \mu_1 Aw_1=\mu_* B(w_1,w_1).
\end{equation*}
Taking the product in $V'\times V$ of this equation with $w_1$, and applying \eqref{Auu}, \eqref{BV}, we deduce $w_1=0$, a contradiction.
\end{itemize}

\noindent 
Since both cases A and B yield contradictions, the unitary expansion \eqref{Vx} cannot stop at $w_1$. Thus, $w_2$ exists.

\medskip
Now, we write $v_n=\Gamma_{1,n}w_1+\Gamma_{2,n}w_n^{(2)}$ and substitute it into the steady state equation \eqref{steady}  to obtain
\begin{multline} \label{z2m}
\Gamma_{1,n}Aw_1+\Gamma_{2,n}Aw_n^{(2)} +\alpha_n\Gamma_{1,n}^2 B(w_1,w_1)
+\alpha_n \Gamma_{1,n}\Gamma_{2,n}\Bs(w_1,w_n^{(2)})\\
 +\alpha_n \Gamma_{2,n}^2B(w_n^{(2)},w_n^{(2)})=g.
\end{multline}
\begin{enumerate}[label=\tnum]
    \item \textit{Case $\alpha_n\Gamma_{1,n}^2 \succ 1$.} Since $\alpha_n\Gamma_{1,n}^2\to\infty$ we have also $\alpha_n\Gamma_{1,n}\to\infty$. Dividing \eqref{t1} by $\alpha_n\Gamma_{1,n}^2$ and taking $n\to\infty$, we obtain \eqref{Bwzero}, so that \eqref{z2m} reduces to 
\begin{equation} \label{z4m}
\Gamma_{1,n}Aw_1+\Gamma_{2,n}Aw_n^{(2)} +\alpha_n \Gamma_{1,n}\Gamma_{2,n}\Bs(w_1,w_n^{(2)})
 +\alpha_n \Gamma_{2,n}^2B(w_n^{(2)},w_n^{(2)})=g.
\end{equation}

If $1 \succ \alpha_n \Gamma_{1,n}\Gamma_{2,n}$, then all terms on the left in \eqref{z4m} tend to 0 as $n\to \infty$, which would mean $g=0$, a contradiction. Hence, $\alpha_n \Gamma_{1,n}\Gamma_{2,n}\succsim  1$, and consequently, 
$\alpha_n \Gamma_{2,n}\to\infty$.

Then part \ref{f1} and the first identity \eqref{Blg} of part \ref{f2} follow from dividing the equation \eqref{z4m} by $\alpha_n \Gamma_{1,n}\Gamma_{2,n}$, passing $n\to\infty$.
For the second identity of part \ref{f2}, by taking the product in $V'\times V$ of \eqref{Blg} with $w_1$, we obtain
$$\langle g,w_1\rangle = \mu\langle B(w_1,w_2),w_1\rangle_{V',V}+\mu\langle B(w_2,w_1),w_1\rangle_{V',V}=-\mu\langle B(w_1,w_1),w_2\rangle_{V',V} + 0=0.$$ 
Note that we applied both relations in \eqref{BV} and used \eqref{Bwzero}. 

\item \textit{Case $\alpha_n \Gamma_{1,n}^2 \sim 1$.} We take $n \to \infty$ in \eqref{t1} and use the limit in \eqref{xn} to obtain  \eqref{Bg}.
As a consequence of \eqref{Bg}, we have
\begin{equation}\label{gw1}
\langle g,w_1\rangle=0.
\end{equation}
Using this identity \eqref{Bg}, we rewrite \eqref{z2m} as
\begin{align} \label{z3m}
\Gamma_{1,n}Aw_1+\Gamma_{2,n}Aw_n^{(2)}
+\alpha_n \Gamma_{1,n}\Gamma_{2,n}\Bs(w_1,w_n^{(2)})
+\alpha_n \Gamma_{2,n}^2B(w_n^{(2)},w_n^{(2)})=\chi_n g.
\end{align}

\hilite{Now, under Assumption \ref{chiassum}, we have the following.}

\begin{enumerate}[label=\nnum]

\item Case $1\succ \alpha_n \Gamma_{2,n}$. Then $\Gamma_{1,n}\succ \alpha_n \Gamma_{1,n}\Gamma_{2,n}\succ \alpha_n \Gamma_{2,n}^2$.  We next consider the three possibilities that follow.
\begin{itemize}
\item   
        If $\chi_n=0$ for all $n$,  or $\Gamma_{1,n}\succ |\chi_n|$, then \eqref{z3m} implies  $Aw_1=0$. Hence $w_1=0$, and, thanks to \eqref{Vx} being an either unitary or degenerate expansion, \eqref{Vx}  must be the degenerate expansion \eqref{zerodegen}.
 
\item
        If $|\chi_n|\succ \Gamma_{1,n}$, then \eqref{z3m} implies $g=0$ which is a contradiction.

\item 
        If $|\chi_n|\sim \Gamma_{1,n}$, then \eqref{z3m} implies 
$g=\mu_1 Aw_1$, with $\mu_1\ne 0.$ Taking the product in $V'\times V$ of this equation with $w_1$ and using \eqref{gw1}, we deduce $w_1=0$, a contradiction.
\end{itemize}

    \item \emph{Case} $ \alpha_n \Gamma_{2,n}\sim 1$. Then $\Gamma_{1,n}\sim \alpha_n\Gamma_{1,n}\Gamma_{2,n}\succ \alpha_n \Gamma_{2,n}^2$.  If $|\chi_n|\succ \Gamma_{1,n}$, then \eqref{z3m} implies $g=0$, a contradiction. 
    Thus, $\chi_n=0$ or $\Gamma_{1,n}\succsim |\chi_n|$.  
\begin{enumerate}[label=\rnum]
        \item If $|\chi_n|\sim \Gamma_{1,n}$, then, since 
 $w_n^{(2)}\to w_2$, equation \eqref{z3m} implies the equation in \eqref{BL1} with 
$$ 
\mu_1=\lim_{n\to\infty}\frac{\Gamma_{1,n}}{\chi_n}\ne 0 \text{ and }
\mu_2=\lim_{n\to\infty}\frac{\alpha_n\Gamma_{2,n}}{\chi_n}\ne 0.$$
Multiplying these two limits yields $\mu_1\mu_2>0$.
Taking the product in $V'\times V$ of \eqref{BL1} with $w_1$ and using property \eqref{gw1} give
$$\mu_1\|w_1\|^2=\mu_2 \langle B(w_1,w_1),w_2\rangle_{V',V} 
=(\mu_2/\mu_*) \langle g,w_2\rangle,$$
which implies the second identity of part (a).

       \item If $\chi_n=0$ for all $n$, or $\Gamma_{1,n}\succ |\chi_n|$, then \eqref{z3m} implies \eqref{BL2}.
By taking the product in $V'\times V$ of  equation \eqref{BL2}  with $w_1$, we obain
$$ \|w_1\|^2 -\mu_2 \langle B(w_1,w_1),w_2\rangle_{V',V} =0 $$ 
and hence, thanks to \eqref{Bg}, $(\mu_2/\mu_*) \langle g,w_2\rangle=\|w_1\|^2$.
Thus, the second identity of part (b) follows.
    \end{enumerate}
    
    \item  \emph{Case} $ \alpha_n \Gamma_{2,n}\succ 1$. Then $\alpha_n \Gamma_{1,n}\Gamma_{2,n}\succ \Gamma_{1,n}$.  If $|\chi_n|\succ \alpha_n \Gamma_{1,n}\Gamma_{2,n}$, then \eqref{z3m} implies  $g=0$ again, which is a contradiction. 

\begin{enumerate}[label=\rnum]
    \item If $|\chi_n|\sim \alpha_n \Gamma_{1,n}\Gamma_{2,n}$. 
Then \eqref{z3m} yields \eqref{BL3}.
Taking the product in $V'\times V$ of equation \eqref{BL3} with $w_1$ and using \eqref{gw1}, \eqref{BV} and \eqref{Bg}, we find
\begin{align*}
0=\langle g,w_1 \rangle 
=\mu_2 \langle B(w_1,w_2),w_1\rangle_{V',V} =-\mu_2 \langle B(w_1,w_1),w_2\rangle_{V',V}
=-\frac{\mu_2}{\mu_*}\langle g,w_2 \rangle.
\end{align*}    
Now the product of equation \eqref{BL3} with $w_2$ yields
\begin{align*}
0=\langle g,w_2 \rangle 
 =\mu_2 \langle B(w_2,w_1),w_2 \rangle_{V',V} 
=-\mu_2 \langle B(w_2,w_2),w_1\rangle_{V',V}.
\end{align*}    

   \item  If $\chi_n=0$ for all $n$, or $\alpha_n \Gamma_{1,n}\Gamma_{2,n} \succ |\chi_n|$, then \eqref{z3m} gives
 \eqref{BL4}.
Then, again, taking the product in $V'\times V$  of equation \eqref{BL4} with $w_1$ yields $\langle g,w_2\rangle=0$, and taking the product in $V'\times V$  of equation \eqref{BL4} with $w_2$ gives $\langle B(w_2,w_2),w_1\rangle_{V',V}=0$.
\end{enumerate}
\end{enumerate}
\end{enumerate}
\end{proof}

According to Theorem \ref{thm1}, if \eqref{Vx} is a trivial unitary expansion, then $Av=g$.  
Below, we study the case when $Av=g$, but \eqref{Vx} is not a trivial unitary expansion.

\begin{thm} \label{vAinvg} 
Assume $v=A^{-1}g$ and  \eqref{Vx} is not a trivial unitary expansion.
Then 
\begin{equation}\label{B1}
    \Bs(v,w_1)=0
\end{equation}
and there exists $w_2$ in \eqref{Vx}. Moreover, below are all scenarios for comparing the sequences $(\Gamma_{1,n})_{n=1}^\infty$, $(\alpha_n \Gamma_{2,n})_{n=1}^\infty$ and $(\alpha_n \Gamma_{1,n}^2)_{n=1}^\infty$.
\begin{enumerate}[label=\tnum]
   \item\label{m1} If $\Gamma_{1,n} \succ \alpha_n \Gamma_{2,n} ,\alpha_n \Gamma_{1,n}^2 $ or $\Gamma_{1,n} \sim \alpha_n \Gamma_{1,n}^2 \succ \alpha_n \Gamma_{2,n}  $, then one has the degenerate expansion
   \begin{equation}\label{degen2}
       v_n\approx v+\sum_{k=1}^\infty \Gamma_{k,n}\cdot 0 \text{ in } V.
   \end{equation}
    \item\label{m2} If $\alpha_n \Gamma_{2,n} \succ \Gamma_{1,n} ,\alpha_n \Gamma_{1,n}^2 $, then
    $\Bs(v,w_2)=0 .$

    \item\label{m3} If $\alpha_n \Gamma_{1,n}^2 \succ \alpha_n \Gamma_{2,n} ,\Gamma_{1,n} $, then
    $B(w_1,w_1)=0 .$

    \item\label{m4} If $\Gamma_{1,n} \sim  \alpha_n \Gamma_{2,n} \succ \alpha_n \Gamma_{1,n}^2 $, then
    $Aw_1+\mu \Bs(v,w_2)=0,$ for some $\mu>0 .$

    \item\label{m5} If $\alpha_n \Gamma_{2,n} \sim  \alpha_n \Gamma_{1,n}^2 \succ \Gamma_{1,n}  $, then
    $\Bs(v,w_2)+\mu B(w_1,w_1)=0$, for some $\mu> 0 .$
    
    \item\label{m6} If $\Gamma_{1,n} \sim \alpha_n \Gamma_{2,n} \sim  \alpha_n \Gamma_{1,n}^2  $, then
    $Aw_1+\mu_1\Bs(v,w_2)+\mu_2 B(w_1,w_1)=0$, for some $\mu_1,\mu_2>0 .$
\end{enumerate}
\end{thm}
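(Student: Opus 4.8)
The plan is to follow the template of Theorem~\ref{v0eg}: insert successive truncations of the expansion \eqref{Vx} into the steady-state equation \eqref{steady}, use the two cancellations $B(v,v)=0$ (Theorem~\ref{thm1}\ref{p0}) and $Av=g$ (valid because $v=A^{-1}g$), and then extract relations among $v,w_1,w_2$ by comparing the orders of the coefficient sequences in the totally comparable set $\mathcal S$.

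I would first establish \eqref{B1}. Writing $v_n=v+\Gamma_{1,n}w_n^{(1)}$ and substituting into \eqref{steady}, the terms $Av$ and $g$ cancel while $\alpha_nB(v,v)$ vanishes, so after dividing by $\Gamma_{1,n}>0$ one gets
\begin{equation*}
Aw_n^{(1)}+\alpha_n\Bs(v,w_n^{(1)})+\alpha_n\Gamma_{1,n}B(w_n^{(1)},w_n^{(1)})=0.
\end{equation*}
Dividing once more by $\alpha_n$ and letting $n\to\infty$ --- using $w_n^{(1)}\to w_1$ in $V$, the continuity of $A\colon V\to V'$ and $B\colon V\times V\to V'$, and $1/\alpha_n,\Gamma_{1,n}\to0$ --- leaves only the middle term and yields $\Bs(v,w_1)=0$. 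To see that $w_2$ is present I would argue by contradiction: if \eqref{Vx} were a unitary expansion stopping at $w_1$ (so $w_n^{(1)}=w_1$, $\|w_1\|=1$), the displayed identity together with \eqref{B1} collapses to $Aw_1+\alpha_n\Gamma_{1,n}B(w_1,w_1)=0$; comparing $(1)_{n=1}^\infty$ with $(\alpha_n\Gamma_{1,n})_{n=1}^\infty$ in $\mathcal S$ gives the three possibilities $1\succ\alpha_n\Gamma_{1,n}$, $\alpha_n\Gamma_{1,n}\succ1$, $\alpha_n\Gamma_{1,n}\sim1$, and in each --- directly, or after pairing with $w_1$ in $V'\times V$ and invoking \eqref{Auu} and \eqref{BV} --- one finds $w_1=0$, contradicting $\|w_1\|=1$ (for a degenerate expansion $w_2$ is present automatically).

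For the cases \ref{m1}--\ref{m6} I would substitute the truncation $v_n=v+\Gamma_{1,n}w_1+\Gamma_{2,n}w_n^{(2)}$; cancelling $g$ and using $B(v,v)=0$, $\Bs(v,w_1)=0$ leaves
\begin{multline*}
\Gamma_{1,n}Aw_1+\Gamma_{2,n}Aw_n^{(2)}+\alpha_n\Gamma_{2,n}\Bs(v,w_n^{(2)})+\alpha_n\Gamma_{1,n}^2B(w_1,w_1)\\
+\alpha_n\Gamma_{1,n}\Gamma_{2,n}\Bs(w_1,w_n^{(2)})+\alpha_n\Gamma_{2,n}^2B(w_n^{(2)},w_n^{(2)})=0.
\end{multline*}
The decisive observation is that the three ``secondary'' coefficients $\Gamma_{2,n}$, $\alpha_n\Gamma_{1,n}\Gamma_{2,n}$, $\alpha_n\Gamma_{2,n}^2$ are each strictly dominated by $\alpha_n\Gamma_{2,n}$ (the ratios are $1/\alpha_n$, $\Gamma_{1,n}$, $\Gamma_{2,n}\to0$); since $\alpha_n\Gamma_{2,n}$ is itself one of the three ``leading'' sequences $\Gamma_{1,n}$, $\alpha_n\Gamma_{2,n}$, $\alpha_n\Gamma_{1,n}^2$, the pivot (the maximum of these three under $\succsim$) strictly dominates every secondary term. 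Dividing the identity by this pivot and passing to the limit, with $w_n^{(2)}\to w_2$ in $V$, annihilates all secondary terms and keeps only those leading terms equivalent to the pivot, the surviving coefficient ratios producing the positive constants $\mu,\mu_1,\mu_2$. Running through the possible orderings of $\{\Gamma_{1,n},\alpha_n\Gamma_{2,n},\alpha_n\Gamma_{1,n}^2\}$ permitted by total comparability reproduces exactly \ref{m1}--\ref{m6}: a single surviving term gives one of $Aw_1=0$, $\Bs(v,w_2)=0$, $B(w_1,w_1)=0$, while two or three comparable pivots give the stated linear combinations. In case \ref{m1} the surviving relation forces $Aw_1=0$, hence $w_1=0$, and since \eqref{Vx} is unitary or degenerate this is only compatible with the all-zero degenerate expansion \eqref{degen2}.

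The obstacle here is bookkeeping rather than analysis: one must check that \ref{m1}--\ref{m6} exhaust all orderings of the three leading sequences and that in each ordering precisely the advertised terms survive the division by the pivot while the secondaries vanish by transitivity of $\succ$. The only subtle point is the implication that $w_1=0$ forces the all-zero degenerate expansion \eqref{degen2}, which uses the dichotomy in Definition~\ref{refinex}\ref{degenx}: a degenerate expansion whose leading term vanishes must be the expansion with all terms zero.
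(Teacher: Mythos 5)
Your proposal is correct and follows essentially the same route as the paper: the same substitution of the one- and two-term truncations into \eqref{steady}, the same cancellation of $Av-g$ and $B(v,v)$, the same three-case contradiction for the existence of $w_2$, and the same order-comparison of $\Gamma_{1,n}$, $\alpha_n\Gamma_{2,n}$, $\alpha_n\Gamma_{1,n}^2$ against the dominated terms $\Gamma_{2,n}$, $\alpha_n\Gamma_{1,n}\Gamma_{2,n}$, $\alpha_n\Gamma_{2,n}^2$. Your unified ``pivot'' observation cleanly packages the case analysis that the paper only carries out explicitly for \ref{m1} and \ref{m2} before omitting the remaining details, and your handling of the degenerate conclusion in \ref{m1} via the dichotomy in Definition \ref{refinex}\ref{degenx} matches the paper's argument.
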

\begin{proof} 
Recall from \eqref{Bzero} that $B(v,v)=0$.
 Substituting $v_n=v+\Gamma_{1,n}w_n^{(1)}$ into equation \eqref{steady} gives
\begin{equation}\label{t5}
\Gamma_{1,n}Aw_n^{(1)}
+\alpha_n \Gamma_{1,n} \Bs(v,w_n^{(1)})  +\alpha_n \Gamma_{1,n}^2B(w_n^{(1)},w_n^{(1)})=0,    
\end{equation}
Dividing equation \eqref{t5} by $\alpha_n\Gamma_{1,n}$ and letting $n\to\infty$ gives \eqref{B1}.

In the case  \eqref{Vx} is a degenerate expansion, $w_2$ always exists.
It remains to consider the case \eqref{Vx} is a unitary expansion  and it stops at $w_1$, that is, $v_n=v+\Gamma_{1,n}w_1$, with $w_1\ne 0$.
Then one has equation \eqref{t5} with $w_1$ replacing $w_n^{(1)}$ which, together with \eqref{B1}, implies
\begin{equation}\label{t6}
       Aw_1+ \alpha_n \Gamma_{1,n}B(w_1,w_1)=0.
\end{equation}

\begin{itemize}
    \item If $1\succ \alpha_n \Gamma_{1,n}$, then \eqref{t6} implies $Aw_1=0$ which is a contradiction.

    \item If $\alpha_n \Gamma_{1,n}\succ 1$, then \eqref{t6} implies $B(w_1,w_1)=0$, which, thanks to \eqref{t6} again, yields $Aw_1=0$, a contradiction.

    \item If $1\sim \alpha_n \Gamma_{1,n}$, then \eqref{t6} implies 
$Aw_1+\mu B(w_1,w_1)=0$ for some number $\mu>0$. Taking the product in $V'\times V$ of this equation with $w_1$ yields $w_1=0$ which is a contradiction.
\end{itemize}
All three scenarios above yield a contradiction, hence $w_2$ exists.

Now, we can write $v_n=v+\Gamma_{1,n}w_1+\Gamma_{2,n}w_n^{(2)}$ and substitute this expression into equation \eqref{steady} to obtain
\begin{equation}\label{t7}
\begin{aligned}
&    \Gamma_{1,n}Aw_1+\Gamma_{2,n}Aw_n^{(2)}
+\alpha_n \Gamma_{2,n} \Bs(v,w_n^{(2)}) 
+\alpha_n \Gamma_{1,n}^2B(w_1,w_1)\\
&
+\alpha_n \Gamma_{1,n}\Gamma_{2,n} \Bs(w_1,w_n^{(2)}) 
+\alpha_n \Gamma_{2,n}^2B(w_n^{(2)},w_n^{(2)})=0 .  
\end{aligned}
\end{equation} 

\medskip
\textit{Proof of \ref{m1}.} Considering equation \eqref{t7} in each stated case, we have the following.
\begin{enumerate}[label=\rnum]
    \item If $\Gamma_{1,n}\succ \alpha_n \Gamma_{2,n},\alpha_n \Gamma_{1,n}^2$, then $Aw_1=0$, so we immediately have $w_1=0$.
    \item If $\Gamma_{1,n}\sim \alpha_n \Gamma_{1,n}^2 \succ \alpha_n \Gamma_{2,n}$, then 
    $$Aw_1 +\mu B(w_1,w_1)=0 \text{ for some } \mu> 0.$$
    Thus taking the product in $V'\times V$ with $w_1$ and applying \eqref{BV}, we find that, again $w_1=0$.
\end{enumerate}
Therefore, these two cases induce the degenerate expansion \eqref{degen2}. 

\medskip
\textit{Proof of \ref{m2}.} Dividing equation \eqref{t7} by $\alpha_n \Gamma_{2,n}$ and passing $n\to\infty$, we obtain $\Bs(v,w_2)=0.$

\medskip
One can prove the remaining items \ref{m3}--\ref{m6} similarly by considering the largest sequence among $(\Gamma_{1,n})_{n=1}^\infty$, $(\alpha_n \Gamma_{2,n} )_{n=1}^\infty$ and $(\alpha_n \Gamma_{1,n}^2)_{n=1}^\infty$ in equation \eqref{t7}. We omit the details.
\end{proof}

\subsection{The 2D no-slip case and 3D case}\label{3Dappln}
Consider 2D no-slip case or 3D case for both no-slip or periodic boundary condition.
Taking the inner product in $H$ of \eqref{steady} with $v_n$ and applying \eqref{Bflip}, \eqref{Poincare}, we have
\begin{equation}\label{Vbound} 
\|v_n\|\le |g|.
\end{equation}
With this bound,  we have the following analogue of Theorem \ref{2Dxp}.

\begin{thm}\label{3Dxp}
One has the following.

\begin{enumerate}[label=\tnum]
    \item The statement in Theorem \ref{2Dxp}(i) holds true with the interval $(1/2,1)$ being replaced with $(0,1/2)$.

\item Let $s$ be any number in the interval $[0,1/2)$.
Then there is a subsequence of $(v_n)_{n=1}^\infty$ that has a  unitary expansion or degenerate expansion in $D(A^s)$.    
\end{enumerate}
\end{thm}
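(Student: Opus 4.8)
The plan is to transcribe the proof of Theorem~\ref{2Dxp} almost verbatim, replacing the stronger \emph{a priori} bound \eqref{DAbound} (boundedness in $D(A)$) by the weaker bound \eqref{Vbound} (boundedness in $V=D(A^{1/2})$). The only structural consequence of this replacement is that the source space from which a convergent subsequence is extracted becomes $V=D(A^{1/2})$ instead of $D(A)$, and this is exactly what lowers the admissible exponent range from $(1/2,1)$ to $(0,1/2)$.

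For part (i), I would let $(s_k)_{k=0}^\infty$ be any strictly decreasing sequence in $(0,1/2)$ and set $Z_k=D(A^{s_k})$, $\mathcal Z=(Z_k)_{k=0}^\infty$. Because the $s_k$ are strictly decreasing and nonnegative, the compact-embedding fact recalled at the end of Section~\ref{prelim} (namely $D(A^\alpha)\hookrightarrow D(A^\beta)$ compactly for $\alpha>\beta\ge 0$) guarantees that every embedding $Z_k\subset Z_{k+1}$ is compact, so $\mathcal Z$ meets the hypotheses of Theorem~\ref{mainlem}. To produce the required convergent sequence in $Z_0$, I would use \eqref{Vbound}: the sequence $(v_n)_{n=1}^\infty$ is bounded in $V=D(A^{1/2})$, and since $s_0<1/2$ the embedding $D(A^{1/2})\hookrightarrow D(A^{s_0})=Z_0$ is compact, so a subsequence converges in $Z_0$ to some $v\in Z_0$. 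Applying Theorem~\ref{mainlem} to this subsequence then yields a further subsequence with a strict expansion in $\mathcal Z$.

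For part (ii), given $s\in[0,1/2)$ I would choose the strictly decreasing sequence $(s_k)_{k=0}^\infty\subset(0,1/2)$ with the additional requirement $s_k>s$ for all $k\ge 0$, which is possible precisely because $s<1/2$. After extracting, as in part (i), a subsequence converging in $Z_0=D(A^{s_0})$, the condition $s_k>s$ supplies the continuous embeddings $Z_k=D(A^{s_k})\subset D(A^s)=:Z_\infty$, so I would invoke Corollary~\ref{maincor}\ref{casingle} with $Z_\infty=D(A^s)$ to obtain a subsequence possessing a unitary or degenerate expansion in $D(A^s)$.

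I do not expect any genuine obstacle here: the entire content is the bookkeeping of exponent ranges. The essential observation is that the steady-state bound \eqref{Vbound} controls only the $V$-norm, so $V=D(A^{1/2})$ is the strongest space in which uniform boundedness of $(v_n)_{n=1}^\infty$ is available, and every target exponent must therefore lie strictly below $1/2$. This is exactly the restriction alluded to in the closing remark following Theorem~\ref{3Dxp} about the limitations of the current higher-norm estimates in these cases.
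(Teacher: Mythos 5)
Your proposal is correct and follows essentially the same route as the paper, which likewise proves the theorem by repeating the argument of Theorem \ref{2Dxp} with $Z_*=V$ and with \eqref{DAbound} replaced by \eqref{Vbound}, so that compactness is extracted from boundedness in $V=D(A^{1/2})$ and the exponents must lie below $1/2$. Your handling of the limit $v$ (obtained directly from the compact embedding into $Z_0$, rather than from any prior $V$-convergence) also matches the paper's remark on this point.
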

\begin{proof}
    The proof is the same as that of Theorem \ref{2Dxp} with $Z_*=V$ and \eqref{DAbound} being replaced with \eqref{Vbound}. 
    Note that the limit $v$ in \eqref{vlim1} exists but is not given by \eqref{vlim2}.
\end{proof}

\hilite{
\section{Summary and discussion}
This work demonstrates that any sequence of steady state solutions to the Navier-Stokes equations has a subsequence that can be expanded using compactly embedded nested spaces.  The sequence is tied to a particular parameter, the Grashof number, $G$, as $G\to\infty$, which has particular significance for turbulence.  In the 2D periodic case, the quadratic nonlinearity of the NSE can be exploited to categorize the sequence limit along with leading terms.  These categories are determined by the decay rates of the coefficients of the terms in the expansion and characterized by additional relations that the limit of the sequence and certain leading terms must satisfy.  It should be emphasized that the results here are rigorous.

There are still challenges for the NSE in the 2D no-slip and 3D cases.  
More precisely, if we fix a number $s\in[0,1/2)$, then by the virtue of Theorem \ref{3Dxp}(ii), there is a subsequence, still denoted by $(v_n)_{n=1}^\infty$, that has a  unitary expansion or degenerate expansion
\begin{equation}\label{Asx}
v_n\approx v+\sum_{k\in\mathcal N} \Gamma_{k,n}w_k \text{ in } D(A^s).    
\end{equation}
However, unlike Theorems \ref{thm1}, \ref{v0eg} and \ref{vAinvg} for the the 2D periodic case, here we are not able to write down equations for $v$ and $w_k$. The main reason is that   $B(\cdot,\cdot)$, as currently defined, is not a continuous bilinear form from $V\times D(A^{s})$ or $D(A^s)\times V$ or $D(A^s)\times D(A^s)$ to $V'$, for $s<1/2$.
One possibility for carrying out this program in 3D could be to consider regularized Navier--Stokes equations, such as the Leray-$\alpha$ model \cite{Lerayalpha}.

This program for intrinsic expansion could naturally lead in other directions.  The generality of the expansion method 
makes it readily applicable to other models with quadratic (even polynomial) nonlinearities. 
Among them are the Kuramoto--Sivashinsky equation (KSE) as well as the Rayleigh--B\'enard (RB) and magneto-hydrodynamic systems \cite{T97}.  For instance, the role of the parameter $G$ could be played by the domain length for the 1D periodic KSE, and by the Rayleigh number for the RB system.
Finally, there is the question of extending this approach to  sequences of time dependent solutions.  
}
\appendix

\section{}

\begin{proof}[Proof of Proposition \ref{unique}]
First, the vector $v$ is determined uniquely by the limit \eqref{vlim}.
By the virtue of Proposition \ref{exclusive}, the three cases in Definition \ref{uexp} are exclusive. 

Case \ref{d1}: It is obvious that $\mathcal N=\emptyset$.

Case \ref{d2}: Then we have the identity \eqref{finvn}. Assume $v_n$ has another  nontrivial, finite strict expansion
\begin{equation*}%\label{vsum2}
    v_n= v+\sum_{k\in\mathcal N'} \Gamma'_{k,n} w'_k\text{ together with vectors } w_n'^{(k)}.
\end{equation*}
Denote $K_0=\min\{\max\mathcal N,\max \mathcal N'\}$.
Applying Lemma \ref{preuniq} to $K:=K_0$, we have 
\begin{equation}\label{utemp}
    w_k=w_k',\quad \Gamma_{k,n}=\Gamma'_{k,n}, \quad w_n^{(k)}=w_n'^{(k)} \text{ for all }1\le k\le K_0, n\ge 1. 
\end{equation}

If $\max\mathcal N'>K_0$,  then, as in the case of \eqref{wcontra}, one has for $n\ge 1$ that 
\begin{align*} 
0=\left\|v_n-v-\sum_{k=1}^{K_0} \Gamma_{k,n}w_k\right\|_{Z_k}
&=\left\|v_n-v-\sum_{k=1}^{K_0} \Gamma_{k,n}'w_k'\right\|_{Z_k}\\
&=\Gamma_{K_0+1,n}'\|w_n'^{(K_0+1)}\|_{Z_{K_0}}=\Gamma'_{K_0+1,n}>0,
\end{align*}
which is a contradiction.  If $\max\mathcal N>K_0$,  then, similarly, 
\begin{align*} 
0
=\left\|v_n-v-\sum_{k=1}^{K_0} \Gamma_{k,n}'w_k'\right\|_{Z_k}
&=\left\|v_n-v-\sum_{k=1}^{K_0} \Gamma_{k,n}w_k\right\|_{Z_k}\\
&=\Gamma_{K_0+1,n}\|w_n^{(K_0+1)}\|_{Z_{K_0}}=\Gamma_{K_0+1,n}>0,
\end{align*}
which is a contradiction again.
Thus, $\mathcal N=\mathcal N'$  and, thanks to \eqref{utemp},  the vectors $w_k$, $w_n^{(k)}$ and numbers $\Gamma_{k,n}$ in \eqref{vsum} are determined uniquely for $k\in \mathcal N$ and $n\ge 1$. 

Case \ref{d3}:  Clearly, $\mathcal N=\mathbb N$. For each $K\ge 1$, we apply Lemma \ref{preuniq} to find that
$w_k$, $w_n^{(k)}$  and $\Gamma_{k,n}$ in \eqref{vsum} are determined uniquely for $1\le k\le K$ and $n\ge \bar N_k$.
Since $K$ is arbitrary, these properties then hold true for all $k\in\mathbb N$.
\end{proof}

The following additional types of asymptotic expansions arise in the proofs below.

\begin{defn}\label{refinex2}
Let  $\mathcal Z=(Z_k)_{k=0}^\infty$ be as in Definition \ref{uexp} and $(v_n)_{n=1}^\infty$ be a sequence in $Z_0$. 
 We say $(v_n)_{n=1}^\infty$ has a \emph{pre-unitary expansion}, respectively, \emph{pre-degenerate expansion} if 
it satisfies \ref{uni}, respectively, \ref{degenx} with the condition $ \|w_k\|_{Z_k}=1$ in \eqref{wkone}, respectively, \eqref{wktwo}, being  replaced with $w_k\ne 0$.
\end{defn}

\begin{proof}[Proof of Theorem \ref{refinethm}]
 The proof is divided into three steps.
 
\medskip   \emph{Step 1: The procedure to remove one zero term.} Denote $w_0=v$, $\Gamma_{0,n}=1$. Assume there is an integer $s\ge 2$  such that 
   $w_{s-1}=0$ and $w_s\ne 0$. The following shows that we can remove the term $\Gamma_{s-1,n}w_{s-1}$.
   
We have $\Gamma_{s,n}\to0$ when $s=2$,  and $\Gamma_{s,n}/\Gamma_{s-2,n}\to 0$ as $n\to \infty$ when $s>2$.
Then 
$$\sum_{k=0}^{s}\Gamma_{k,n}w_k
=\sum_{k=0}^{s-2}\Gamma_{k,n}w_k+\Gamma_{s,n}  w_s,$$
$$v_n=\sum_{k=0}^{s-1}\Gamma_{k,n}w_k+\Gamma_{s,n}  w_n^{(s)}
=\sum_{k=0}^{s-2}\Gamma_{k,n}w_k+\Gamma_{s,n}  w_n^{(s)},$$
and, in the case $w_{m}$ exists with $m\ge s+1$,
$$v_n=\sum_{k=0}^{m-1}\Gamma_{k,n}w_k+\Gamma_{m,n}  w_n^{(m)}
=\sum_{k=0}^{s-2}\Gamma_{k,n}w_k +\sum_{k=s}^{m-1}\Gamma_{k,n}w_k+\Gamma_{m,n}  w_n^{(m)}.$$
Moreover,
$$\Gamma_{s,n}w_n^{(s)}=v_n-\sum_{k=0}^{s-1}\Gamma_{k,n}w_k
=v_n-\sum_{k=0}^{s-2}\Gamma_{k,n}w_k=\Gamma_{s-1,n}w_n^{(s-1)}.$$
Thus,
$$w_n^{(s)}=\frac{\Gamma_{s-1,n}}{\Gamma_{s,n}}w_n^{(s-1)}\in Z_{s-2}.$$

Let $\tilde{\mathcal Z}$ be $\mathcal Z$ removing $Z_{s-1}$, then the expression \eqref{vsum} without the term $\Gamma_{s-1,n}w_{s-1}$ is a relaxed expansion of $(v_n)_{n=1}^\infty$ in $\tilde{\mathcal Z}$. 

\medskip    \emph{Step 2: Removing necessary zero terms.} Consider three cases in Definition \ref{uexp} for the relaxed expansion \eqref{vsum}.

\medskip\noindent
    \textit{Case \ref{d1}.} Then, obviously, $v_n=v$ is the  trivial unitary expansion in $\tilde{\mathcal Z}=\mathcal Z$.
    
\medskip\noindent
    \textit{Case \ref{d2}.} In this case, we have from \eqref{wKcond} that $w_K\ne 0$.     
    If needed, we can utilize the procedure in Step 1 finitely many times to remove all zero terms $\Gamma_{k,n}w_k$, starting from $k=K-1$ down to $k=1$. The final sequence $\tilde {\mathcal Z}$ is $\mathcal Z$ removing all $Z_k$ for $1\le k\le K$ with $w_k=0$ in \eqref{vsum}. 
The result is a nontrivial, finite relaxed expansion in $\tilde {\mathcal Z}$ with all nonzero $w_k$, that is, a finite pre-unitary expansion in $\tilde{\mathcal Z}$. The new integer $K$ is denoted by $\tilde K$.

\medskip\noindent
    \textit{Case \ref{d3}.} We consider three scenarios.

    \medskip
    \textit{Scenario IIIa:   $w_k=0$ for all $k\ge 1$.} Then $(v_n)_{n=1}^\infty$ has a degenerate expansion in $\tilde {\mathcal Z}={\mathcal Z}$ according to Definition \ref{refinex}\ref{degenx}, case (a).

    \medskip
    \textit{Scenario IIIb: there is an  integer $N\ge 1$ such that $w_k=0$ for all $k>N$ and $w_N\ne 0$.} 
    Note that such an integer $N$ is unique.
    Using the procedure in Step 1, we remove finitely many zero terms $\Gamma_{k,n}w_k$ for $1\le k< N$.
    The sequence $\tilde {\mathcal Z}$ is $\mathcal Z$ removing all $Z_k$ for $1\le k< K$ with $w_k=0$ in \eqref{vsum}.
    We obtain an infinite relaxed expansion in $\tilde {\mathcal Z}$ with  $w_k\ne 0$ for all $1\le k\le \tilde N$ and  $w_k=0$ for all $k>\tilde N$, where $\tilde N$ is a positive integer. Thus, we obtain a pre-degenerate expansion in $\tilde{\mathcal Z}$.

    \medskip
    \textit{Scenario IIIc: The sequence $(w_k)_{k=1}^\infty$ is not eventually zero.} Suppose there is a subsequence $(w_{k_j})_{j=1}^\infty$ of $(w_k)_{k=1}^\infty$ such that $w_{k_j}\ne 0$ for all $j\ge 1$.
    Applying the procedure in Step 1, we remove finitely many zero terms between $v$ and $\Gamma_{k_1,n}w_{k_1}$, and for each $j\ge 1$, remove finitely many zero terms between 
    $\Gamma_{k_j,n}w_{k_j}$ and $\Gamma_{k_{j+1},n}w_{k_{j+1}}$.
    It results in a sequence $\tilde {\mathcal Z}$ which is $\mathcal Z$ removing all $Z_k$ for $k\ge 1$ with $w_k=0$ in \eqref{vsum}, and an infinite relaxed expansion in $\tilde{\mathcal Z}$ with $w_k\ne 0$ for all $k\ge 1$. That is, we obtain an infinite pre-unitary expansion in $\tilde{\mathcal Z}$.

\medskip    \emph{Step 3: Normalizing nonzero terms.} 
 It remains to treat Case \ref{d2} and Scenarios IIIb and IIIc in Step 2 above.
We will convert the pre-unitary/pre-degenerate expansions to unitary/degenerate expansions.

\medskip\noindent
    \textit{Case \ref{d2}.} We normalize by defining, for $1\le k\le \tilde K$,
\begin{equation}\label{normalize}
    \hat w_k=w_k/\|w_k\|_{Z_k}, \ 
\hat \Gamma_{k,n}=\Gamma_{k,n}\|w_k\|_{Z_k},\ 
\hat w_n^{(k)}=w_n^{(k)}/\|w_k\|_{Z_k}.
\end{equation}
Then one has 
\begin{equation}
    \label{unchange}
\|\hat w_k\|_{Z_k}=1, \ 
\hat \Gamma_{k,n}\hat w_k= \Gamma_{k,n} w_k, \ 
\hat \Gamma_{k,n}\hat w_n^{(k)}=\Gamma_{k,n}w_n^{(k)}.
\end{equation}
Hence,
$$v_n=v+\sum_{j=1}^{k-1}  \Gamma_{k,n} w_k+\Gamma_{k,n}w_n^{(k)}
=v+\sum_{j=1}^{k-1} \hat \Gamma_{k,n}\hat  w_k+\hat \Gamma_{k,n}\hat w_n^{(k)}.$$
We also have the following limit in $Z_k$
$$\lim_{n\to\infty }\hat w_n^{(k)}=\frac{w_k}{\|w_k\|_{Z_k}}=\hat w_k.$$
Therefore, we obtain a nontrivial, finite unitary expansion 
$$v_n\approx v+\sum_{k=1}^{\tilde K} \hat \Gamma_{k,n}\hat  w_k \text{ in }\tilde{\mathcal Z}.$$

\medskip\noindent
    \textit{Scenario IIIb.} We normalize by \eqref{normalize} for $1\le k \le \tilde N$, and by
    \begin{equation*}
    \hat w_k=w_k=0, \ 
\hat \Gamma_{k,n}=\Gamma_{k,n},\ 
\hat w_n^{(k)}=w_n^{(k)} \text{ for } k>\tilde N.
\end{equation*}
    Then we obtain a degenerate expansion 
    \begin{equation}\label{vhatsum}
        v_n\approx v+\sum_{k=1}^\infty \hat \Gamma_{k,n}\hat  w_k \text{ in }\tilde{\mathcal Z},
    \end{equation} 
according to Definition \ref{refinex}\ref{degenx}, case (b).

\medskip\noindent
    \textit{Scenario IIIc.} We normalize by \eqref{normalize} for all $k\ge 1$. Then we obtain an infinite unitary expansion in $\tilde {\mathcal Z}$ of the same form as \eqref{vhatsum}.
\end{proof}

\begin{proof}[Proof of Corollary \ref{maincor}]
Let $(v_n)_{n=1}^\infty$ be a sequence in $Z_0$ that converges to $v\in Z_0$.
By the virtue of Theorem \ref{mainlem}, there is a subsequence, still denoted by $(v_n)_{n=1}^\infty$, such that it has a strict expansion  \eqref{vsum} in $\mathcal Z$.

\ref{caseq} By property \eqref{srex} and Theorem \ref{refinethm}, there exists a subsequence $\tilde {\mathcal Z}$ of $\mathcal Z$ such that $(v_n)_{n=1}^\infty$ has a unitary or degenerate expansion 
\begin{equation}\label{xNtil}
    v_n\approx v+\sum_{k\in\tilde{ \mathcal N}}\Gamma_{k,n}w_k \text{ in }\tilde {\mathcal Z}.
\end{equation}

\ref{casingle} Set 
$    \hat Z_k=Z_\infty$ for all $k\ge 1$, and $\hat{\mathcal Z}=(\hat Z_k)_{k=0}^\infty= (Z_\infty)_{k=0}^\infty$.
Then  \eqref{xNtil} is also a corresponding pre-unitary expansion or pre-degenerate expansion of $(v_n)_{n=1}^\infty$ in $\hat{\mathcal Z}$.
Applying the normalizing process in Step 3 in the proof of Theorem \ref{refinethm} to this expansion in $\hat{\mathcal Z}$, we obtain  the following  corresponding unitary or degenerate expansion 
\begin{equation}\label{xZZ}
    v_n\approx v+\sum_{k\in \tilde{\mathcal N}}\hat\Gamma_{k,n}\hat w_k  \text{ in }\hat{\mathcal Z}, 
\end{equation}
which, in fact, is a unitary or degenerate expansion in $Z_\infty$.
\end{proof}

\begin{obs}\label{degensing}
Let the subsequence $(v_n)_{n=1}^\infty$ in the proof of Corollary \ref{maincor} have two degenerate expansions \eqref{xNtil} and \eqref{xZZ}. 
Thanks to the second identity in \eqref{unchange}, the remainder $R_{N,n}$, see \eqref{Rmn}, is the same for both  \eqref{xNtil} and \eqref{xZZ}. 
Then it follows as in \eqref{denremest} that 
$$\lim_{n\to\infty} \frac{\|R_{N,n}\|_{Z_\infty}}{\Gamma_{k,n}}=0\text{ for any }k>N.$$      
\end{obs}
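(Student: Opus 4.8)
The plan is to reduce everything to the single‑space remainder estimate \eqref{denremest}, applied to the degenerate expansion \eqref{xZZ}, which already lives entirely in $Z_\infty$. Since the limit to be proved is measured in $\|\cdot\|_{Z_\infty}$, this is the natural expansion to exploit; the remaining work is purely bookkeeping, namely matching the two expansions \eqref{xNtil} and \eqref{xZZ} at the level of the cutoff index $N$, of the $N$‑th remainder, and of the coefficients $\Gamma_{k,n}$ for the high indices $k>N$ appearing in the statement.

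First I would recall that \eqref{xZZ} is produced from \eqref{xNtil} by running the normalizing process of Step~3 in the proof of Theorem~\ref{refinethm} inside $\hat{\mathcal Z}=(Z_\infty)_{k=0}^\infty$. In the degenerate case that process rescales only the finitely many nonzero terms (there being $N$ of them, with the convention $N=0$ when every term vanishes, as in Remark~\ref{degenrmk}) and leaves the tail of zero terms untouched. Consequently both expansions are degenerate with the \emph{same} integer $N\ge 0$, and for every $k>N$ the normalization leaves these terms unchanged, so that $\hat w_k=w_k=0$ and $\hat\Gamma_{k,n}=\Gamma_{k,n}$.

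Next I would check that the two $N$‑th remainders coincide as vectors. The second identity in \eqref{unchange} reads $\hat\Gamma_{k,n}\hat w_k=\Gamma_{k,n}w_k$, which holds for $1\le k\le N$ by the rescaling \eqref{normalize} and trivially for $k=0$ and for $k>N$. Hence the two partial sums $\sum_{k=0}^{N}\Gamma_{k,n}w_k$ and $\sum_{k=0}^{N}\hat\Gamma_{k,n}\hat w_k$ agree, so by the definition \eqref{Rmn} the remainder $R_{N,n}$ is the same vector whether read off from \eqref{xNtil} or from \eqref{xZZ}; this is precisely the assertion quoted in the remark.

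Finally I would apply \eqref{denremest} to the degenerate expansion \eqref{xZZ} in the single space $Z_\infty$. Fixing $k>N$ and taking $m+1=k$ (so that $m\ge N$), that estimate, together with the convergence \eqref{b3} and $\hat w_k=0$, gives
$$\frac{\|R_{N,n}\|_{Z_\infty}}{\hat\Gamma_{k,n}}=\|\hat w_n^{(k)}\|_{Z_\infty}\longrightarrow\|\hat w_k\|_{Z_\infty}=0\quad\text{as }n\to\infty,$$
and replacing $\hat\Gamma_{k,n}$ by $\Gamma_{k,n}$, legitimate since $k>N$, yields the claim. I do not expect a genuine obstacle here; the only point requiring care is the opening consistency check, that performing the normalization with the ambient norms $\|w_k\|_{Z_\infty}$ (rather than the $\tilde Z_k$‑norms underlying \eqref{xNtil}) still preserves the cutoff $N$ and the identities in \eqref{unchange}, so that both the remainder and the high‑index coefficients transfer unchanged to $Z_\infty$.
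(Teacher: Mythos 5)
Your proposal is correct and follows essentially the same route as the paper: the remark's own justification rests on the second identity of \eqref{unchange} to identify the two remainders $R_{N,n}$ and then on the computation \eqref{denremest} applied to the expansion \eqref{xZZ} in $\hat{\mathcal Z}=(Z_\infty)_{k=0}^\infty$, which is exactly what you do. Your added consistency checks (that the cutoff $N$ is preserved by the normalization and that $\hat\Gamma_{k,n}=\Gamma_{k,n}$ for $k>N$ by the Scenario IIIb rule) are the right details to make the sketch rigorous.
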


\medskip
\noindent\textbf{Data availability statement.} 
No new data were created or analysed in this study.

\medskip
\noindent\textbf{Acknowledgment.}
MSJ was supported in part by Simons Foundation Grant MP-TSM-00002337. 

\medskip
\noindent\textbf{Conflict of interest.}
There are no conflicts of interests.

\bibliographystyle{abbrv}
\bibliography{HJ1cite.bib}

\end{document}